\def\ps@pprintTitle{%
 \let\@oddhead\@empty
 \let\@evenhead\@empty
 \def\@oddfoot{}%
 \let\@evenfoot\@oddfoot}
\renewcommand\nomgroup[1]{%
  \item[\bfseries
  \ifstrequal{#1}{L}{Latin symbols}{%
  \ifstrequal{#1}{G}{Greek symbols}{%
  \ifstrequal{#1}{O}{Other}{}}}%
]}
\numberwithin{equation}{section}
\newtheorem{thm}{THEOREM}[section]
\newtheorem{lem}[thm]{LEMMA}
\newtheorem{prop}[thm]{PROPOSITION}
\newtheorem*{definition}{{\bf DEFINITION.}}
\newtheorem{remark}{{\bf Remark}}
\newenvironment{proof}{\paragraph{\bf Proof}}{\hfill$\square$\newline}
\begin{document}
\begin{frontmatter}
\title{Energy stable finite element scheme for simulating flow dynamics of droplets on non--homogeneous surfaces}
\author[1]{Filip Ivan\v ci\' c}
\ead{fivanci@nhri.edu.tw}

\author[1,2]{Maxim Solovchuk\corref{cor1}}
\ead{solovchuk@gmail.com}
\cortext[cor1]{Corresponding author}

\address[1]{Institute of Biomedical Engineering and Nanomedicine, National Health Research Institutes, No. 35, Keyan Road, Zhunan 35053, Taiwan}
\address[2]{Department of Engineering Science and Ocean Engineering, National Taiwan University, No. 1, Sec. 4, Roosevelt Road, Taipei 10617, Taiwan}

\begin{abstract}
An energy stable finite element scheme within {\it arbitrary Lagrangian Eulerian} (ALE) framework is derived for simulating the dynamics of millimetric droplets in contact with solid surfaces. Supporting surfaces considered may exhibit non--homogeneous properties which are incorporated into system through {\it generalized Navier boundary conditions} (GNBC). Numerical scheme is constructed such that the counterpart of (continuous) energy balance holds on the discrete level. This ensures that no spurious energy is introduced into the discrete system, i.e. the discrete formulation is stable in the energy norm. The newly proposed scheme is numerically validated to confirm the theoretical predictions. Of a particular interest is the case of droplet on a non--homogeneous inclined surface. This case shows the capabilities of the scheme to capture the complex droplet dynamics (sliding and rolling) while maintaining stability during the long time simulation.
\end{abstract}

\begin{keyword}
arbitrary Lagrangian--Eulerian\sep dynamic contact line\sep energy stability\sep discrete energy balance\sep millimetric droplets
\end{keyword}

\end{frontmatter}

\section{Introduction}
When a liquid is geometrically constrained to a small scale, surface forces become comparable to volume forces and greatly influence the dynamics of such flows. For example, the dynamics of a small droplet in contact with a rigid surface is governed by surface tension, wetting and gravity forces. Hydrophobic and hydrophilic surfaces are often seen in nature and lead to many interesting phenomena such as self--cleaning properties (e.g., see \cite{blossey03} and references therein). A good comprehensive review on microfluidics and applications in technical fields may be found in \cite{venkatesan20,malinowski20}. 

The aim of this paper is to derive an energy stable {\it finite element method} (FEM) for simulating the dynamics of small droplets on solid, possibly non--homogeneous and inclined surfaces. The main challenges in the numerical framework for such problems come from the description of the evolving geometry. Surface tension, which plays an essential role in the mathematical model for droplet dynamics, is a function of the mean curvature which depends on the geometry of fluid--fluid interface. A recent review on the numerical models for surface tension is given in \cite{popinet2018}. Two essentially different approaches are widely used in the literature to describe the moving interface: implicit and explicit. In the implicit approach, a time--dependent scalar field is introduced on a fixed computational mesh to track the evolution of the interface. This is often referred to as the {\it interface capturing} approach and it is in a tight relationship with mesh adaptation methods. Indeed, mesh has to be sufficiently dense in the neighborhood of the interface in order for the position of the interface to be credibly captured on the discrete level. {\it Level set method} (see \cite{sethian03} and references therein) and {\it volume of fluid method} (see \cite{hirt81} and references therein) are representatives of the interface capturing approach. In {\it interface tracking} approach, the interface is described explicitly with an aligned mesh, i.e. the mesh {\it fits} the interface. Within this framework, if the interface moves, the mesh must also be moved with it accordingly. {\it Lagrangian} and {\it arbitrary Lagrangian Eulerian} (ALE) methods fall within the latter approach. Detailed description of the ALE methods can be found in \cite{donea04} while a concise review is given in Section~\ref{sec:ALE} in this paper. Interface tracking methods are in general favored over interface capturing methods when a precise position of the interface is required. Their main disadvantage is the non--ability to capture possible topological changes in the domain.

A great challenge within ALE FEM is to ensure energy stability on the discrete level. If a scheme is stable in the energy norm, then there is no spurious energy brought into the system due to discretization. Such schemes are relatively easy to construct for the problems on fixed meshes, but significantly more challenging to derive on moving meshes. Energy stable schemes are often studied in context of {\it fluid--structure interaction} (FSI) problems modeled within ALE framework (see, e.g., \cite{lozovskiy15,hecht17,formaggia09} and references therein). For FSI problems, energy balance of ALE FEM scheme is usually based on two main ingredients (within {\it monolithic} approach): {\it space conservation law} (SCL) and proper coupling of fluid and structure (\cite{formaggia09}). In case discrete SCL is not satisfied, artificial energy sinks and/or sources appear in the numerical scheme, and spurious energy is brought into the system. More generally, SCL is closely intertwined with the stability of moving mesh methods -- it roots from the Reynolds transport theorem when transiting from the continuous to the discrete level. More details on SCL and stability issues may be found in \cite{formaggia99,ivancic19} and references therein. In context of simulating the free surface flows within ALE framework, SCL plays the same role for numerical stability as in the case of FSI. For the case of the free surface flows, however, sources of spurious energies are also known to appear on fluid--fluid interface, fluid--solid interface, and triple contact line. 

Energy stability and construction of energy stable schemes for a special case of free surface flows was already studied in \cite{gerbeau09}. There, they studied a simplified case where mesh velocity has only the vertical component. Such cases appear, for example, when studying a container filled with water and assuming there is no wave breaking. They derived a simplified energy balance of the governing system (ignoring the wetting energy) and investigated the energy stability of few numerical schemes. Their energy stability is based on satisfying the discrete SCL and its generalization for curved two--dimensional surfaces embedded in three--dimensional space. Essentially, they used a generalization of Reynolds transport theorem for manifolds of co--dimension one to derive the corresponding generalization of SCL. Furthermore, a scheme was proposed which is energy stable under the assumption of sufficiently small time step. They performed numerical validation of the theoretical results for the two--dimensional scenario and investigated implicit versus explicit treatments of interface displacement. It was shown that the explicit treatment of the interface displacement may introduce spurious energy into the discrete system when gravity and surface tension are taken into the account. In term, such spurious energy may cause a scheme blowup.

In this work an energy stable ALE FEM scheme for general free surface flows  is derived. Our focus is on liquid droplets in contact with solid surfaces, where wetting phenomenon may be observed. The framework we develop is quite general and may be applied to various setups, such as non--homogeneous or inclined supporting surfaces. By non--homogeneous surface, a surface whose physical and/or chemical properties may vary in space and in time is implied. For such surfaces, droplets in contact exhibit different wetting properties in different configurations. Hence, their behavior may be potentially controlled by surface properties (see, e.g.,  \cite{sadullah2020,mazaltarim2021}). Indeed, we investigate a couple of interesting examples in Section~\ref{sec:numerical-validation}, including droplets on non--homogeneous inclined surfaces exhibiting both sliding and rolling kinematics. Mathematical model describing the kinematics of droplets on solid surfaces consists of Navier--Stokes equations governing the fluid flow, coupled with equations for the mesh velocity governing the geometry evolution. Fluid equations have to be subjected to boundary conditions which credibly describe the situation on the interface. This proved to be quite challenging for viscous flows in contact with both solid and gaseous phases. Classical {\it no--slip} boundary conditions cannot capture the physics of fluid near the triple contact line (solid--gas--fluid interface). It has been observed that the contact line is able to move despite the apparent {\it no--slip} of the fluid at the wall. So called {\it generalized Navier boundary conditions} (GNBC) derived from the {\it molecular dynamics} (MD) theory for the Navier--Stokes equations have been proven to capture the dynamics of the triple contact line adequately (see \cite{qian03,qian06,qian06var,ren07}). GNBC have already been successfully applied for simulating free surface flows with ALE FEM for various multiphysics phenomena such as free surface flows in \cite{gerbeau09}, sliding/rolling droplets kinematics in \cite{ganesan09,willassen14}, droplets kinematics on non--isothermal surfaces in \cite{venkatesan16}, chemotaxis phenomena in \cite{ivancic19-chemo,ivancic20-chemo}, to name a few. The main goal of this paper is the derivation of the energy stable numerical scheme based on the mathematical model incorporating GNBC and not the investigation of GNBC themselves. However, we do perform some numerical validation of GNBC within the ALE FEM since they weren't originally developed within this framework.

The main ingredients for the energy stability of the proposed scheme are discrete space conservation laws for manifolds of co--dimension zero (traditional SCL, see \cite{formaggia99,ivancic19}) and of co--dimension one (generalized SCL introduced in \cite{gerbeau09}). Furthermore, an accurate evaluation of the curvature of fluid--fluid interface is essential to avoid birth of spurious velocities. This is particularly challenging in context of polygonal meshes since curvature is, roughly, a function of second derivatives of the interface. Some insights on this manner are given in \cite{ivancic20,ganesan07,cenanovic20}. The newly proposed energy stable scheme is demonstrated for a couple of different scenarios. Particular emphasis is on the three dimensional case. This is important since certain physics simply cannot be captured in two dimensional case. For example, non--homogeneous surfaces only become interesting in three dimensions. A full three dimensional model is significantly more challenging than its two--dimensional counterpart. Apart from the obvious increase in computational cost, challenges also arise due to curvature reconstruction. Reconstructing the curvature from discrete data is much simpler in the two--dimensional setup, while in three--dimensions it may exhibit instabilities even for fairly simple geometries (see \cite{cenanovic20}). Curvature plays an essential role in capillary flows.

This paper is organized as follows. In Section~\ref{sec:math-model}, mathematical model governing the flow kinematics of a droplet on solid surface is reviewed. In Section~\ref{sec:ALE}, the arbitrary Lagrangian Eulerian framework is reviewed and the notation is introduced. Some of the main challenges of ALE framework are highlighted. In Section~\ref{sec:varf}, variational formulation of the governing system and total energy balance are derived. In Section~\ref{sec:energy-stable-scheme}, a novel energy stable ALE FEM formulation is proposed. Theoretical estimates for the total discrete energy are presented. In Section~\ref{sec:numerical-validation}, numerical validation of theoretical results is performed on a few practical examples. Finally, in Section~\ref{sec:conclusion}, a short summary of this work is presented and the conclusions are drawn. 

\section{Mathematical modeling}\label{sec:math-model}
In this section we concisely describe the mathematical model governing the kinematics of small droplets on solid surfaces in a gravity field. By small, we mean that the length scale is close to the {\it capillary length} so surface tension and gravity forces are comparable. The mathematical model for free surface flows reviewed below has already been successfully applied in \cite{gerbeau09,ganesan09,willassen14}. We start with the mathematical description of geometry. Nomenclature is summarized in \ref{apx:nomenclature}.

\subsection{Geometry description}
Let $\Omega\subset\bbmR^d$, $d=2,3$, denote the domain occupied by the fluid at time $t\geq 0$. Note that the domain is a function of time, $\Omega=\Omega(t)$. Furthermore, assume that $\partial\Omega$ is a disjoint union of $\Gamma$ and $\Sigma$, $\partial\Omega=\Gamma\cup\Sigma$, where  $\Sigma$ and $\Gamma$ have strictly positive measures, $\vert\Gamma\vert>0$ and $\vert\Sigma\vert>0$. In what follows, we denote the gas--liquid interface by $\Sigma$ and the solid--liquid interface by $\Gamma$. In this context, the triple contact line, i.e. the solid--liquid--gas interface, is denoted by $\eta$, $\eta=\Gamma\cap\Sigma$. Note that, in aforementioned notation, $\eta=\partial\Sigma=\partial\Gamma$. The initial setup and the geometry notation are illustrated in Figure~\ref{fig:setup}.

\begin{figure}[h]
\centering
\subfloat[]{\includegraphics[width=0.4\linewidth]{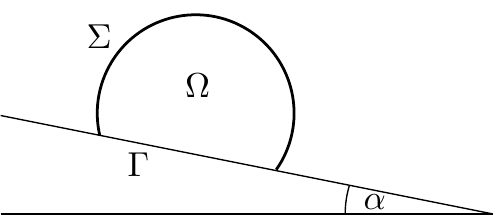}\label{fig:setup-2d}}\hspace{0.1\linewidth}\subfloat[]{\includegraphics[width=0.4\linewidth]{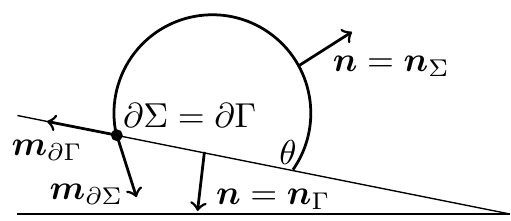}\label{fig:2dnormals}}

\vspace{0.01\linewidth}

\subfloat[]{\includegraphics[width=0.4\linewidth]{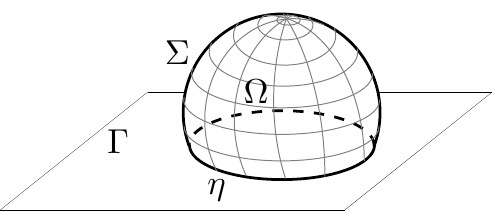}\label{fig:setup-3d}}\hspace{0.1\linewidth}\subfloat[]{\includegraphics[width=0.4\linewidth]{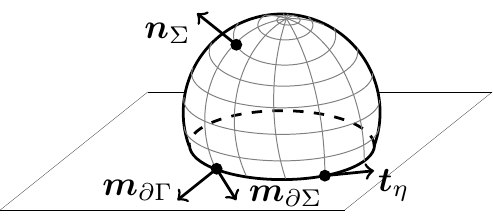}\label{fig:3dnormals}}

\caption{Illustration of the initial setup and the notation in two and three dimensions. }
\label{fig:setup}
\end{figure}

Let us denote by $\bfn$ the outer unit normal to the boundary $\partial\Omega$. When more detail information is needed, a subscript $\Sigma$ or $\Gamma$ is added which specifies the outer unit normal to $\Sigma\subset\partial\Omega$ or $\Gamma\subset\partial\Omega$, respectively. Furthermore, define the unit tangent vector to $\eta$ by $\bft_\eta=\bfn_\Sigma\times\bfn_\Gamma$. Within present notation, $\bft_\eta=\bft_{\partial\Gamma}=\bft_{\partial\Sigma}$. Then, the unit co--normal vectors to $\partial\Sigma$ and $\partial\Gamma$ are given by
\[
\bfm_{\partial\Sigma} = \bft_\eta\times\bfn_\Sigma\textrm{ and } \bfm_{\partial\Gamma} = \bft_\eta\times\bfn_\Gamma,
\]
and the contact angle $\theta$ between $\Gamma$ and $\Sigma$ is defined via the relation
\begin{equation}\label{eq:dyn_cont_angle_def}
\cos\theta = \bfm_{\partial\Sigma}\cdot\bfm_{\partial\Gamma}.
\end{equation}
Triples $(\bft_\eta,\bfn_\Sigma,\bfm_{\partial\Sigma})$ and $(\bft_\eta,\bfm_{\partial\Gamma},\bfn_\Gamma)$ define positively oriented orthonormal basis (on \(\eta\)). See Figure~\ref{fig:setup} for illustration.

By $\bfx_\Sigma$ we denote the inclusion from $\Sigma$ to $\bbmR^d$,
\[
\bfx_\Sigma\colon\Sigma\to\bbmR^d\;,\:\Sigma\ni\bfx\mapsto\bfx\in\bbmR^d,
\]
and by $\sT_{\bfx}\Sigma$ the tangential plane to $\Sigma$ at $\bfx\in\Sigma$. The tangent bundle of $\Sigma$ is denoted by $\bfT\Sigma$ and the orthogonal projection to $\bfT\Sigma$, $\PS$, is defined as
\[
\PS = \bbmI - \bfn\otimes\bfn,
\]
where $\bbmI$ is the identity ($d\times d$) matrix and $\otimes$ denotes the standard tensor product between vectors. For a scalar field $\phi\colon\Sigma\subset\bbmR^d\to\bbmR$, tangential (to $\Sigma$) gradient and divergence are then defined as
\[
\nabla_\Sigma\phi = P_\Sigma\nabla\phi\textrm{ and }\div_\Sigma\phi = \trace\nabla_\Sigma\phi.
\]
Corresponding maps and operators on $\Gamma$ are defined analogously. Laplace--Beltrami operator is then defined as $\Delta_\Sigma = \div_\Sigma\nabla_\Sigma$, and mean curvature vector $\bfh$ may be expressed as
\begin{equation}\label{eq:curvature}
\bfh\colon\Sigma\to\bbmR^d\;,\: \bfh=-\Delta_\Sigma\bfx_\Sigma.
\end{equation}
Mean curvature $H$ is then given by the identity $\bfh = 2H\bfn$. If by $\kappa_1$ and $\kappa_2$ we denote the principal curvatures of $\Sigma$, then
\[
H(\bfx_\Sigma) = \frac{1}{2}\Big( \kappa_1(\bfx_\Sigma) + \kappa_2(\bfx_\Sigma) \Big)\;,\:\bfx_\Sigma\in\Sigma.
\]

The coordinate system is chosen such that $\Gamma$ lies in $xy$--plane (or $x$--axis for the 2D case). Hence, direction opposite to direction of the gravity, $\bfk$, may be defined as
\begin{equation}\label{eq:inclination-k}
\bfk = [-\sin\alpha,\cos\alpha]^T\textrm{ if }d=2\textrm{ and }\bfk = [-\sin\alpha,0,\cos\alpha]^T\textrm{ if }d=3,
\end{equation}
where $\alpha$ is the inclination angle of the supporting plane $\Gamma$. This choice of coordinate system significantly simplifies the implementation of Dirichlet boundary conditions on $\Gamma$ for the velocity field (see Remark~\ref{rem:dirichlet-bc-impl}). 

\subsection{Governing equations}
The mathematical model governing the kinematics of a droplet in contact with a rigid surface consists of incompressible Navier--Stokes equations subjected to cautiously chosen boundary conditions. The main problem are the boundary conditions which describe the moving triple contact line $\eta$. Within ALE framework, the phenomenon of moving contact line is credibly captured by imposing the {\it generalized Navier boundary conditions} (GNBC).

Let us denote the fluid velocity by $\bfv$ and the pressure by $p$. Furthermore, $\varrho$ denotes the fluid density, $\mu$ the dynamic viscosity, $g$ the magnitude of the gravity acceleration, $\gamma$ the surface tension, $\varsigma$ the friction slip coefficient and $\theta_s$ the static contact angle. Some details on the static contact angle are given in Remark~\ref{rem:theta}. The governing system of equations in dimensional form then reads: in $\Omega(t)$, for $t>0$,
\begin{equation}\label{sys:NSdim}
\begin{split}
\varrho\left( \frac{\partial}{\partial t} \bfv + [\bfv\cdot\nabla]\bfv \right) - \div\bfsigma &= -\varrho g\bfk,\\
\div\bfv &= 0,
\end{split}
\end{equation} 
where
\begin{equation}
\begin{split}
\bfsigma &= -p\bbmI + \mu\bbmD(\bfv)\textrm{, with }\bbmD(\bfv)=[\nabla\bfv]^T+[\nabla\bfv].
\end{split}
\end{equation}
Note that Cauchy stress tensor $\bfsigma$ characterizes a Newtonian fluid. System (\ref{sys:NSdim}) is subjected to the following boundary conditions: for $t>0$
\begin{equation}\label{sys:NSdim-bc}
\begin{split}
\bfsigma\bfn &= \gamma\Delta_\Sigma\bfx_\Sigma\textrm{ on }\Sigma(t),\\
\bfv\cdot\bfn &= 0 \textrm{ on }\Gamma(t),\\
\bfsigma\bfn\cdot\bftau &= - \varsigma(\bfv-\bfu)\cdot\bftau - \gamma(\cos\theta - \cos\theta_s)\bfm_{\partial\Gamma}\cdot\bftau \delta_\eta \textrm{ on }\Gamma(t),
\end{split}
\end{equation}
where $\bftau\in \bfT\Gamma$ is arbitrary, $\delta_\eta$ is {\it Dirac's $\delta$--distribution} which localizes the last term in (\ref{sys:NSdim-bc})$_3$ to curve $\eta$, and $\bfu$ is the velocity of $\Gamma$ ($\bfu=0$ for fixed $\Gamma$). Note, (\ref{sys:NSdim-bc})$_1$ may be rewritten in terms of curvature using identity (\ref{eq:curvature}): on $\Sigma$
\[
\bfsigma\bfn = \gamma\Delta_\Sigma\bfx_\Sigma = -\gamma 2H\bfn.
\]
\begin{remark}
We assume that surface tension $\gamma$ is constant at all times. This is a reasonable and obvious assumption since we only study the isothermal case. In practice, $\gamma$ may be a function of temperature and/or surfactants (surface active agents). The derivation of energy stable schemes of such more general cases may be studied in future.
\end{remark}
\begin{remark}
Throughout the rest of the paper we write the gravity forcing term as the gradient of gravity potential $\Phi=\Phi(\bfx)$ where
\begin{equation}
\Phi(\bfx)=\varrho g(\bfk\cdot\bfx).
\end{equation}
Then, $\varrho g \bfk = \nabla\Phi(\bfx)$. 

All analyses performed in this paper may be applied to more general cases with arbitrary conservative forces.
\end{remark}
\begin{remark}\label{rem:theta}
Static contact angle $\theta_s$ and friction slip coefficient $\varsigma$ typically depend on the properties of supporting surface $\Gamma$ and the fluid of interest. Properties of the supporting solid surfaces may be controlled by, e.g., applying {\it hydrophobic coatings} (see \citep{nakajima11} and references therein) or by alternating their micro--structures (see \cite{yin14} and references therein).

Throughout this paper, we assume that $\theta_s$ and $\varsigma$ are given functions of $\Gamma$, i.e. for $\bfx_\Gamma\in\Gamma$
\[
\theta_s=\theta_s(\bfx_\Gamma)\;,\:\varsigma=\varsigma(\bfx_\Gamma).
\]
Varying $\theta_s$ and $\varsigma$ in space corresponds to non--homogeneous properties of the surface $\Gamma$.
\end{remark}

\begin{remark}\label{rem:dirichlet-bc-impl}
Taking into account the choice of coordinate system, Dirichlet boundary condition (\ref{sys:NSdim-bc})$_2$ states that the vertical component of velocity equals to zero. This is significantly simpler to implement than general condition $\bfv\cdot\bfn=0$ which requires an involvement of Lagrange multipliers or Nietsche's method (see, e.g., \cite{stenberg95}).
\end{remark}

\subsection{Dimensionless system}
In order to perform numerical simulations, we first derive the dimensionless form of system (\ref{sys:NSdim})--(\ref{sys:NSdim-bc}). It is also more convenient, from the practical point of view, to work with dimensionless equations during the analysis of the system. 

Let us for a moment emphasize the dimensionless quantities with the {\it overbar} symbol. Then, physical variables are non--dimensionalized as follows
\[
\overline{\bfx} = \frac{\bfx}{L}\;,\: \overline{t} = \frac{t}{t_c}\;,\: \overline{\bfv}=\frac{\bfv}{U},\:\overline{p}=\frac{p}{p_c}\;,
\]
where $p_c$ denotes the characteristic pressure, $t_c$ the characteristic time, $U$ the characteristic velocity and $L$ the characteristic length. Characteristic pressure and time may be defined in terms of characteristic length and velocity as
\[
p_c=\varrho U^2 \textrm{ and } t_c=\frac{L}{U}\;.
\]
Characteristic length and velocity are to be defined later; their choice depends on the problem considered. Dimensional analysis then gives rise to the following dimensionless numbers
\[
\noFr=\frac{U}{\sqrt{gL}}\;,\:\noRe=\frac{\varrho U L}{\mu}\;,\:\noCa=\frac{\mu U}{\gamma}\;,\:\overline{\varsigma} = \frac{\varsigma}{\varrho U}\;,
\]
where $\noFr$ is the Froude number, $\noRe$ is the Reynolds number and $\noCa$ is the capillary number. Furthermore, the dimensionless friction slip coefficient and the static contact angle may be rewritten as functions of $\overline{\bfx_\Gamma}=\bfx_\Gamma/L$ if the functional dependence on $\bfx_\Gamma$ is known. Then, slightly abusing the notation by dropping the {\it overbar} for the dimensionless quantities, system (\ref{sys:NSdim}) is rewritten in its dimensionless form as: in $\Omega(t)$, for $t>0$,
\begin{equation}\label{sys:NSnondim}
\begin{split}
\frac{\partial}{\partial t} \bfv + [\bfv\cdot\nabla]\bfv - \div\bfsigma &= -\nabla\Phi\;,\\
\div\bfv &= 0\;,
\end{split}
\end{equation} 
where the dimensionless form of the stress tensor $\bfsigma$ reads
\begin{equation}
\bfsigma = -p\bbmI + \frac{1}{\noRe}\bbmD(\bfv)\;,
\end{equation}
and dimensionless form of the gravity potential is
\begin{equation}
\Phi(\bfx) = \frac{1}{\noFr^2}(\bfk\cdot\bfx).
\end{equation}
System (\ref{sys:NSnondim}) is completed with dimensionless boundary conditions obtained from (\ref{sys:NSdim-bc}): for $t>0$ and $\bftau\in\bfT\Gamma$ arbitrary,
\begin{equation}\label{sys:NSnondimbc}
\begin{split}
\bfsigma\bfn &= \frac{1}{\noCa\noRe}\Delta_\Sigma\bfx_\Sigma\textrm{ on }\Sigma(t),\\
\bfv\cdot\bfn &= 0 \textrm{ on }\Gamma(t),\\
\bfsigma\bfn\cdot\bftau &= - \varsigma(\bfv-\bfu)\cdot\bftau - \frac{1}{\noCa\noRe}(\cos\theta - \cos\theta_s)\bfm_{\partial\Gamma}\cdot\bftau \delta_\eta \textrm{ on }\Gamma(t),
\end{split}
\end{equation} 
where we dropped the {\it overline} symbol in dimensionless Navier slip coefficient $\varsigma$. Characteristic length and velocity have to be chosen according to the specific problem we consider. For example, in context of millimetric droplets where surface tension is among the dominant forces, characteristic length should be somehow related to the droplet curvature radius.

\section{Arbitrary Lagrangian--Eulerian framework}\label{sec:ALE}
The general idea behind ALE framework is the interplay between the reference and physical configurations. By physical configuration we mean the current, instantaneous configuration occupied by the fluid. Physical configuration is in one--to--one correspondence with stationary reference configuration through the ALE map. More specifically, let $\kOmega$ denotes the reference configuration and let $\kALE\colon\kOmega\times[0,T]\to\bbmR^d$ describe the evolution of $\Omega$, i.e.
\[
\Omega(t) = \image(\kALE(\cdot,t)),
\] 
where $\image(\kALE(\cdot,t))$ denotes the image of $\kALE(\cdot,t)$. For practical purposes, it may be assumed that reference configuration coincides with the initial configuration $\kOmega=\Omega(0)$. Regarding the regularity, it is assumed that, for $t\in[0,T]$, $\kOmega$ and $\image(\kALE(\cdot,t))$ are bounded domains with Lipschitz boundary, and 
\[
\kALE(\cdot,t)\in\sW^{1,\infty}(\kOmega;\bbmR^d)\;,\:\kALEinv(\cdot,t)\in\sW^{1,\infty}(\Omega(t);\bbmR^d).
\]
This is sufficient for the following condition to hold:
\[
\phi\in\sH^1(\Omega(t)) \: \textrm{if and only if} \: \widehat{\phi}=\phi\circ\kALE(\cdot,t)\in\sH^1(\kOmega).
\]
For more details on necessary regularity of the ALE map we refer to \cite{formaggia99}. Furthermore, let us denote the space--time domain of interest as
\begin{equation}
Q_T = \bigcup\limits_{t\in(0,T)}\Big( \Omega(t)\times\{t\} \Big),
\end{equation}
where $T$ denotes the final time. Then, in terms of the previously introduced ALE map $\kALE$, we may write
\begin{equation}\label{id:ale}
Q_T\ni(\bfx,t) = (\kALE(\kbfx,t),t)\textrm{, for }\kbfx\in\kOmega\textrm{ and }t\in(0,T).
\end{equation}
Now, for any field $f$ (scalar, vector or tensor) defined on physical configuration $Q_T$, $f\colon Q_T\to\bbmR^m$, $m\in\mathbb{N}$, its ALE counterpart $\widehat{f}$ is simply a composition with $\kALEinv$, i.e.
\[
\widehat{f}(\widehat{\bfx},t)=f(\bfx,t) \textrm{ for }  \widehat{\bfx}\in\kOmega\;,\:\bfx=\kALE(\widehat{\bfx},t)\in\Omega(t).
\]
In this sense, the domain velocity $\bfw$ is defined as 
\begin{equation}\label{eq:ALEw}
\begin{split}
& \widehat{\bfw}(\widehat{\bfx},t) = \frac{\partial}{\partial t}\kALE(\widehat{\bfx},t)\textrm{ in }\kOmega\textrm{, and}\\
& \bfw(\bfx,t) = \widehat{\bfw}(\kALEinv(\bfx,t),t)\textrm{ for }\bfx\in\Omega(t).
\end{split}
\end{equation}
Employing the ALE framework results in replacing the classic (Eulerian) temporal derivative with its ALE counterpart, namely
\[
\frac{\partial}{\partial t}\bfv = \partALEt\bfv - [\bfw\cdot\nabla]\bfv,
\]
where $\partALEt$ denotes temporal derivative with respect to the ALE map, i.e. for $\phi\colon Q_T \to \bbmR$
\[
\partALEt\phi = \frac{\partial}{\partial t}\phi + \bfw\cdot\nabla\phi.
\]

\subsection{Discrete ALE framework}
Notation for a fully discretized equations in ALE framework can become quite messy and confusing, and sometimes even ambiguous with multiple possible interpretations. It is desirable that the notation can capture both discretization in space (with finite element method) and in time (typically with finite difference scheme). Furthermore, occasionally there is a need to use the same expression (e.g. function) on two different domains in time -- e.g. during the linearization step. Hence, the underlying configuration has to be clear from the notation (e.g. $\Omega_h^n$ or $\Omega_h^{n+1}$). With that in mind, we introduce certain rules on subscripts and superscripts which enable us to capture all of this information within a single expression. 

In this context, we distinguish three sets of expressions: the space--time domain, ALE related fields describing the evolution of geometry, and fields defined on the space--time domain. Within this notation, fields intrinsic only to physical configuration such as normal and ALE velocity fall within the third category. Details are given below.

\subsubsection{Discretization of space--time domain}
Time interval $[0,T]$ is partitioned in a finite number of segments: for $N\in\mathbb{N}$
\[
0=t_0<t_1<\dots<t_N=T \textrm{ and } \Delta t=t_{n+1}-t_n\;,n=0,\dots,N\;,
\]
where we assumed that the time step $\Delta t$ is uniform. Spatial domain $\Omega$ at time instant $t_n$ is denoted by $\Omega^n$ for short, i.e. $\Omega^n=\Omega(t_n)$. Spatial discretization of the domain is denoted by subscript $h$, i.e. $\Omega_h$ denotes spatial domain discretized in space. Naturally, $\Omega_h^n$ denotes discrete spatial domain at time instant $t_n$. Triangulations of $\Omega_h$, $\Omega_h^n$ and $\kOmega_h$ are denoted by $\calT_h$, $\calT_h^n$ and $\kcalT_h$, respectively. We often identify discrete domain $\Omega_h$ with its triangulation $\calT_h$.

\subsubsection{ALE related fields}
Discrete ALE map $\kALE$, its gradient $\kALEF=\widehat{\nabla}\kALE$ and Jacobian $\kjac=\det\kALEF$ may have up to two subscripts: $h$ is reserved for the spatial discretization, and $n$ denotes that field is evaluated at time instant $t_n$. {\it Hat} over the {\it nabla} in expression $\widehat{\nabla}$ denotes that derivatives are taken with respect to $\kbfx$ variable. For example,
\[
\begin{split}
&\kALE_{h,n}\in\sA_h(\kOmega)\;,\:\kALE_{h,n}=\kALE_{h}(\cdot,t_n)\;,\textrm{ is discretized in space and time},\\
&\kALE_{h}\in\sA_h(\kOmega)\;,\:\kALE_{h}=\kALE_{h}(\cdot,t)\;,\textrm{ is discretized in space and continuous in time.}
\end{split}
\]
Above, $\sA_h(\kOmega)$ denotes the finite element space to which ALE map belongs to. Note that, if $\kOmega_h$ is polygonal and we wish for the $\Omega_h(t)$ to be polygonal at each time instant, then $\sA(\kOmega)=[\fespaceP1(\kcalT_h)]^d$. $\fespaceP1(\kcalT_h)$ denotes the finite element space of piecewise linear functions.

In practice (during implementation), ALE map is typically constructed piecewise, i.e. per time segment $[t_n,t_{n+1}]$. The following notation comes in handy:
\[
\begin{split}
&\ALE_h^{[n+1,n]}\colon\Omega_h^{n+1}\to\bbmR^d\;,\: \ALE_h^{[n+1,n]}(\cdot,t) = \kALE_h(\cdot,t)\circ\kALE_{h,n+1}^{\;-1}\;,\:t\in[t_n,t_{n+1}],\\
&\Omega_h^{n+1}\ni\bfx^{n+1}\mapsto\bfx(t)\in\Omega_h\;,\:t\in[t_n,t_{n+1}].
\end{split}
\]
Both $\ALE_h^{[n+1,n]}$ and $\ALE_h^{[n,n+1]}$ are defined on the same time segment $[t_n,t_{n+1}]$ but the spatial domain of $\ALE_h^{[n,n+1]}$ is $\Omega_h^{n}$. Adding a second subscript denotes evaluation in time, i.e.
\[
\begin{split}
&\ALE_{h,n+1}^{[n,n+1]}(\cdot)=\ALE_h^{[n,n+1]}(\cdot,t_{n+1}).
\end{split}
\]
The idea behind $\ALE_h^{[n+1,n]}$ and $\ALE_h^{[n,n+1]}$ is to map $\Omega_h^{n+1}$ to $\Omega_h^n$ and {\it vice versa}, e.g.
\[
\ALE_{h,n+1}^{[n,n+1]}(\bfx^n)=\bfx^{n+1}\;,\:\ALE_{h,n}^{[n,n+1]}(\bfx^n)=\bfx^{n}.
\]
\subsubsection{General fields defined on space--time domain}
As a general rule, discretization in space is denoted by a subscript ($h$) and discretization in time by a superscript (typically $n$ or $n+1$). Let $\phi$ be an arbitrary scalar, vector or tensor field defined on $Q_T$. Then, since $\phi$ is defined on moving domain, when evaluating $\phi$ at some time $t$, one has to keep in mind the underlying domain $\Omega(t)$ on which $\phi$ is defined.

In this context, subscript $h$ denotes that $\phi_h$ belongs to some finite element space over $\Omega_h$, and a superscript $n+1$ denotes that $\phi_h^{n+1}$ is evaluated at time $t_{n+1}$ (and implicitly, on $\Omega_h^{n+1}$). Formally, $\phi_h^{n+1}\colon\Omega_h^{n+1}\to\bbmR^m$, $m\in\mathbb{N}$. It is sometimes necessary to {\it pull back} $\phi_h^{n+1}$ to $\Omega_h^n$ or {\it push forward} $\phi_h^n$ to $\Omega_h^{n+1}$ (e.g. during the linearization step). Hence, the second subscript is added to emphasize the domain where $\phi_h^{n+1}$ is defined on, i.e.
\[
\phi_{h,n}^{n+1}\colon\Omega_h^n\to\bbmR^k\;,\:
\phi_{h,n}^{n+1} = \phi_h^{n+1}\circ\ALE_{h,n}^{[n+1,n]}.
\]
For consistency, we allow $\phi_h^{n+1}=\phi_{h,n+1}^{n+1}$, but, by definition, this is implicitly implied.

Finally, for the cases where spatial domain is implied by contexts, e.g. if $\phi_h^n$ appears under the integral sign, the second (temporal) subscript may be dropped. For example, it is implied
\[
\int\limits_{\partial\Omega_h^{n+1}} \phi_h^n\bfn\dbfx = \int\limits_{\partial\Omega_h^{n+1}} \phi_{h,n+1}^n\bfn^{n+1}\dbfx.
\]

\subsubsection{Spatial derivatives on space--time domain}
Unless otherwise stated, the spatial derivative is always taken with respect to the domain of definition of the function. For example, since $\phi_h^{n+1}=\phi_{h,n+1}^{n+1}$ is defined on $\Omega_h^{n+1}$,
\[
\nabla\phi_h^{n+1}\textrm{ corresponds to }\frac{\partial}{\partial\bfx^{n+1}}\phi_h^{n+1}.
\]
By similar reasoning,
\[
\begin{split}
& \nabla\phi_{h,n}^{n+1}\textrm{ corresponds to }\frac{\partial}{\partial\bfx^{n}}\phi_{h,n}^{n+1}\;\textrm{, and}\\
& \Big( \nabla\phi_h^{n+1}\Big)_n\textrm{ corresponds to }\Big(\frac{\partial}{\partial\bfx^{n+1}}\phi_h^{n+1}\Big)\circ\ALE_{h,n}^{[n+1,n]}.
\end{split}
\]

\subsection{Discrete normal to the boundary}
Throughout this paper, by $\bfn$ we denote the exact normal to the boundary. Specially, for the case of discrete boundary $\partial\Omega_h$, $\bfn$ is defined almost everywhere. If $\partial\Omega_h$ is piecewise linear (polygonal mesh), then $\bfn$ is piecewise constant, i.e. belongs to space $\P0fespace(\partial\Omega_h)$ -- $\bfn$ is not defined in the mesh vertices. It is often of interest to define normal which is defined everywhere on the boundary $\partial\Omega_h$. We denote such normal by $\bfnu_h$. 

In practice, $\bfnu_h$ is reconstructed from the exact, almost everywhere defined, normal $\bfn$. A good and concise review on this manner is given in \cite{cenanovic20}. In context of finite elements for incompressible flow, an interesting approach for construction $\bfnu_h$ is given in \cite{engelman82}. Within this approach, Stokes formula for the integration by parts holds on the discrete level. $\bfnu_h$ is important in some approaches for the mesh velocity construction (see the following subsection).

\subsection{On the construction of the ALE velocity}
Construction of the ALE velocity is central for ALE framework. Although ALE velocity is derived from the ALE map in the description of the ALE framework, in practice the process is usually converse. ALE velocity is constructed first from the fluid velocity, and then ALE map is defined via ordinary differential equation (\ref{eq:ALEw}). The following compatibility condition has to be satisfied on continuous level between fluid and domain velocity: ALE and fluid velocities must have the same normal components, i.e.
\begin{equation}\label{eq:ALEw-n}
\bfw\cdot\bfn = \bfv\cdot\bfn\;,\:\textrm{ on }\partial\Omega\;,\:t\in[0,T],
\end{equation}
while the ALE velocity may be chosen arbitrary in the domain interior. Compatibility condition (\ref{eq:ALEw-n}) ensures the volume preservation on the continuous level. Arbitrary extension of the velocity into the interior is often realized as harmonic or linear elasticity extension (see \cite{formaggia99}). In practice, ALE velocity may be constructed solving the following problem: for $t>0$ and $\bfv_h$ fluid velocity,
\begin{equation}
\begin{split}
-\Delta\bfw_h & = 0\textrm{ in }\Omega_h,\\
\bfw_h & = \frac{\bfv_h\cdot\bfnu_h}{\bfk_h\cdot\bfnu_h}\bfk_h\textrm{ on }\partial\Omega_h,
\end{split}
\end{equation} 
where $\bfk_h\colon\partial\Omega_h\to\bbmR^d$ is some user defined vector and depends on the physics of the problem. The trivial choice of $\bfk_h$ is $\bfk_h=\bfv_h$ which, formally, yields $\bfw_h=\bfv_h$ on $\partial\Omega_h$. This is a very common choice for simulating the kinematics of droplets with FEM (see, e.g., \cite{venkatesan16,ganesan09,willassen14}). 

In this paper, we use implicit discretization in time for the domain evolution. More specifically, given $\bfv_h^{n+1}$ and $\bfw_h^{n+1}$ such that
\begin{equation}\label{eq:discre-mesh-vel}
\begin{split}
-\Delta\bfw_h^{n+1} & = 0\textrm{ in }\Omega_h^{n+1},\\
\bfw_h^{n+1} & = \frac{\bfv_h^{n+1}\cdot\bfnu_h^{n+1}}{\bfk_h^{n+1}\cdot\bfnu_h^{n+1}}\bfk_h^{n+1}\textrm{ on }\partial\Omega_h^{n+1},
\end{split}
\end{equation}
$\Omega_h^{n+1}$ is obtained from $\Omega_h^n$ according to the following updating scheme:
\begin{equation}\label{id:discrete-ALE-map}
\Omega_h^{n+1}\ni\bfx^{n+1} = \bfx^n + \Delta t\;\bfw_{h,n}^{n+1}(\bfx^n)
\end{equation}
Clearly, an iterative procedure has to be employed for such approach -- $\Omega_h^{n+1}$ is obtained by updating $\Omega_h^n$ with velocity $\bfw_{h,n}^{n+1}$ evaluated on $\Omega_h^{n+1}$. Details on linearization and iterative updating strategy are given in Section~\ref{sec:numerical-validation}. 

Notice that for the case of polygonal mesh $\calT_h$, $\bfw_h$ has to be chosen from the space $[\fespaceP1(\calT_h)]^d$ in order for $\ALE_h^{[n,n+1]}$ to be in $[\fespaceP1(\calT_h)]^d$.
\begin{remark}\label{rem:div-vh}
It is important to note that discrete ALE map $\ALE_h^{[n,n+1]}$ constructed via identity (\ref{id:discrete-ALE-map}) does not necessarily preserves the volume of $\Omega_h$. Indeed, 
\begin{equation}
\div\bfv_h^{n+1}=0\textrm{ in }\Omega_h^{n+1}\textrm{ does not imply }\div\bfv_{h,n}^{n+1}=0\textrm{ in }\Omega_h^{n},
\end{equation}
where divergence is taken with respect to the domain of definition (i.e. w.r.t. $\bfx^{n+1}$ on $\Omega_h^{n+1}$ and w.r.t. $\bfx^{n}$ on $\Omega_h^{n}$). In practice, this issue is often unattended and circumvented by choosing time step $\Delta t$ sufficiently small so that there is no significant volume gain or loss. In a special case where mesh velocity $\bfw_{h,n}^{n+1}$ has a single component, exact mass/volume conservation may be obtained on the discrete level under some minor, but reasonable, assumptions. See \cite{gerbeau09} for details.
\end{remark}
\begin{remark}\label{rem:velocity-consistency}
By similar reasoning as in Remark~\ref{rem:div-vh}, it can be seen that, in general, condition (\ref{eq:discre-mesh-vel})$_2$ and mesh updating scheme (\ref{id:discrete-ALE-map}) do not necessary imply
\begin{equation}\label{eq:vel-normals-discrete}
\bfw_h^{n+1}\cdot\bfn = \bfv_h^{n+1}\cdot\bfn \textrm{ on }\partial\Omega_h(t)\textrm{, for }t\in(t_n,t_{n+1}).
\end{equation}
Identity (\ref{eq:vel-normals-discrete}) also depend on the definition of discrete normal $\bfnu_h$ in (\ref{eq:discre-mesh-vel})$_2$. Hence, in general, the following desirable identity derived from Reynolds transport theorem cannot be employed straightforwardly on the discrete level: for $t\in(t_n,t_{n+1})$
\begin{equation}\label{eq:discrete-Rey-ex}
\frac{\td}{\dt}\int\limits_{\Omega_h(t)}\dbfx = \int\limits_{\partial\Omega_h(t)}\bfw_h^{n+1}\cdot\bfn\dS = \int\limits_{\partial\Omega_h(t)}\bfv_h^{n+1}\cdot\bfn\dS.
\end{equation}
The last equality in the above equation is only true if (\ref{eq:vel-normals-discrete}) holds. Identity (\ref{eq:discrete-Rey-ex}) is crucial in derivation of energy estimates on a discrete level.
\end{remark}

\section{Variational formulation and energy balance}\label{sec:varf}
In this section the variational formulation of (dimensionless) system (\ref{sys:NSnondim})--(\ref{sys:NSnondimbc}) is derived and studied. Specially, we investigate in depth the energy balance of system (\ref{sys:NSnondim})--(\ref{sys:NSnondimbc}), which we later use as a guideline to derive the energy stable discretization.
\subsection{Variational formulation in ALE framework}
Employing identity (\ref{id:ale}), we introduce function spaces necessary for the derivation of the weak formulation:
\[
\begin{split}
& \sV(\Omega) = \left\{ \bfvarphi\colon Q_T\to\bbmR^d \mid \bfvarphi(\bfx,t)=\kbfvarphi(\kALEinv(\bfx,t))\;,\:\kbfvarphi\in\skV(\kOmega) \right\} \textrm{ where}\\
& \skV(\kOmega) = \sH^1_{\bfn,\Gamma}(\kOmega;\bbmR^d)=\left\{ \kbfvarphi\in\sH^1(\kOmega;\bbmR^d)\mid \kbfvarphi\cdot\widehat{\bfn_\Gamma}=0 \textrm{ on }\widehat{\Gamma} \right\}\;,
\end{split}
\]
and
\[
\sQ(\Omega) = \{\chi\colon Q_T\to\bbmR\mid \chi(\bfx,t)=\kchi(\kALEinv(\bfx,t)) \;,\: \kchi\in\sL^2(\kOmega)\}.
\]
Note that Dirichlet boundary condition (\ref{sys:NSnondimbc})$_2$ is embedded into the function space $\sV(\Omega)$ and that the test functions in $\sV(\Omega)$ are time independent within ALE framework, i.e.
\begin{equation}\label{id:time-indep}
\partALEt \bfvarphi = 0\;,\:\bfvarphi\in\sV(\Omega).
\end{equation} 

Weak formulation of system (\ref{sys:NSnondim})--(\ref{sys:NSnondimbc}) is obtained in a standard way. We multiply system (\ref{sys:NSnondim}) with test functions $\bfvarphi\in\sV(\Omega)$ and $\chi\in\sQ(\Omega)$, and integrate by parts, while employing the boundary conditions (\ref{sys:NSnondimbc}) in the process. The variational formulation then reads: 
\begin{equation}\label{sys:vf-continuous}
\begin{split}
&\textrm{find }\bfv\in\sL^2(0,T;\sH^1_{\bfn,\Gamma}(\Omega))\;,\:p\in\sL^2(0,T;\sL^2(\Omega)) \textrm{ and }\kALE\in\sH^1(0,T;\sW^{1,\infty}(\kOmega;\bbmR^d))\\
&\textrm{such that } \forall\:(\bfvarphi,\chi)\in\sV(\Omega)\times\sQ(\Omega)\\
& \frac{\td}{\dt}\int\limits_\Omega \bfvarphi\cdot\bfv\dbfx - \int\limits_\Omega \bfvarphi\cdot\bfv\div\bfw\dbfx + \int\limits_\Omega \bfvarphi\cdot[(\bfv-\bfw)\cdot\nabla]\bfv\dbfx \\
&\hspace{0.2\linewidth} + \int\limits_\Omega \frac{1}{2}\bbmD(\bfvarphi)\colon\frac{1}{\noRe}\bbmD(\bfv)\dbfx -\int\limits_\Omega p\div\bfvarphi\dbfx\\
& \hspace{0.2\linewidth} +\int\limits_\Gamma\bfvarphi\cdot\varsigma(\bfv-\bfu)\dS - \int\limits_\eta \bfvarphi\cdot\frac{1}{\noCa\noRe}\cos\theta_s\bfm_{\partial\Gamma}\ds + \int\limits_\Sigma \frac{\div_\Sigma\bfvarphi}{\noCa\noRe}\dS\\
& \hspace{0.2\linewidth} + \int\limits_\Omega\bfvarphi\cdot\nabla\Phi\dbfx =0,\\
& \int\limits_\Omega \chi\div\bfv\dbfx =0,
\end{split}
\end{equation}
where $\bfw=\partial_t\kALE$ satisfies
\[
\bfw\cdot\bfn = \bfv\cdot\bfn\textrm{ on }\partial\Omega(t)\;,\:\Delta\bfw=0\textrm{ in }\Omega(t).
\]
Specially, ALE time derivative is extracted in front of the integral sign by employing Reynolds transport theorem in the process and using the property (\ref{id:time-indep}).

The following notation is introduced for a compact notation:
\begin{equation}
\begin{split}
& m(\bfvarphi,\bfv) = \int\limits_\Omega\bfvarphi\cdot\bfv\dbfx \;,\: d(\bfvarphi,\bfv;\bfu) = \int\limits_\Omega \bfvarphi\cdot\bfv\div\bfu\dbfx \;,\: c(\bfvarphi,\bfv;\bfu) = \int\limits_\Omega \bfvarphi\cdot[\bfu\cdot\nabla]\bfv\dbfx \;,\\
& a(\bfvarphi,\bfv) = \int\limits_\Omega \frac{1}{2}\bbmD(\bfvarphi)\colon\frac{1}{\noRe}\bbmD(\bfv)\dbfx \;,\: b(\bfvarphi,p) = \int\limits_\Omega p\div\bfvarphi \dbfx \;,\: r(\bfvarphi,\bfv) = \int\limits_\Gamma \bfvarphi\cdot\varsigma\bfv\dbfx \;,\\
& f_\textrm{cl}(\bfvarphi) = \int\limits_\eta\frac{\cos\theta_s}{\noCa\noRe}\bfm_{\partial\Gamma}\cdot\bfvarphi \dS \;,\: f_\textrm{st}(\bfvarphi) = \int\limits_\Sigma \frac{\div_\Sigma\bfvarphi}{\noCa\noRe}\dS \;,\: f_\textrm{g}(\bfvarphi) = \int\limits_\Omega \bfvarphi\cdot\nabla\Phi\dbfx \;,\\
& f_\textrm{gs}(\bfvarphi) = \int\limits_{\Sigma}\Phi\bfvarphi\cdot\bfn\dS \;.
\end{split}
\end{equation}

\subsection{Energy balance}
This subsection is devoted to the derivation of the energy balance of system (\ref{sys:NSnondim}) subjected to boundary conditions (\ref{sys:NSnondimbc}). We start by introducing the following notation: kinetic energy $E_k(t)$, viscous power $P_v(t)$, friction power $P_{fr}(t)$, free surface energy $E_{fs}(t)$, wetting energy $E_w(t)$ and potential energy $E_p(t)$, defined by
\begin{equation}\label{eq:energies-notation}
\begin{split}
& E_k(t)=\int\limits_\Omega\frac{1}{2}\vert\bfv\vert^2\dbfx\;,\: P_v(t)=\int\limits_\Omega\frac{1}{2\noRe}\vert\bbmD(\bfv)\vert^2\dbfx\;,\: P_{fr}(t)=\int\limits_\Gamma\varsigma(\bfv-\bfu)\cdot\bfv\dS\;,\\
& E_{fs}(t)=\int\limits_\Sigma\frac{1}{\noCa\noRe}\dS\;,\: E_w(t)=\int\limits_\Gamma\frac{-\cos\theta_s}{\noCa\noRe}\dS\;,\textrm{ and}\: E_p(t)=\int\limits_\Omega\Phi\dbfx.
\end{split}
\end{equation}
The main result of this subsection is the following proposition.
\begin{prop}\label{prop:ceb}
System (\ref{sys:NSnondim}) subjected to boundary conditions (\ref{sys:NSnondimbc}) satisfies the following energy equality:
\begin{equation}\label{eq:energy-balance}
\frac{\td}{\dt} E_k(t) + P_v(t) + \frac{\td}{\dt}E_{fs}(t) + \frac{\td}{\dt}E_{w}(t) + P_{fr}(t) + \frac{\td}{\dt}E_{p}(t) = 0.
\end{equation}
\end{prop}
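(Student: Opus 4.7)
\bigskip

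\noindent\textbf{Proof plan.}
The plan is to test the strong momentum equation \eqref{sys:NSnondim}$_1$ with $\bfv$ itself and integrate over $\Omega(t)$, then manipulate each of the four resulting terms so that boundary contributions can be absorbed into total time derivatives of the geometric energies $E_{fs}$, $E_w$ and $E_p$. The continuity equation $\div\bfv=0$ and the compatibility $\bfw\cdot\bfn=\bfv\cdot\bfn$ on $\partial\Omega$ will be used repeatedly.

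First I would treat the \emph{volume terms}. Applying Reynolds transport to $E_k(t)=\tfrac{1}{2}\int_\Omega|\bfv|^2$ yields
$\int_\Omega\bfv\cdot\partial_t\bfv=\tfrac{\mathrm{d}}{\dt}E_k-\int_{\partial\Omega}\tfrac{1}{2}|\bfv|^2\,\bfw\cdot\bfn$,
while $\div\bfv=0$ turns the convective term into $\int_{\partial\Omega}\tfrac{1}{2}|\bfv|^2\,\bfv\cdot\bfn$; these two boundary integrals cancel by $\bfw\cdot\bfn=\bfv\cdot\bfn$. Integration by parts on the stress term gives $P_v(t)$ in the interior (using symmetry of $\bbmD$ and $\div\bfv=0$) plus the boundary term $-\int_{\partial\Omega}\bfv\cdot\bfsigma\bfn$. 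The gravity term becomes $\int_\Omega\bfv\cdot\nabla\Phi=\int_{\partial\Omega}\Phi\,\bfv\cdot\bfn=\int_{\partial\Omega}\Phi\,\bfw\cdot\bfn=\tfrac{\mathrm{d}}{\dt}E_p$, because $\Phi$ is time-independent and another Reynolds transport applied to $E_p$ has no interior contribution.

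Next I would split the remaining boundary integral along $\Sigma$ and $\Gamma$ and insert the boundary conditions \eqref{sys:NSnondimbc}. On $\Gamma$, using $\bfv\cdot\bfn=0$ one can choose the tangent test direction $\bftau=\bfv$ in the GNBC, obtaining the friction dissipation $P_{fr}$ together with a line contribution $\int_\eta\frac{\cos\theta-\cos\theta_s}{\noCa\noRe}\,\bfm_{\partial\Gamma}\cdot\bfv\,\mathrm{d}s$. On $\Sigma$, one obtains $-\int_\Sigma\bfv\cdot\Delta_\Sigma\bfx_\Sigma/(\noCa\noRe)$. Collecting everything so far gives an intermediate identity of the form
\begin{equation*}
\tfrac{\mathrm{d}}{\dt}E_k + P_v + P_{fr} + \tfrac{\mathrm{d}}{\dt}E_p - \tfrac{1}{\noCa\noRe}\!\!\int_\Sigma\bfv\cdot\Delta_\Sigma\bfx_\Sigma\,\mathrm{d}S + \tfrac{1}{\noCa\noRe}\!\!\int_\eta(\cos\theta-\cos\theta_s)\bfm_{\partial\Gamma}\cdot\bfv\,\mathrm{d}s = 0.
\end{equation*}

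The hard part, and the final step, is to recognize the two remaining boundary terms as $\tfrac{\mathrm{d}}{\dt}E_{fs}+\tfrac{\mathrm{d}}{\dt}E_w$. For this I would invoke the Reynolds transport theorem on the moving hypersurface $\Sigma$, combined with the surface divergence identity $\int_\Sigma \div_\Sigma\bfw\,\mathrm{d}S=\int_\Sigma\bfw\cdot\bfh\,\mathrm{d}S+\int_\eta\bfw\cdot\bfm_{\partial\Sigma}\,\mathrm{d}s$ and the identity $\bfh=-\Delta_\Sigma\bfx_\Sigma$. Because $\bfh$ is purely normal and $\bfw\cdot\bfn=\bfv\cdot\bfn$ on $\Sigma$, the interior piece reduces to $-\int_\Sigma\bfv\cdot\Delta_\Sigma\bfx_\Sigma\,\mathrm{d}S$, whereas on $\eta$ the vectors $\bfv$ and $\bfw$ lie in $\bfT\Gamma$, so decomposing in the orthonormal frame $(\bft_\eta,\bfm_{\partial\Gamma},\bfn_\Gamma)$ and using $\bfm_{\partial\Sigma}\cdot\bfm_{\partial\Gamma}=\cos\theta$ (the very definition \eqref{eq:dyn_cont_angle_def}) yields $\bfw\cdot\bfm_{\partial\Sigma}=\cos\theta\,\bfv\cdot\bfm_{\partial\Gamma}$. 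This produces
$\tfrac{\mathrm{d}}{\dt}E_{fs}=\tfrac{1}{\noCa\noRe}[-\int_\Sigma\bfv\cdot\Delta_\Sigma\bfx_\Sigma+\int_\eta\cos\theta\,\bfv\cdot\bfm_{\partial\Gamma}]$. An analogous argument on the fixed supporting surface $\Gamma$ (where only the wetted region evolves, and the contact line moves with normal-in-$\Gamma$ speed $\bfv\cdot\bfm_{\partial\Gamma}$) gives $\tfrac{\mathrm{d}}{\dt}E_w=-\tfrac{1}{\noCa\noRe}\int_\eta\cos\theta_s\,\bfv\cdot\bfm_{\partial\Gamma}\,\mathrm{d}s$. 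Substituting these two expressions into the intermediate identity causes the two $\cos\theta$-line contributions to cancel and leaves exactly $\eqref{eq:energy-balance}$. The most delicate point is fixing the signs and co-normal conventions in the surface divergence theorem and verifying the geometric identity $\bfw\cdot\bfm_{\partial\Sigma}=\cos\theta\,\bfv\cdot\bfm_{\partial\Gamma}$ on $\eta$; once these are in place the cancellation is algebraic.
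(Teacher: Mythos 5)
Your proposal is correct and follows essentially the same route as the paper: test with $\bfv$, use the co--dimension--$0$ and co--dimension--$1$ Reynolds transport theorems (the paper's Lemmas~\ref{lem:kinetic}--\ref{lem:potential}) to identify each boundary/volume term with a time derivative of an energy, and cancel the dynamic--contact--angle line terms via $\bfv\cdot\bfm_{\partial\Sigma}=\cos\theta\,\bfv\cdot\bfm_{\partial\Gamma}$ on $\eta$. The only difference is organizational: the paper performs the surface integration by parts on $\Sigma$ and the $\cos\theta$ cancellation already when deriving the weak formulation (\ref{sys:vf-continuous}), whereas you keep the strong-form boundary terms and cancel at the end, which yields the same computation.
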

Derivation of the energy balance stated in Proposition~\ref{prop:ceb} is fairly standard and straightforward. Moreover, it has already been derived in \cite{gerbeau09} and used as a base for a special case of problem (\ref{sys:NSnondim})--(\ref{sys:NSnondimbc}) in which ALE velocity $\bfw$ has only one non--vanishing component -- they studied the case where $\bfw=[0,0,w_z]^T$. Achieving energy stability on the discrete level for the case of arbitrary $\bfw$ becomes significantly more technical and demanding. Some properties, such as SCL, that are trivial on a continuous level, become hard to satisfy during the space and/or temporal discretization. This is the reason why we, at least concisely, summarize the derivation of energy balance equation (\ref{eq:energy-balance}) in the next few lemmas. We recall some of these steps later during the transition to discrete level.

\begin{lem}\label{lem:kinetic}
Change in kinetic energy satisfies
\begin{equation}\label{id:kinetic}
\frac{\td}{\dt}E_k(t) = \int\limits_\Omega\bfv\cdot\left( \partALEt\bfv +[(\bfv-\bfw)\cdot\nabla]\bfv \right)\dbfx.
\end{equation}
\end{lem}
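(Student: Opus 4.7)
The plan is to start from the definition $E_k(t) = \int_\Omega \tfrac{1}{2}|\bfv|^2\,d\bfx$ and apply the Reynolds transport theorem in its ALE form, namely
\[
\frac{\td}{\dt}\int_{\Omega(t)}\phi\,d\bfx = \int_{\Omega(t)}\bigl(\partALEt\phi + \phi\,\div\bfw\bigr)\,d\bfx,
\]
with $\phi = \tfrac{1}{2}|\bfv|^2$. Using the fact that $\partALEt$ obeys the product rule (it is just a directional derivative along the spacetime curve $t\mapsto(\kALE(\kbfx,t),t)$), this gives
\[
\frac{\td}{\dt}E_k(t) = \int_\Omega \bfv\cdot\partALEt\bfv\,d\bfx + \int_\Omega \tfrac{1}{2}|\bfv|^2\div\bfw\,d\bfx.
\]
So the lemma reduces to showing that the volumetric correction term equals the convective piece, i.e.
\[
\int_\Omega \tfrac{1}{2}|\bfv|^2 \div\bfw\,d\bfx = \int_\Omega \bfv\cdot[(\bfv-\bfw)\cdot\nabla]\bfv\,d\bfx.
\]

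To verify this identity I would rewrite the right--hand side using $\bfv\cdot[\bfu\cdot\nabla]\bfv = \bfu\cdot\nabla(\tfrac{1}{2}|\bfv|^2)$ with $\bfu=\bfv-\bfw$, then integrate by parts:
\[
\int_\Omega (\bfv-\bfw)\cdot\nabla\bigl(\tfrac{1}{2}|\bfv|^2\bigr)\,d\bfx = -\int_\Omega \tfrac{1}{2}|\bfv|^2\,\div(\bfv-\bfw)\,d\bfx + \int_{\partial\Omega} \tfrac{1}{2}|\bfv|^2\,(\bfv-\bfw)\cdot\bfn\,dS.
\]
The boundary term vanishes by the ALE compatibility condition $\bfv\cdot\bfn=\bfw\cdot\bfn$ on $\partial\Omega$ (which holds on $\Sigma$ by construction of $\bfw$ and on $\Gamma$ since $\bfv\cdot\bfn=0$ and $\bfw\cdot\bfn=0$ there as well). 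The divergence in the bulk simplifies to $\div(\bfv-\bfw)=-\div\bfw$ thanks to the incompressibility constraint $\div\bfv=0$, which converts the remaining volume integral into exactly $\int_\Omega \tfrac{1}{2}|\bfv|^2\div\bfw\,d\bfx$. Plugging this back gives the stated equality.

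This proof is essentially a chain of routine manipulations, so there is no genuinely hard step; the only things one has to be a little careful about are invoking the correct ALE version of Reynolds' formula, and checking that the boundary term really drops out, which uses both $\bfv\cdot\bfn_\Gamma=0$ (a Dirichlet condition) and the kinematic compatibility $\bfv\cdot\bfn_\Sigma=\bfw\cdot\bfn_\Sigma$ built into the ALE construction. These are precisely the continuous--level properties whose discrete counterparts (SCL and the discrete analogue of $\bfv_h\cdot\bfn=\bfw_h\cdot\bfn$) are flagged in Remarks~\ref{rem:div-vh}--\ref{rem:velocity-consistency} as the delicate ingredients for energy stability later on.
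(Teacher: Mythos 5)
Your proposal is correct and follows essentially the same route as the paper: the ALE Reynolds transport theorem handles the transient term, and the convective term is reduced by the identity $\bfv\cdot[\bfu\cdot\nabla]\bfv=\bfu\cdot\nabla(\tfrac12\vert\bfv\vert^2)$ plus integration by parts, with the boundary term killed by $\bfv\cdot\bfn=\bfw\cdot\bfn$ and the bulk term by $\div\bfv=0$. The only difference is organizational (you start from $E_k$ and work forward, the paper assembles the two integral identities and sums them), which is immaterial.
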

\begin{proof}
Employing Reynolds transport theorem in the process, it is straightforward to obtain
\begin{equation}\label{id:transient}
\int\limits_\Omega\bfv\cdot\partALEt\bfv\dbfx = \int\limits_\Omega\partALEt\left( \frac{1}{2}\vert\bfv\vert^2\right)\dbfx = \frac{\td}{\dt}\int\limits_\Omega\frac{1}{2}\vert\bfv\vert^2\dbfx - \int\limits_\Omega \frac{1}{2}\vert\bfv\vert^2\div\bfw\dbfx.
\end{equation}
Furthermore, integration by parts in convective term gives us
\begin{equation}\label{id:convective}
\begin{split}
&\int\limits_\Omega\bfv\cdot[(\bfv-\bfw)\cdot\nabla]\bfv \dbfx = \int\limits_\Omega (\bfv-\bfw)\cdot\nabla\left( \frac{1}{2}\vert\bfv\vert^2 \right)\dbfx\\
&\hspace{0.1\linewidth} = \int\limits_{\partial\Omega}\frac{1}{2}\vert\bfv\vert^2(\bfv-\bfw)\cdot\bfn\dS - \int\limits_\Omega\frac{1}{2}\vert\bfv\vert^2\div\bfv\dbfx + \int\limits_\Omega\frac{1}{2}\vert\bfv\vert^2\div\bfw\dbfx.
\end{split}
\end{equation}
Summing up (\ref{id:transient}) and (\ref{id:convective}) while using (\ref{sys:NSnondim})$_2$ and (\ref{eq:ALEw}), we obtain (\ref{id:kinetic}).
\end{proof}

\begin{lem}\label{lem:fse}
Change in free surface energy satisfies
\begin{equation}\label{eq:fs-energy}
\frac{\td}{\dt}E_{fs}(t)=\int\limits_{\Sigma}\frac{\div_\Sigma\bfv}{\noCa\noRe}\dS 
\end{equation}
\end{lem}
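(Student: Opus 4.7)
The plan is to apply the Reynolds transport theorem for evolving surfaces to $|\Sigma(t)|$ and then use the kinematic compatibility condition $\bfw\cdot\bfn=\bfv\cdot\bfn$ to trade the ALE velocity on $\Sigma$ for the fluid velocity inside the surface divergence. Since $1/(\noCa\noRe)$ is a constant in time, the claim reduces to showing
\[
\frac{\td}{\dt}\vert\Sigma(t)\vert = \int\limits_\Sigma \div_\Sigma\bfv\dS.
\]

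First I would invoke the Reynolds transport theorem on the moving manifold $\Sigma(t)$ with parameterization velocity $\bfw$, which (applied to the constant density $f\equiv 1$) yields
\[
\frac{\td}{\dt}\vert\Sigma(t)\vert = \int\limits_\Sigma \div_\Sigma\bfw\dS.
\]
This is the co-dimension-one analogue of the standard Reynolds formula used later in the paper (and is the identity whose discrete counterpart is the generalized SCL of \cite{gerbeau09}). Next, decomposing $\bfv = \bfw + (\bfv-\bfw)$ and writing the difference as a surface divergence, it remains to prove
\[
\int\limits_\Sigma \div_\Sigma(\bfv-\bfw)\dS = 0.
\]
The compatibility condition $\bfw\cdot\bfn_\Sigma = \bfv\cdot\bfn_\Sigma$ on $\Sigma$ forces $\bfv-\bfw$ to be tangential to $\Sigma$, so the surface divergence theorem applied to this tangential field leaves only a boundary contribution along $\eta$:
\[
\int\limits_\Sigma \div_\Sigma(\bfv-\bfw)\dS = \int\limits_\eta (\bfv-\bfw)\cdot\bfm_{\partial\Sigma}\ds,
\]
the curvature term vanishing because $(\bfv-\bfw)\cdot\bfn_\Sigma=0$.

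The subtle step, and the one I expect to be the main obstacle, is ruling out this boundary contribution at the contact line $\eta=\partial\Sigma=\partial\Gamma$. On $\eta$ the compatibility applies simultaneously on both $\Sigma$ and $\Gamma$, so $\bfv-\bfw$ is orthogonal to both $\bfn_\Sigma$ and $\bfn_\Gamma$; in the three-dimensional case this forces $\bfv-\bfw$ to be parallel to $\bft_\eta = \bfn_\Sigma\times\bfn_\Gamma$. Since $\bfm_{\partial\Sigma}=\bft_\eta\times\bfn_\Sigma$ is orthogonal to $\bft_\eta$ by construction, the pointwise product $(\bfv-\bfw)\cdot\bfm_{\partial\Sigma}$ vanishes on $\eta$, and the integral is zero. (In two dimensions $\eta$ reduces to isolated points and $\bfv-\bfw$ is zero there outright.) Combining this with the Reynolds identity and the constancy of $1/(\noCa\noRe)$ delivers \eqref{eq:fs-energy}.
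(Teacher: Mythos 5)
Your proposal is correct and follows essentially the same route as the paper: the generalized Reynolds transport theorem for co--dimension--one manifolds gives \(\tfrac{\td}{\dt}\vert\Sigma\vert=\int_\Sigma\div_\Sigma\bfw\dS\), and the compatibility condition \(\bfw\cdot\bfn=\bfv\cdot\bfn\) together with the tangential/normal decomposition lets you replace \(\bfw\) by \(\bfv\) under the surface divergence. The paper compresses this replacement into one line, whereas you usefully spell out its only nontrivial ingredient, namely that the residual contact--line term \(\int_\eta(\bfv-\bfw)\cdot\bfm_{\partial\Sigma}\ds\) vanishes because \(\bfv-\bfw\) is parallel to \(\bft_\eta\) on \(\eta\).
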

\begin{proof}
Employing the generalization of Reynolds transport theorem for manifolds of co--dimension $1$ (\ref{thm:generalizedRTT}), while using identity (\ref{eq:ALEw-n}) and decomposition $\bfv=\PS\bfv + (\bfv\cdot\bfn)\bfn$, it is obtained
\begin{equation}
\int\limits_{\Sigma}\frac{\div_\Sigma\bfv}{\noCa\noRe}\dS = \int\limits_{\Sigma}\frac{\div_\Sigma\bfw}{\noCa\noRe}\dS = \frac{\td}{\dt} \int\limits_{\Sigma}\frac{1}{\noCa\noRe}\dS.
\end{equation}
\end{proof}

\begin{lem}\label{lem:wett}
Change in wetting energy satisfies
\begin{equation}\label{eq:wett-energy}
\frac{\td}{\dt}E_w(t) = \int\limits_{\partial\Gamma} \frac{-\cos\theta_s}{\noCa\noRe}\bfm_{\partial\Gamma}\cdot\bfv\ds.
\end{equation}
\end{lem}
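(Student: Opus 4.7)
The plan is to mirror the derivation of Lemma~\ref{lem:fse}, but with the solid--liquid interface $\Gamma$ in place of the free surface $\Sigma$, exploiting two features specific to this case. First, the wetting density $-\cos\theta_s/(\noCa\noRe)$ is an explicitly time--independent Eulerian function of position, since $\theta_s=\theta_s(\bfx_\Gamma)$. Second, the mesh velocity $\bfw$ is tangent to $\Gamma$ on $\Gamma$: the compatibility condition (\ref{eq:ALEw-n}) combined with the Dirichlet boundary condition (\ref{sys:NSnondimbc})$_2$ yields $\bfw\cdot\bfn_\Gamma=\bfv\cdot\bfn_\Gamma=0$.

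The first step is to apply the same co--dimension--one Reynolds transport theorem as in Lemma~\ref{lem:fse}, now on $\Gamma$:
\[
\frac{\td}{\dt}E_w(t)=\int\limits_{\Gamma}\bigl(\partALEt f+f\,\div_\Gamma\bfw\bigr)\dS,\quad f=\frac{-\cos\theta_s}{\noCa\noRe}.
\]
Since $\partial_t f\equiv 0$ in the Eulerian sense and $\bfw$ is tangent to $\Gamma$, one has $\partALEt f=\bfw\cdot\nabla f=\bfw\cdot\nabla_\Gamma f$, and the integrand collapses by the tangential product rule to $\div_\Gamma(f\bfw)$. Because $\Gamma$ is planar and $f\bfw$ is tangent to $\Gamma$, the surface divergence theorem carries no curvature correction and reduces to
\[
\frac{\td}{\dt}E_w(t)=\int\limits_{\partial\Gamma}\frac{-\cos\theta_s}{\noCa\noRe}\,\bfw\cdot\bfm_{\partial\Gamma}\ds.
\]

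The remaining step, which I expect to be the main obstacle, is to replace $\bfw\cdot\bfm_{\partial\Gamma}$ by $\bfv\cdot\bfm_{\partial\Gamma}$ on $\eta=\partial\Gamma$. The scalar compatibility $\bfw\cdot\bfn=\bfv\cdot\bfn$ only fixes the normal component of $\bfw-\bfv$ on each boundary piece individually; to promote this to the required tangential identity one must use that at points of $\eta=\Sigma\cap\Gamma$ two distinct normals are available. Indeed, on $\eta$ both $(\bfw-\bfv)\cdot\bfn_\Sigma=0$ (from the $\Sigma$ side) and $(\bfw-\bfv)\cdot\bfn_\Gamma=0$ (from the $\Gamma$ side combined with the Dirichlet condition) hold. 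In three dimensions $\{\bfn_\Sigma,\bfn_\Gamma\}$ spans the plane orthogonal to $\bft_\eta=\bfn_\Sigma\times\bfn_\Gamma$, so $\bfw-\bfv$ is parallel to $\bft_\eta$ and hence orthogonal to $\bfm_{\partial\Gamma}$; in two dimensions $\{\bfn_\Sigma,\bfn_\Gamma\}$ is already a basis of $\bbmR^2$ at the contact point, forcing $\bfw=\bfv$ there. The argument implicitly requires $\theta\in(0,\pi)$ so that $\bfn_\Sigma$ and $\bfn_\Gamma$ are linearly independent. Substituting delivers (\ref{eq:wett-energy}).
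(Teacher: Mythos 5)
Your proof is correct and follows essentially the same route as the paper: the co--dimension--one Reynolds transport theorem on $\Gamma$ with $\theta_s=\theta_s(\bfx_\Gamma)$ and $\bfw\cdot\bfn=0$ on $\Gamma$, collapsing the integrand to $\div_\Gamma(f\bfw)$ and applying the surface divergence theorem. The only difference is that you justify the step $\bfw\cdot\bfm_{\partial\Gamma}=\bfv\cdot\bfm_{\partial\Gamma}$ on $\eta$ explicitly (via the two independent normals at the contact line), whereas the paper leaves that identification implicit; your version is a welcome clarification, not a deviation.
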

\begin{proof}
Similarly as in Lemma~\ref{lem:fse}, we employ Reynolds transport theorem for manifolds of co--dimension $1$ while using $\theta_s=\theta_s(\bfx_\Gamma)$ and $\bfw\cdot\bfn=0$ on $\Gamma$ to obtain
\begin{equation}
\int\limits_{\partial\Gamma}\frac{\cos\theta_s}{\noCa\noRe}\bfm_{\partial\Gamma}\cdot\bfv\ds  = \int\limits_\Gamma\div_\Gamma\left(\frac{\cos\theta_s}{\noCa\noRe}\bfw\right)\dS = \frac{\td}{\dt}\int\limits_\Gamma\frac{\cos\theta_s}{\noCa\noRe}\dS.
\end{equation}
\end{proof}

\begin{lem}\label{lem:potential}
Change in potential energy satisfies
\begin{equation}\label{eq:pot-energy}
\frac{\td}{\dt}E_p(t) = \int\limits_\Omega \bfv\cdot\nabla\Phi\dbfx.
\end{equation}
\end{lem}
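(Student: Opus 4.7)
The plan is to proceed in direct analogy with Lemma~\ref{lem:kinetic}, namely apply the Reynolds transport theorem to $E_p(t)=\int_{\Omega(t)}\Phi\,d\bfx$ and then eliminate the domain velocity $\bfw$ in favor of the fluid velocity $\bfv$ using the ALE compatibility condition (\ref{eq:ALEw-n}) together with incompressibility (\ref{sys:NSnondim})$_2$. The only subtlety worth noting is that $\Phi=\Phi(\bfx)$ is time--independent in the Eulerian sense, so its ALE derivative satisfies $\partial_t^{ALE}\Phi=\bfw\cdot\nabla\Phi$.

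First, I would invoke the Reynolds transport theorem on $E_p(t)$, obtaining
\[
\frac{\td}{\dt}E_p(t) \;=\; \int\limits_\Omega \bigl(\partALEt\Phi + \Phi\,\div\bfw\bigr)\,\dbfx \;=\; \int\limits_\Omega \bigl(\bfw\cdot\nabla\Phi + \Phi\,\div\bfw\bigr)\,\dbfx \;=\; \int\limits_\Omega \div(\Phi\,\bfw)\,\dbfx,
\]
where I used $\partial_t\Phi\equiv0$ in the first equality and the product rule in the last. Next, I apply the divergence theorem to rewrite this as a boundary integral $\int_{\partial\Omega}\Phi\,\bfw\cdot\bfn\,\dS$.

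At this point I invoke the compatibility condition $\bfw\cdot\bfn=\bfv\cdot\bfn$ on $\partial\Omega$ (equation (\ref{eq:ALEw-n})), which replaces $\bfw$ by $\bfv$ on the boundary, yielding $\int_{\partial\Omega}\Phi\,\bfv\cdot\bfn\,\dS$. Running the divergence theorem in reverse and then using $\div\bfv=0$ inside $\Omega$ collapses this to
\[
\int\limits_{\partial\Omega}\Phi\,\bfv\cdot\bfn\,\dS \;=\; \int\limits_\Omega \div(\Phi\,\bfv)\,\dbfx \;=\; \int\limits_\Omega \bfv\cdot\nabla\Phi\,\dbfx + \int\limits_\Omega \Phi\,\div\bfv\,\dbfx \;=\; \int\limits_\Omega \bfv\cdot\nabla\Phi\,\dbfx,
\]
which is exactly (\ref{eq:pot-energy}).

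There is essentially no hard step here; the proof is a three--line calculation. The only point requiring care is the bookkeeping: one must remember that $\partial_t^{ALE}\Phi$ is nonzero even though $\Phi$ is time--independent in Eulerian coordinates, and that the two places where $\bfw$ could have produced spurious contributions (the divergence term from Reynolds and the boundary flux) conspire to reduce, via (\ref{eq:ALEw-n}) and $\div\bfv=0$, to the purely Eulerian expression $\int_\Omega\bfv\cdot\nabla\Phi\,\dbfx$. This cancellation is precisely the sort of structure that will be difficult to preserve at the discrete level, as anticipated in Remark~\ref{rem:div-vh} and Remark~\ref{rem:velocity-consistency}.
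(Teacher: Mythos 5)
Your proposal is correct and uses exactly the same ingredients as the paper's proof (Reynolds transport theorem with $\Phi$ time--independent, the divergence theorem, the compatibility condition $\bfw\cdot\bfn=\bfv\cdot\bfn$, and $\div\bfv=0$); the paper merely runs the same chain of identities in the opposite direction, starting from $\int_\Omega\bfv\cdot\nabla\Phi\,\dbfx$ and ending at $\frac{\td}{\dt}\int_\Omega\Phi\,\dbfx$.
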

\begin{proof}
Employing Reynolds transport theorem and using $\Phi=\Phi(\bfx)$ and $\bfw\cdot\bfn=\bfv\cdot\bfn$ on $\partial\Omega$, it is straightforward to obtain
\begin{equation}
\begin{split}
& \int\limits_\Omega \bfv\cdot\nabla\Phi\dbfx = \int\limits_\Omega\Big( \div(\Phi\bfv) - \Phi\div\bfv \Big)\dbfx = \int\limits_{\partial\Omega}\Phi\bfv\cdot\bfn\dS - \int\limits_\Omega \Phi\div\bfv \dbfx\\
&\hspace{0.2\linewidth} = \int\limits_{\partial\Omega}\Phi\bfw\cdot\bfn\dS = \frac{\td}{\dt} \int\limits_\Omega \Phi\dbfx.
\end{split}
\end{equation}
\end{proof}

Finally, we have all the ingredients to prove Proposition~\ref{prop:ceb}.
\begin{proof}Proof of Proposition~\ref{prop:ceb}. We multiply system (\ref{sys:NSnondim}) by $(\bfv,p)$ and integrate by parts employing boundary conditions (\ref{sys:NSnondimbc}) in the process. Using the identities derived in Lemmas~\ref{lem:kinetic}--\ref{lem:potential}, the identity (\ref{eq:energy-balance}) follows by straightforward calculation.
\end{proof}

\section{FEM formulation and discrete energy stability}\label{sec:energy-stable-scheme}

\subsection{Spatial discretization}
Discretization in space is performed in a standard way characteristic for finite element method. First, domain $\Omega$ is replaced by its discrete couterpart $\Omega_h$ on which mesh $\calT_h$ is constructed. Then, function spaces $\sV(\Omega)$ and $\sQ(\Omega)$ are replaced by their finite element counterparts $\sV_h(\Omega_h)$ and $\sQ_h(\Omega_h)$ built over triangulation $\calT_h$. 

Discretization $\Omega\mapsto\Omega_h$ essentially means that the boundary $\partial\Omega=\Sigma\cup\Gamma$ is replaced by its discrete counterpart. Most commonly for FEM, the approximation of geometry is with piecewise linear functions. Higher order polynomial approximation (characteristic for isoparametric FEM) is also often applied. This paper focuses on the case of piecewise linear triangular discretization of the boundary $\partial\Omega$ and tetrahedral meshing of the domain interior. However, most of the analysis performed in this paper can be extended straightforwardly to the cases of higher order polynomial approximation of the geometry.

Space discretization of the boundary plays the central role in imposing the free surface boundary condition (\ref{sys:NSnondimbc})$_1$. The geometry of the boundary is embedded into this boundary condition through the curvature, which is rewritten in terms of surface Laplacian (\ref{eq:curvature}). This imposes a constraint on the order of the basis functions involved in the curvature evaluation term (for details see \cite{ivancic20}). We describe this issues concisely in the following paragraph.

Let us consider discretized counterpart of the free surface term in variational formulation (\ref{sys:vf-continuous}), namely
\begin{equation}\label{eq:discrete-curv}
f_{\textrm{fs},h}(\bfvarphi_h) = \int\limits_{\Sigma_h}\frac{\div_{\Sigma_h}\bfvarphi_h}{\noCa\noRe}\dS.
\end{equation}
It has been shown in \cite{ivancic20} that the polynomial order of $\bfvarphi_h$ restricted on $\Sigma_h$ cannot be higher than the order of basis function used for construction of $\Sigma_h$. Otherwise, spurious velocities may be generated in the neighborhood of the discrete boundary $\Sigma_h$. This condition restricts the choice of finite element space $\sV_h(\Omega_h)$ to spaces whose functions restricted to $\Sigma_h$ are of the same order as the basis functions for construction of $\Sigma_h$. Specially, if $\Sigma_h$ is polygonal, then restriction of $\bfvarphi_h$ to ${\Sigma_h}$ has to belong to space $[\fespaceP1(\Sigma_h)]^d$. This can be circumvented by decoupling the curvature evaluation from the main problem or by employing some stabilization techniques. Details can be found in \cite{ivancic20}.
\begin{remark}
It has been shown in \cite{hansbo15} that FEM for computing mean curvature vector on polygonal meshes via Laplace--Beltrami operator may fail due to instabilities. This issue is characteristic for 3D case only, i.e. for polygonal surfaces embedded in $\bbmR^3$.  They proposed simple but efficient stabilization procedure which consists of modifying the weak formulation for mean curvature vector. Such instabilities in curvature vector, in our experience, usually appear in saddle vertices of the mesh.

For the purpose of this work, we use directly formulation (\ref{eq:discrete-curv}) without stabilization, and experience no issues, as reported in Section~\ref{sec:numerical-validation}. However, curvature stabilization techniques should be considered in general. Curvature stabilization technique proposed in \cite{hansbo15} is straightforward to implement within the framework proposed in this work.
\end{remark}

\subsection{Choice of the finite element spaces}
It is well known that spaces for velocity and pressure cannot be chosen arbitrary but have to satisfy the so--called {\it Lady\v{z}enskaya--Babu\v{s}ka--Brezzi} (LBB) condition (see \cite{boffi13}). This is due to the {\it saddle--point} nature of the incompressible Navier--Stokes equations. The simplest choice for $\sV_h(\Omega_h)\times\sQ_h(\Omega_h)$ which satisfies LBB condition and condition imposed by discrete Laplace--Beltrami operator for curvature evaluation discussed above, is $[\fespaceminiP1(\Omega_h)]^d\times\fespaceP1(\Omega_h)$. Here, $\fespaceminiP1$ denotes the {\it mini}--elements, i.e. $\fespaceP1$ elements enriched with the cubic bubble function.

\subsection{Energy balance of the semi--discrete system}
In this subsection, we recall possible numerical sinks/sources of artificial energy coming solely due to the spatial discretization.

It is straightforward to derive the following property of the trilinear form $c(\dots)$:
\begin{equation}\label{property:trilinear}
c(\bfvarphi,\bfv;\bfu) = -c(\bfv,\bfvarphi;\bfu) + \int\limits_{\partial\Omega}(\bfvarphi\cdot\bfv)\bfu\cdot\bfn\dS - \int\limits_{\Omega} \bfvarphi\cdot\bfv\div\bfu\dbfx.
\end{equation}
Taking $\bfvarphi=\bfv$ and $\bfu=\bfv-\bfw$, and using $\div\bfv=0$ and $\bfw\cdot\bfn=\bfv\cdot\bfn$ in (\ref{property:trilinear}), it is easily obtained
\begin{equation}\label{property:trilinear-energy}
c(\bfv,\bfv;\bfv-\bfw) = \int\limits_\Omega \frac{1}{2}\vert\bfv\vert^2\div\bfw\dbfx.
\end{equation}
Property (\ref{property:trilinear-energy}) does not hold in general on the discrete level. Indeed, the assumption 
\[
\bfw_h\cdot\bfn=\bfv_h\cdot\bfn\textrm{ on }\Sigma_h
\]
holds in general only on semi--discrete level; temporal discretization may ruin this property (see Remark~\ref{rem:velocity-consistency} for some details). Furthermore, $\div\bfv_h=0$ holds only in variational sense, i.e.
\[
\int\limits_{\Omega_h}\chi_h\div\bfv_h\dbfx=0\;,\:\chi_h\in\sQ_h(\Omega_h)\;,\: \textrm{does not imply} \: \int\limits_{\Omega_h}\frac{1}{2}\vert\bfv_h\vert^2\div\bfv_h\dbfx=0
\]
since $\vert\bfv_h\vert^2\notin\sQ_h(\Omega_h)$ in general. Note that (\ref{property:trilinear-energy}) is a modification of the classic {\it skew symmetry} property of the trilinear form in Navier--Stokes equations posed on stationary domains. Consistent stabilization of the trilinear form is thus used in discrete variational formulation, namely
\begin{equation}
\begin{split}
& c_h(\bfvarphi_h,\bfv_h;\bfv_h-\bfw_h)\mapsto \tilde{c}_h(\bfvarphi_h,\bfv_h;\bfv_h-\bfw_h),\\
& \tilde{c}_h(\bfvarphi_h,\bfv_h;\bfv_h-\bfw_h) = \frac{1}{2}\Big( c_h(\bfvarphi_h,\bfv_h;\bfv_h-\bfw_h) - c_h(\bfv_h,\bfvarphi_h;\bfv_h-\bfw_h) \Big)\\
&\hspace{0.25\linewidth}+ \frac{1}{2}d_h(\bfvarphi_h,\bfv_h;\bfw_h).
\end{split}
\end{equation}
Trilinear form $\tilde{c}_h(\bfvarphi_h,\bfv_h;\bfv_h-\bfw_h)$ is consistent in sense that on the continuous level
\[
\tilde{c}(\bfvarphi,\bfv;\bfv-\bfw) = c(\bfvarphi,\bfv;\bfv-\bfw).
\]
We summarize the above discussion in the following lemma:
\begin{lem}
Consistent stabilization of discrete trilinear form $c_h(\bfvarphi_h,\bfv_h;\bfv_h-\bfw_h)$ denoted by $\tilde{c}_h(\bfvarphi_h,\bfv_h;\bfv_h-\bfw_h)$ and defined by
\begin{equation}
\begin{split}
\tilde{c}_h(\bfvarphi_h,\bfv_h;\bfv_h-\bfw_h) = & \frac{1}{2}\Big( c_h(\bfvarphi_h,\bfv_h;\bfv_h-\bfw_h) - c_h(\bfv_h,\bfvarphi_h;\bfv_h-\bfw_h) \Big) \\
& + \frac{1}{2}d_h(\bfvarphi_h,\bfv_h;\bfw_h),
\end{split}
\end{equation}
satisfies the discrete counterpart of property (\ref{property:trilinear-energy}), i.e.
\begin{equation}
\tilde{c}_h(\bfv_h,\bfv_h;\bfv_h-\bfw_h) = \int\limits_{\Omega_h}\frac{1}{2}\vert\bfv_h\vert^2\div\bfw_h\dbfx.
\end{equation}
\end{lem}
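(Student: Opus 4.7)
The plan is to obtain the identity by direct substitution of $\bfvarphi_h = \bfv_h$ into the definition of $\tilde{c}_h$. The stabilized form is written as the sum of a manifestly skew-symmetric piece in its first two arguments together with the scaled correction $\frac{1}{2}d_h(\cdot,\cdot;\bfw_h)$, so the verification collapses to two elementary observations.

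First, I would observe that when the test slot and the solution slot both hold $\bfv_h$, the antisymmetric piece
\[
\frac{1}{2}\Big( c_h(\bfv_h,\bfv_h;\bfv_h-\bfw_h) - c_h(\bfv_h,\bfv_h;\bfv_h-\bfw_h) \Big)
\]
vanishes identically. This cancellation does not invoke any integration-by-parts identity, nor pointwise incompressibility of $\bfv_h$, nor the velocity-compatibility condition $\bfw_h\cdot\bfn=\bfv_h\cdot\bfn$ at the discrete boundary. That is exactly why the stabilization was introduced: on the discrete level the analogue of property~(\ref{property:trilinear}) leaves a boundary term and a $\div\bfv_h$ volumetric remainder that one cannot discard, whereas the explicit antisymmetrization sidesteps both obstructions.

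Second, I would unpack the definition of $d_h$ to write
\[
\frac{1}{2}d_h(\bfv_h,\bfv_h;\bfw_h) = \frac{1}{2}\int\limits_{\Omega_h} \bfv_h\cdot\bfv_h\,\div\bfw_h\dbfx = \int\limits_{\Omega_h}\frac{1}{2}\vert\bfv_h\vert^2\,\div\bfw_h\dbfx,
\]
which is precisely the right-hand side of the claimed identity.

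No real obstacle is foreseen; the whole argument is a two-line computation. The substantive content of the lemma lies not in the proof but in the design of the stabilization itself: hard-wiring antisymmetry in the first two arguments and adding the correction $\frac{1}{2}d_h(\cdot,\cdot;\bfw_h)$ produces exactly the $\int_{\Omega_h}\frac{1}{2}\vert\bfv_h\vert^2\div\bfw_h\dbfx$ contribution needed to absorb the Reynolds-transport remainder in the discrete kinetic-energy balance (compare identity~(\ref{id:transient}) in Lemma~\ref{lem:kinetic}), while simultaneously agreeing with $c(\bfvarphi,\bfv;\bfv-\bfw)$ on the continuous level by virtue of property~(\ref{property:trilinear}).
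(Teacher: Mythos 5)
Your proof is correct and matches the paper's intent: the paper states this lemma as a summary of the preceding discussion and omits a formal proof precisely because the verification is the immediate two-line substitution you carry out (the antisymmetrized part vanishes identically at $\bfvarphi_h=\bfv_h$, and the $\frac{1}{2}d_h$ correction is by definition the claimed right-hand side). Your remarks on why no boundary term, incompressibility, or velocity-compatibility condition is needed accurately reflect the paper's motivation for introducing the stabilization.
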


Artificial energy sinks/sources may appear in transition to the discrete level during derivation of potential, wetting and free surface energies in Lemmas~\ref{lem:fse}--\ref{lem:potential}. The semi--discrete counterparts of aforementioned lemmas are derived for the general case.
\begin{lem}
Change in semi--discrete potential energy satisfies
\begin{equation}
\int\limits_{\Omega_h}\bfv_h\cdot\nabla\Phi_h\dbfx = \frac{\td}{\dt}E_{p,h}(t) - \int\limits_{\Omega_h}\Phi_h\div\bfv_h\dbfx + \int\limits_{\partial\Omega_h}\Phi_h(\bfv_h-\bfw_h)\cdot\bfn\dS.
\end{equation}
Specially, if $\Phi_h\in \sQ_h(\Omega_h)$ and $\bfw_h\cdot\bfn=\bfv_h\cdot\bfn$ on $\Sigma_h$, then the discrete counterpart of (\ref{eq:pot-energy}) is satisfied, i.e.
\begin{equation}
\frac{\td}{\dt}E_{p,h}(t) = \int\limits_{\Omega_h}\bfv_h\cdot\nabla\Phi_h\dbfx.
\end{equation}
\end{lem}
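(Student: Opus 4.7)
The plan is to mirror the proof of Lemma~\ref{lem:potential} at the semi-discrete level, but keep track of which terms fail to vanish because the continuous identities $\div\bfv=0$ and $\bfw\cdot\bfn=\bfv\cdot\bfn$ no longer hold pointwise after spatial discretization. The general identity is essentially a book-keeping exercise, and the ``special'' case then follows by invoking the two extra hypotheses exactly where they are needed.

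First I would apply the Gauss--Green formula on $\Omega_h$ to the smooth integrand $\bfv_h\cdot\nabla\Phi_h = \div(\Phi_h\bfv_h) - \Phi_h\div\bfv_h$, obtaining
\begin{equation*}
\int\limits_{\Omega_h}\bfv_h\cdot\nabla\Phi_h\dbfx
= \int\limits_{\partial\Omega_h}\Phi_h\bfv_h\cdot\bfn\dS - \int\limits_{\Omega_h}\Phi_h\div\bfv_h\dbfx.
\end{equation*}
Next I would split $\bfv_h = (\bfv_h-\bfw_h)+\bfw_h$ in the boundary term to isolate the piece that survives Reynolds transport. For the term containing $\bfw_h$, I would apply Reynolds transport theorem on $\Omega_h(t)$ exactly as in the continuous derivation of Lemma~\ref{lem:potential}: since $\Phi_h$ is carried along with the ALE map (so that $\partALEt\Phi_h = \bfw_h\cdot\nabla\Phi_h$), one gets
\begin{equation*}
\int\limits_{\partial\Omega_h}\Phi_h\bfw_h\cdot\bfn\dS
= \int\limits_{\Omega_h}\div(\Phi_h\bfw_h)\dbfx
= \frac{\td}{\dt}\int\limits_{\Omega_h}\Phi_h\dbfx
= \frac{\td}{\dt}E_{p,h}(t).
\end{equation*}
Rearranging yields the first (general) identity of the lemma. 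This is the purely algebraic part and carries no real obstacle.

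For the ``specially'' case, I would dispose of the two remainder terms separately. The boundary remainder $\int_{\partial\Omega_h}\Phi_h(\bfv_h-\bfw_h)\cdot\bfn\dS$ is split as an integral over $\Sigma_h$ and over $\Gamma_h$: on $\Sigma_h$ the hypothesis $\bfw_h\cdot\bfn=\bfv_h\cdot\bfn$ makes the integrand vanish, and on $\Gamma_h$ the Dirichlet condition $\bfv_h\cdot\bfn=0$ together with the fact that the mesh does not move through $\Gamma$ (so $\bfw_h\cdot\bfn=0$) again gives zero. The volumetric remainder $\int_{\Omega_h}\Phi_h\div\bfv_h\dbfx$ is handled by the assumption $\Phi_h\in\sQ_h(\Omega_h)$: the discrete incompressibility constraint $b(\bfv_h,\chi_h)=0$ for every $\chi_h\in\sQ_h(\Omega_h)$ applied with the admissible test function $\chi_h=\Phi_h$ yields precisely $\int_{\Omega_h}\Phi_h\div\bfv_h\dbfx=0$.

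The only subtle point, and the one place I would be careful, is the step invoking Reynolds transport theorem with $\Phi_h$ in place of the continuous potential $\Phi$: this requires $\Phi_h$ to be built once and for all on the reference configuration and pushed forward by the discrete ALE map, so that its ALE time derivative vanishes on $\kOmega_h$ and the Reynolds identity specializes to $\tfrac{\td}{\dt}\int_{\Omega_h}\Phi_h\dbfx=\int_{\partial\Omega_h}\Phi_h\bfw_h\cdot\bfn\dS$. Given the earlier remark that the discrete geometry is updated exactly by $\bfw_h$ (cf.\ (\ref{id:discrete-ALE-map})), this is consistent and no further discrete SCL is invoked here; the real sinks/sources of spurious energy will appear only after temporal discretization, which is treated separately.
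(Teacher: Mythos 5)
Your argument is correct and follows essentially the same route as the paper: write $\bfv_h\cdot\nabla\Phi_h=\div(\Phi_h\bfv_h)-\Phi_h\div\bfv_h$, integrate by parts, add and subtract $\Phi_h\bfw_h\cdot\bfn$ on the boundary, and identify $\int_{\partial\Omega_h}\Phi_h\bfw_h\cdot\bfn\dS$ with $\tfrac{\td}{\dt}\int_{\Omega_h}\Phi_h\dbfx$ via Reynolds transport; your explicit treatment of the two remainder terms in the special case (splitting $\partial\Omega_h$ into $\Sigma_h$ and $\Gamma_h$, and testing the discrete divergence constraint with $\chi_h=\Phi_h$) is exactly what the paper leaves implicit.

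One caveat on your closing remark: the Reynolds step requires $\Phi_h$ to be a \emph{fixed Eulerian} field, $\partial_t\Phi_h=0$, which is what you correctly use mid-proof when writing $\partALEt\Phi_h=\bfw_h\cdot\nabla\Phi_h$. Your final paragraph instead asks that $\Phi_h$ be defined on the reference configuration and pushed forward so that $\partALEt\Phi_h=0$; under that convention Reynolds gives $\tfrac{\td}{\dt}\int_{\Omega_h}\Phi_h\dbfx=\int_{\Omega_h}\Phi_h\div\bfw_h\dbfx=\int_{\partial\Omega_h}\Phi_h\bfw_h\cdot\bfn\dS-\int_{\Omega_h}\bfw_h\cdot\nabla\Phi_h\dbfx$, and the extra volume term does not vanish for the gravity potential, so the claimed specialization would fail. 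The correct reading (consistent with the paper, where $\Phi(\bfx)=\noFr^{-2}\bfk\cdot\bfx$ is linear and hence lies in $\sQ_h$ exactly) is that $\Phi_h$ is simply the time-independent spatial potential evaluated on the current mesh.
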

\begin{proof}
By straightforward calculation, it follows
\[
\begin{split}
\int\limits_{\Omega_h}\bfv_h\cdot\nabla\Phi_h\dbfx & = \int\limits_{\Omega_h}\Big( \div(\Phi_h\bfv_h) - \Phi_h\div\bfv_h \Big)\dbfx\\
& = \int\limits_{\partial\Omega_h}\Big( \Phi_h\bfv_h\cdot\bfn \pm  \Phi_h\bfw_h\cdot\bfn\Big)\dS - \int\limits_{\Omega_h}\Phi_h\div\bfv_h\dbfx\\
& = \frac{\td}{\dt}\int\limits_{\Omega_h}\Phi_h\dbfx - \int\limits_{\Omega_h}\Phi_h\div\bfv_h\dbfx + \int\limits_{\partial\Omega_h}\Big( \Phi_h(\bfv_h-\bfw_h)\cdot\bfn\Big)\dS,
\end{split}
\]
where Reynolds transport theorem was employed in the process.
\end{proof}

\begin{lem}\label{lem:semidiscr-wett}
Change in semi--discrete wetting energy satisfies
\begin{equation}
\int\limits_{\eta_h}\frac{-\cos\theta_s}{\noCa\noRe}\bfm_{\partial\Gamma_h}\cdot\bfv_h\ds = \frac{\td}{\dt}E_{w,h}(t) + \int\limits_{\partial\Gamma_h}\div_{\Gamma_h}\left[\frac{-\cos\theta_s}{\noCa\noRe}\big(\PGh\bfv_h-\bfw_h\big)\right]\dS.
\end{equation}
Specially, if $\bfw_h\cdot\bfn=\bfv_h\cdot\bfn$ on $\partial\Omega_h$ and $\bfv_h\cdot\bfn=0$ on $\Gamma_h$, then the discrete counterpart of (\ref{eq:wett-energy}) is satisfied, i.e.
\begin{equation}
\frac{\td}{\dt}E_{w,h}(t) = \int\limits_{\eta_h}\frac{-\cos\theta_s}{\noCa\noRe}\bfm_{\partial\Gamma_h}\cdot\bfv_h\dS.
\end{equation}
\end{lem}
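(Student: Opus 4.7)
The plan is to mirror the continuous derivation in Lemma~\ref{lem:wett}, making explicit each geometric identity that was absorbed into the hypotheses on the continuous level. The argument splits naturally into three steps: a tangential projection onto $\Gamma_h$, an application of the surface Stokes theorem, and finally the generalized Reynolds transport theorem for co-dimension one manifolds (Theorem~\ref{thm:generalizedRTT}).

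First I would observe that, because the co-normal $\bfm_{\partial\Gamma_h}$ lies in the tangent plane of $\Gamma_h$, the integrand over $\eta_h$ is unchanged under $\bfv_h\mapsto\PGh\bfv_h$. Since the resulting field $\frac{-\cos\theta_s}{\noCa\noRe}\PGh\bfv_h$ is tangent to $\Gamma_h$, the surface divergence theorem on the polygonal pair $(\Gamma_h,\eta_h)$ (with no mean-curvature contribution because $\Gamma_h$ is flat) expresses the line integral over $\eta_h$ as $\int_{\Gamma_h}\div_{\Gamma_h}[\frac{-\cos\theta_s}{\noCa\noRe}\PGh\bfv_h]\dS$. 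Adding and subtracting $\bfw_h$ inside the divergence then produces two terms, one involving $\bfw_h$ alone and one involving the discrepancy $\PGh\bfv_h-\bfw_h$; the latter is exactly the residual on the right-hand side of the claim.

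The second step identifies the $\bfw_h$ contribution with $\frac{\td}{\dt}E_{w,h}(t)$. Since the solid support is a fixed plane, the ALE map must keep $\Gamma_h(t)$ inside that plane, which forces $\bfw_h\cdot\bfn=0$ on $\Gamma_h$, so $\bfw_h$ is already tangent to $\Gamma_h$. By Remark~\ref{rem:theta} the datum $\theta_s$ depends only on the spatial variable $\bfx_\Gamma$ and carries no explicit time dependence. Applying Theorem~\ref{thm:generalizedRTT} to the evolving manifold $\Gamma_h(t)$ then rewrites $\int_{\Gamma_h}\div_{\Gamma_h}[\frac{-\cos\theta_s}{\noCa\noRe}\bfw_h]\dS$ as $\frac{\td}{\dt}\int_{\Gamma_h(t)}\frac{-\cos\theta_s}{\noCa\noRe}\dS$, which is $\frac{\td}{\dt}E_{w,h}(t)$ by definition. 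Combining this with the decomposition above yields the general identity.

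For the special case I would show that the residual vanishes. One more application of Stokes turns it into $\int_{\eta_h}\frac{-\cos\theta_s}{\noCa\noRe}\bfm_{\partial\Gamma_h}\cdot(\PGh\bfv_h-\bfw_h)\ds$. The Dirichlet condition $\bfv_h\cdot\bfn=0$ on $\Gamma_h$ reduces $\PGh\bfv_h$ to $\bfv_h$, while the compatibility $\bfw_h\cdot\bfn=\bfv_h\cdot\bfn$ on $\partial\Omega_h$ forces $\bfv_h-\bfw_h$ to have vanishing components along both $\bfn_\Sigma$ and $\bfn_\Gamma$ on $\eta_h$; since those two normals span the orthogonal complement of $\bft_\eta$, the difference is parallel to $\bft_\eta$ and hence orthogonal to $\bfm_{\partial\Gamma_h}$, so the line integral vanishes. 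I expect the main obstacle to be not a hard calculation but careful bookkeeping: keeping track of which tangential projection lives on which co-dimension-one manifold, and confirming that the implicit kinematic constraint $\bfw_h\cdot\bfn=0$ on $\Gamma_h$ is consistent with the discrete ALE construction (\ref{eq:discre-mesh-vel}).
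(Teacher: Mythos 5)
Your proposal is correct and follows essentially the same route as the paper: project tangentially, apply the surface divergence theorem on the flat $\Gamma_h$, insert $\pm\bfw_h$, and identify the $\bfw_h$ contribution with $\frac{\td}{\dt}E_{w,h}$ via the co--dimension--one Reynolds transport theorem. Your explicit verification that the residual vanishes in the special case (converting it back to a line integral over $\eta_h$ and using that $\bfv_h-\bfw_h$ is parallel to $\bft_\eta$ there) is a detail the paper leaves unstated, but it is consistent with the intended argument.
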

\begin{proof}
Employing the generalization of Reynolds transport theorem for manifolds of co--dimension $1$ while using $\theta_s=\theta_s(\bfx_\Gamma)$, by straightforward calculation it is obtained
\begin{equation}
\begin{split}
&\int\limits_{\eta_h}\frac{-\cos\theta_s}{\noCa\noRe}\bfm_{\partial\Gamma_h}\cdot\bfv_h\ds  = \int\limits_{\Gamma_h}\div_{\Gamma_h}\left[\frac{-\cos\theta_s}{\noCa\noRe}\big(\PGh\bfv_h \pm \bfw_h\big) \right]\dS\\
 &\hspace{0.2\linewidth} =\frac{\td}{\dt}\int\limits_{\Gamma_h}\frac{-\cos\theta_s}{\noCa\noRe}\dS + \int\limits_{\Gamma_h}\div_{\Gamma_h}\left[\frac{-\cos\theta_s}{\noCa\noRe}\big(\PGh\bfv_h-\bfw_h\big)\right]\dS.
\end{split}
\end{equation}
\end{proof}

\begin{lem}\label{lem:semidiscr-fs}
Change in semi--discrete free surface energy satisfies
\begin{equation}
\int\limits_{\Sigma_h}\frac{\div_{\Sigma_h}\bfv_h}{\noCa\noRe}\dS = \frac{\td}{\dt} E_{fs,h}(t) + \int\limits_{\Sigma_h}\frac{\div_{\Sigma_h}(\bfv_h-\bfw_h)}{\noCa\noRe}\dS.
\end{equation} 
Specially, if $\bfw_h\cdot\bfn=\bfv_h\cdot\bfn$ on $\Sigma_h$, then the discrete counterpart of (\ref{eq:fs-energy}) is satisfied, i.e.
\begin{equation}
\frac{\td}{\dt} E_{fs,h}(t) = \int\limits_{\Sigma_h}\frac{\div_{\Sigma_h}\bfv_h}{\noCa\noRe}\dS.
\end{equation}
\end{lem}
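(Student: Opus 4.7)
The plan is to mirror the continuous argument of Lemma~\ref{lem:fse} at the semi--discrete level. I would first split the left--hand side by adding and subtracting $\bfw_h$ inside the surface divergence,
\begin{equation*}
\int\limits_{\Sigma_h}\frac{\div_{\Sigma_h}\bfv_h}{\noCa\noRe}\dS = \int\limits_{\Sigma_h}\frac{\div_{\Sigma_h}\bfw_h}{\noCa\noRe}\dS + \int\limits_{\Sigma_h}\frac{\div_{\Sigma_h}(\bfv_h-\bfw_h)}{\noCa\noRe}\dS,
\end{equation*}
and then identify the first integral on the right with $\tfrac{\td}{\dt}E_{fs,h}(t)$. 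This identification is precisely where the generalization of Reynolds transport theorem for manifolds of co--dimension one (Theorem~\ref{thm:generalizedRTT}) enters: since $\Sigma_h(t)$ is transported by the ALE velocity $\bfw_h$ and the integrand $1/(\noCa\noRe)$ is time--independent (so its material surface derivative vanishes), the theorem yields $\tfrac{\td}{\dt}E_{fs,h}(t)=\int_{\Sigma_h}\div_{\Sigma_h}\bfw_h/(\noCa\noRe)\dS$. Substituting back gives the first (general) identity of the lemma.

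For the special case in which $\bfw_h\cdot\bfn=\bfv_h\cdot\bfn$ on $\Sigma_h$, I would show the correction integral vanishes. The hypothesis makes $\bfv_h-\bfw_h$ purely tangential to $\Sigma_h$, so the decomposition $\bfv_h-\bfw_h = P_{\Sigma_h}(\bfv_h-\bfw_h) + ((\bfv_h-\bfw_h)\cdot\bfn)\bfn$ collapses to its tangential part and the mean--curvature contribution to $\div_{\Sigma_h}(\bfv_h-\bfw_h)$ drops out. Applying the surface divergence theorem then reduces the correction term to a boundary integral along $\eta_h=\partial\Sigma_h$ in the co--normal direction $\bfm_{\partial\Sigma_h}$; I would conclude by observing that, along $\eta_h$, $\bfv_h-\bfw_h$ lies in $\bfT\Sigma_h\cap\bfT\Gamma_h=\mathrm{span}(\bft_{\eta_h})$ and $\bft_{\eta_h}\perp\bfm_{\partial\Sigma_h}$, so the integrand vanishes pointwise on $\eta_h$.

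The main obstacle I anticipate is cleanly justifying the vanishing of this contact--line boundary term: the hypothesis $\bfw_h\cdot\bfn=\bfv_h\cdot\bfn$ on $\Sigma_h$ alone is not enough, and one must additionally invoke the no--penetration conditions on $\Gamma_h$, namely $\bfv_h\cdot\bfn_{\Gamma_h}=0$ (which is built into the discrete test space $\sV_h(\Omega_h)$) and $\bfw_h\cdot\bfn_{\Gamma_h}=0$ (which is a standard constraint on the ALE map, ensuring that $\Gamma_h$ remains in the supporting surface). Together, these imply that both $\bfv_h$ and $\bfw_h$ are tangent to $\Gamma_h$ along $\eta_h$, so $\bfv_h-\bfw_h$ is tangent to both $\Sigma_h$ and $\Gamma_h$ there, hence parallel to $\bft_{\eta_h}$. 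This subtle coupling between the hypothesis on $\Sigma_h$ and the boundary conditions on $\Gamma_h$ is the only non--routine ingredient; everything else is a direct semi--discrete transcription of Lemma~\ref{lem:fse}.
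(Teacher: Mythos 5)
Your proof is correct and takes essentially the route the paper intends: the paper omits the proof of this lemma, saying only that it is analogous to Lemma~\ref{lem:semidiscr-wett}, and that analogous argument is precisely your add--and--subtract of $\bfw_h$ followed by the generalized Reynolds transport theorem (Theorem~\ref{thm:generalizedRTT}) applied to the constant integrand $1/(\noCa\noRe)$ transported by $\bfw_h$. Your closing observation is also well taken: the hypothesis $\bfw_h\cdot\bfn=\bfv_h\cdot\bfn$ on $\Sigma_h$ alone only removes the curvature contribution $-2H(\bfv_h-\bfw_h)\cdot\bfn$, and the residual contact--line integral $\int_{\eta_h}(\bfv_h-\bfw_h)\cdot\bfm_{\partial\Sigma_h}\ds$ vanishes only once one also invokes the tangency of $\bfv_h$ and $\bfw_h$ to $\Gamma_h$ --- exactly the extra conditions that the analogous Lemma~\ref{lem:semidiscr-wett} does state ($\bfw_h\cdot\bfn=\bfv_h\cdot\bfn$ on $\partial\Omega_h$ and $\bfv_h\cdot\bfn=0$ on $\Gamma_h$) and that hold by construction of $\sV_h$ and of the ALE velocity, so the special case should be read with those hypotheses carried over.
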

\begin{proof}
Proof is analogous to that of Lemma~\ref{lem:semidiscr-wett} and thus omitted.
\end{proof}

\subsection{Temporal discretization}
Semi--discrete counterpart (discrete in space) of variational formulation (\ref{sys:vf-continuous}) reads
\begin{equation}\label{sys:vf-semidiscrete}
\begin{split}
&\textrm{find }\bfv_h\in\sL^2(0,T;\sH^1_{\bfn,\Gamma_h}(\Omega_h))\textrm{ and }p_h\in\sL^2(0,T;\sL^2(\Omega_h))\textrm{ such that}\\
& \forall\:(\bfvarphi_h,\chi_h)\in\sV_h(\Omega_h)\times\sQ(\Omega_h)\\
& 0 = \frac{\td}{\dt}m_h(\bfvarphi_h,\bfv_h) - \frac{1}{2}d_h(\bfv_h,\bfvarphi_h;\bfw_h) \\
&\hspace{0.05\linewidth} + \frac{1}{2}c_h(\bfvarphi_h,\bfv_h;\bfv_h-\bfw_h) - \frac{1}{2}c_h(\bfv_h,\bfvarphi_h;\bfv_h-\bfw_h)\\
&\hspace{0.05\linewidth}+ a_h(\bfvarphi_h,\bfv_h) - b_h(\bfvarphi_h,p_h)\\
&\hspace{0.05\linewidth}+ r_h(\bfvarphi_h,\bfv_h) - r_h(\bfvarphi_h,\bfw_h) - f_{\textrm{cl},h}(\bfvarphi_h) + f_{\textrm{st},h}(\bfvarphi_h) + f_{\textrm{gs},h}(\bfvarphi)
\end{split}
\end{equation}  
In the above semi--discrete formulation we replaced the trilinear form $c_h(\dots)$ by its stabilized counterpart $\tilde{c}_h(\dots)$. Furthermore, we replaced the forcing term $f_{\textrm{g},h}(\dots)$ by $f_{\textrm{gs},h}(\dots)$. This last trick can be employed for an arbitrary conservative force. It proved to be very useful for energy stability after temporal discretization. Replacing $f_{\textrm{g},h}(\dots)$ by $f_{\textrm{gs},h}(\dots)$ corresponds to replacing total with dynamic pressure so that gravity appears only in the boundary conditions:
\begin{equation}
\begin{split}
& p_\textrm{dyn}  = p - \frac{1}{\noFr^2}\bfk\cdot\bfx,\\
& -p_\textrm{dyn}\bbmI + \frac{1}{\noRe}\bbmD(\bfv) = \frac{1}{\noCa\noRe}\Delta_\Sigma\bfx_\Sigma - \frac{1}{\noFr^2}(\bfk\cdot\bfx)\bfn\;,\textrm{ on }\Sigma\;,
\end{split}
\end{equation}
where $p_\textrm{dyn}$ denotes the dynamic pressure. In the rest of the paper, we simply write $p$ keeping in mind that it is actually the dynamic pressure.

For temporal discretization a variation of implicit Euler method is employed. Integrating (\ref{sys:vf-semidiscrete}) from $t_n$ to $t_{n+1}$, we obtain
\begin{equation}\label{sys:vf-discrete}
\begin{split}
0 =\; & m_h^{n+1}(\bfvarphi_h,\bfv_h^{n+1}) - m_h^{n}(\bfvarphi_h,\bfv_h^{n}) - \intInn1 \frac{1}{2}d_h(\bfv_h^{n+1},\bfvarphi_h;\bfw_h^{n+1})\\
&\hspace{0.025\linewidth} + \Delta t\;\frac{1}{2}c_h^{n+1}(\bfvarphi_h,\bfv_h^{n+1};\bfv_h^{n+1}-\bfw_h^{n+1}) - \Delta t\;\frac{1}{2}c_h^{n+1}(\bfv_h^{n+1},\bfvarphi_h;\bfv_h^{n+1}-\bfw_h^{n+1})\\
&\hspace{0.025\linewidth} + \Delta t\; a_h^{n+1}(\bfvarphi_h,\bfv_h^{n+1}) - \Delta t\; b_h^{n+1}(\bfvarphi_h,p_h^{n+1})\\
&\hspace{0.025\linewidth} + \Delta t\; r_h^{n+1}(\bfvarphi_h,\bfv_h^{n+1}) - \Delta t\; r_h^{n+1}(\bfvarphi_h,\bfw_h^{n+1}) - \intInn1 f_{\textrm{cl},h}(\bfvarphi_h) + \intInn1 f_{\textrm{st},h}(\bfvarphi_h)\\
&\hspace{0.025\linewidth} + \intInn1 f_{\textrm{gs},h}(\bfvarphi_h),
\end{split}
\end{equation}
where $\intInn1$ denotes a quadrature rule for $\int_{t_n}^{t_{n+1}}\dots\dt$ which has to be chosen carefully. Quadrature rule $\intInn1$ plays the central role in discrete energy estimates. Indeed, as it is shown in Proposition~\ref{prop:discrete-energy-balance}, $\intInn1$ should ideally satisfy the following discrete space conservation laws:
\begin{equation}\label{prop:assumptions1}
\begin{split}
& \intInn1 \int\limits_{\Omega_h}\frac{1}{2}\vert\bfv_h^{n+1}\vert^2\div\bfw_h^{n+1}\dbfx = \int\limits_{\Omega_h^{n+1}}\frac{1}{2}\vert\bfv_h^{n+1}\vert^2\dbfx - \int\limits_{\Omega_h^{n}}\frac{1}{2}\vert\bfv_{h,n}^{n+1}\vert^2\dbfx\;,\\
& \intInn1 \int\limits_{\Sigma_h} \frac{\div_{\Sigma_h}\bfw_h^{n+1}}{\noCa\noRe}\dS = \int\limits_{\Sigma_h^{n+1}} \frac{1}{\noCa\noRe}\dS - \int\limits_{\Sigma_h^{n}} \frac{1}{\noCa\noRe}\dS\;,\\
& \intInn1 \int\limits_{\eta_h}\frac{\cos\theta_s}{\noCa\noRe}\bfm_{\partial\Gamma_h}\cdot\bfw_h^{n+1}\ds = \int\limits_{\Gamma_h^{n+1}}\frac{\cos\theta_s}{\noCa\noRe}\dS - \int\limits_{\Gamma_h^{n}}\frac{\cos\theta_s}{\noCa\noRe}\dS\;,\\
& \intInn1 \int\limits_{\partial\Omega_h} \Phi_h\bfw_h^{n+1}\cdot\bfn\dS = \int\limits_{\Omega_h^{n+1}}\Phi_h\dS - \int\limits_{\Omega_h^{n}}\Phi_h\dS.
\end{split}
\end{equation} 
In practice, $\intInn1$ may be chosen differently for the different terms in (\ref{prop:assumptions1}) -- for more details see also (\ref{prop:assumptions1-odt}). Keeping that in mind, we slightly abuse the notation and use the same symbol for any case. Whether such quadrature rule is possible to chose is another manner, discussed in more detail in Remark~\ref{rem:questionable-terms}. The formal definition of SCL and some insights on how to satisfy it on the discrete level are given in \ref{apx:dSCL}.

\subsection{Discrete energy inequality}
Let us define the discrete counterpart of the energy balance (\ref{prop:ceb}) by
\begin{equation}\label{eq:discr-count-eb}
{\cal E}_{\Delta t,h} = \frac{E_{k,h}^{n+1} - E_{k,h}^{n}}{\Delta t} + P_{v,h}^{n+1} + \frac{E_{fs,h}^{n+1} - E_{fs,h}^{n}}{\Delta t} + \frac{E_{w,h}^{n+1} - E_{w,h}^{n}}{\Delta t} + P_{fr,h}^{n+1} + \frac{E_{p,h}^{n+1} - E_{p,h}^{n}}{\Delta t}.
\end{equation} 
Above, additional subscript $h$ and superscripts $n$ and $n+1$ indicate the discrete counterparts of quantities defined in (\ref{eq:energies-notation}).
\begin{prop}\label{prop:discrete-energy-balance}
Assume that quadrature rule $\intInn1$ in formulation (\ref{sys:vf-discrete}) is such that discrete SCLs of co--dimension $0$ and $1$ are satisfied, i.e. that assumptions (\ref{prop:assumptions1}) hold. Furthermore, assume 
\begin{equation}\label{prop:assumptions2}
\begin{split}
& \bfw_{h,t}^{n+1}\cdot\bfn=\bfv_{h,t}^{n+1}\cdot\bfn \textrm{ on } \partial\Omega_h(t)\;,\:t\in[t_n,t_{n+1}].
\end{split}
\end{equation}
Then, discrete variational formulation (\ref{sys:vf-discrete}) is stable in energy norm in sense that the following energy inequality holds:
\begin{equation}\label{eq:discrete-energy-inequality}
{\cal E}_{\Delta t,h} \leq 0.
\end{equation}
\end{prop}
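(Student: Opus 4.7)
The plan is to mirror the continuous proof of Proposition~\ref{prop:ceb} at the fully discrete level: test the discrete variational formulation (\ref{sys:vf-discrete}) with $(\bfvarphi_h, \chi_h) = (\bfv_h^{n+1}, p_h^{n+1})$ and match each resulting term against one of the increments in (\ref{eq:discr-count-eb}). The pressure contribution $b_h^{n+1}(\bfv_h^{n+1}, p_h^{n+1})$ is eliminated by the discrete incompressibility constraint picked up from testing with $\chi_h = p_h^{n+1}$. The stabilized convective term, once tested on the diagonal, is automatically skew, so the two $c_h^{n+1}$ contributions cancel and the only surviving piece of the convection/mass block is $-\tfrac{1}{2}\intInn1 d_h$, which will combine with the mass difference. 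The viscous and friction contributions $a_h^{n+1}(\bfv_h^{n+1}, \bfv_h^{n+1})$ and $r_h^{n+1}(\bfv_h^{n+1}, \bfv_h^{n+1} - \bfw_h^{n+1})$ are immediately identified with $\Delta t\,P_{v,h}^{n+1}$ and $\Delta t\,P_{fr,h}^{n+1}$.

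For the geometric source terms I would invoke the semi-discrete Lemmas~\ref{lem:semidiscr-wett}--\ref{lem:semidiscr-fs} pointwise in time on $[t_n, t_{n+1}]$ and then close them with the corresponding identity from (\ref{prop:assumptions1}). The compatibility assumption (\ref{prop:assumptions2}) on $\Sigma_h$, paired with (\ref{prop:assumptions1})$_2$, collapses $\intInn1 f_{\textrm{st},h}(\bfv_h^{n+1})$ to $E_{fs,h}^{n+1} - E_{fs,h}^n$. An analogous argument, using in addition $\bfw_h^{n+1}\cdot\bfn = \bfv_h^{n+1}\cdot\bfn = 0$ on $\Gamma_h$ together with (\ref{prop:assumptions1})$_3$, turns $-\intInn1 f_{\textrm{cl},h}(\bfv_h^{n+1})$ into $E_{w,h}^{n+1} - E_{w,h}^n$. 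For the gravity contribution, (\ref{prop:assumptions2}) on $\partial\Omega_h$ combined with (\ref{prop:assumptions1})$_4$ yields $\intInn1 f_{\textrm{gs},h}(\bfv_h^{n+1}) = E_{p,h}^{n+1} - E_{p,h}^n$; the reformulation via dynamic pressure is exactly what relocates gravity into the boundary integral $f_{\textrm{gs},h}$ and lets us avoid requiring $\Phi_h \in \sQ_h(\Omega_h)$.

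The kinetic pairing is the only source of slack, and it is where the equality of Proposition~\ref{prop:ceb} degrades to the inequality (\ref{eq:discrete-energy-inequality}). With $\bfvarphi_h = \bfv_h^{n+1}$ the mass difference reads $\int_{\Omega_h^{n+1}}|\bfv_h^{n+1}|^2 \dbfx - \int_{\Omega_h^n} \bfv_{h,n}^{n+1}\cdot\bfv_h^n \dbfx$, where the cross integral implicitly lives on $\Omega_h^n$ via the pullback $\bfv_{h,n}^{n+1} = \bfv_h^{n+1}\circ\ALE_{h,n}^{[n+1,n]}$. Using (\ref{prop:assumptions1})$_1$ to rewrite $-\tfrac{1}{2}\intInn1 d_h$ as $-\tfrac{1}{2}\int_{\Omega_h^{n+1}}|\bfv_h^{n+1}|^2\dbfx + \tfrac{1}{2}\int_{\Omega_h^n}|\bfv_{h,n}^{n+1}|^2\dbfx$ reduces the mass/convection block to
\[
\tfrac{1}{2}\int_{\Omega_h^{n+1}}|\bfv_h^{n+1}|^2 \dbfx + \tfrac{1}{2}\int_{\Omega_h^n}|\bfv_{h,n}^{n+1}|^2 \dbfx - \int_{\Omega_h^n} \bfv_{h,n}^{n+1}\cdot\bfv_h^n \dbfx,
\]
which, by the pointwise estimate $\bfv_{h,n}^{n+1}\cdot\bfv_h^n \leq \tfrac{1}{2}|\bfv_{h,n}^{n+1}|^2 + \tfrac{1}{2}|\bfv_h^n|^2$, is bounded below by $E_{k,h}^{n+1} - E_{k,h}^n$, with nonnegative defect $\tfrac{1}{2}\int_{\Omega_h^n}|\bfv_{h,n}^{n+1} - \bfv_h^n|^2\dbfx$ playing the role of the usual implicit-Euler numerical dissipation. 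Collecting all contributions and dividing by $\Delta t$ yields (\ref{eq:discrete-energy-inequality}).

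The main obstacle I anticipate is the bookkeeping rather than any sharp analytic inequality: each integral in (\ref{sys:vf-discrete}) lives on a different mesh configuration than its neighbour, so cross terms such as $m_h^n(\bfv_h^{n+1}, \bfv_h^n)$ implicitly carry pullbacks via $\ALE_{h,n}^{[n+1,n]}$, and one must carefully verify that a single family of quadrature rules $\intInn1$ realizes all four SCLs of (\ref{prop:assumptions1}) simultaneously --- as the text notes just before the proposition, in practice different rules may be needed term by term, so the precise identification of each surface/volume term with its corresponding energy increment has to be performed with care.
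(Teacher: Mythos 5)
Your proposal is correct and follows essentially the same route as the paper's proof: test with $\bfvarphi_h=\bfv_h^{n+1}$, cancel the $d_h$ term against the Reynolds-transport contribution of the mass difference via (\ref{prop:assumptions1})$_1$, absorb the cross term with Young's inequality (yielding exactly the Euler dissipation defect the paper later records in (\ref{eq:euler-dissipation})), and close the surface, contact-line and gravity terms with the remaining SCLs and assumption (\ref{prop:assumptions2}) following Lemmas~\ref{lem:semidiscr-wett}--\ref{lem:semidiscr-fs}. The only cosmetic difference is the order in which the mass/convection block is assembled; the resulting inequality is identical to the paper's (\ref{ineq:transient-term}).
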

\begin{proof}
The proof is rather technical but quite straightforward. We write it down concisely. 

Let us take $\bfvarphi_h=\bfv_h^{n+1}$ in discrete formulation (\ref{sys:vf-discrete}). Then, using inequality $ab\leq \frac{1}{2}a^2+\frac{1}{2}b^2$ and employing Reynolds transport theorem in the process, it may be estimated 
\begin{equation}\label{ineq:transient-term}
\begin{split}
 \int\limits_{\Omega_h^{n+1}}\vert\bfv_h^{n+1}\vert^2\dbfx - \int\limits_{\Omega_h^{n}}\bfv_{h,n}^{n+1}\cdot\bfv_h^{n}\dbfx & \geq \int\limits_{\Omega_h^{n+1}}\frac{1}{2}\vert\bfv_h^{n+1}\vert^2\dbfx - \int\limits_{\Omega_h^{n}}\frac{1}{2}\vert\bfv_h^{n}\vert^2\dbfx \\
& + \int\limits_{t_n}^{t_{n+1}} \left(\; \int\limits_{\Omega_h}\frac{1}{2}\vert\bfv_h^{n+1}\vert^2 \div\bfw_h^{n+1}\dbfx \right) \dt.
\end{split}
\end{equation}
Hence, the last term in above inequality (\ref{ineq:transient-term}) cancels out with $\intInn1 \frac{1}{2}d_h(\bfv_h^{n+1},\bfv_h^{n+1};\bfw_h^{n+1})$ due to assumption (\ref{prop:assumptions1})$_1$. Terms involving trilinear form $c_h^{n+1}(\dots)$, $a_h^{n+1}(\dots)$, $b_h^{n+1}(\dots)$ and $r_h^{n+1}(\dots)$ are obtained by simply inserting $\bfvarphi_h=\bfv_h^{n+1}$. Furthermore,
\begin{equation}
\intInn1 f_{\textrm{cl},h}(\bfv_h^{n+1}) = \int\limits_{\Gamma_h^{n+1}}\frac{\cos\theta_s}{\noCa\noRe}\dS - \int\limits_{\Gamma_h^{n}}\frac{\cos\theta_s}{\noCa\noRe}\dS
\end{equation}
due to assumptions (\ref{prop:assumptions1})$_3$ and (\ref{prop:assumptions1})$_2$ (formal derivation follows that of Lemma~\ref{lem:semidiscr-wett}). Similarly, due to assumptions (\ref{prop:assumptions1})$_2$ and (\ref{prop:assumptions1})$_2$, following the derivation from Lemma~\ref{lem:semidiscr-fs} we obtain
\begin{equation}
\intInn1 f_{\textrm{st},h}(\bfv_h^{n+1}) = \int\limits_{\Sigma^{n+1}}\frac{1}{\noCa\noRe}\dS - \int\limits_{\Sigma^{n}}\frac{1}{\noCa\noRe}\dS.
\end{equation}
Finally, once again applying Reynolds transport theorem and using assumptions (\ref{prop:assumptions1})$_4$ and (\ref{prop:assumptions2}), we obtain
\begin{equation}
\begin{split}
\intInn1 f_{\textrm{gs},h}(\bfv_h^{n+1}) =  \int\limits_{\Omega_h^{n+1}}\Phi_h\dS - \int\limits_{\Omega_h^{n}}\Phi_h\dS.
\end{split}
\end{equation}
Summing it all up, discrete energy estimate (\ref{eq:discrete-energy-inequality}) follows.
\end{proof}
\begin{remark}
Note that if we work with forcing term in form $f_{\textrm{g},h}(\dots)$ rather than incorporating it into the pressure term, we are left with term of spurious energy of order $\Delta t$ in Proposition~\ref{prop:discrete-energy-balance}. Indeed,
\begin{equation}
\begin{split}
\int\limits_{t_n}^{t_{n+1}} \int\limits_{\Omega_h} \nabla\Phi\cdot\bfv^{n+1}_h\dbfx  \dt = E_{p,h}^{n+1} - E_{p,h}^{n} - \int\limits_{t_n}^{t_{n+1}} \int\limits_{\Omega_h} \Phi_h\div\bfv^{n+1}_h\dbfx\dt
\end{split}
\end{equation}
and, in general (see Remark~\ref{rem:div-vh}), 
\[
\int\limits_{t_n}^{t_{n+1}} \int\limits_{\Omega_h} \Phi_h\div\bfv^{n+1}_h\dbfx\dt \neq 0.
\]
\end{remark} 

\begin{remark}
Energy inequality (\ref{eq:discrete-energy-inequality}) is the discrete counterpart of energy balance (\ref{eq:energy-balance}). It shows that, under the assumptions of Proposition~\ref{prop:discrete-energy-balance}, time discretization does not introduce spurious energy into the system. In other words, discrete formulation (\ref{sys:vf-discrete}) is stable in the energy norm. See also Remark~\ref{rem:questionable-terms} for more details on validity of the assumptions in Proposition~\ref{prop:discrete-energy-balance}.
\end{remark}

\begin{remark}\label{rem:questionable-terms}
An important question related to Proposition~\ref{prop:discrete-energy-balance} is whether it is possible to chose quadrature rule $\intInn1$ such that assumptions (\ref{prop:assumptions1}) to hold. It was shown in \cite{ivancic19} that this is indeed possible for identity (\ref{prop:assumptions1})$_1$. Strategy derived in \cite{ivancic19} can be straightforwardly extended for identity (\ref{prop:assumptions1})$_4$. A concise derivation is given in \ref{apx:dSCL}.

The other two identities in (\ref{prop:assumptions1}), namely, identities (\ref{prop:assumptions1})$_2$ and (\ref{prop:assumptions1})$_3$, cannot be satisfied exactly in a general case, at least to the best of our knowledge. However, the strategy derived in \cite{ivancic19} may still be employed to some extent. It allows us to conveniently employ arbitrary quadrature rules and, at least in theory, to satisfy identities (\ref{prop:assumptions1})$_2$ and (\ref{prop:assumptions1})$_3$ up to desired order in $\Delta t$. More precisely, we may assume
\begin{equation}\label{prop:assumptions1-odt}
\begin{split}
& \intInn1 \int\limits_{\Sigma_h} \frac{\div_{\Sigma_h}\bfw_h^{n+1}}{\noCa\noRe}\dS = \int\limits_{\Sigma_h^{n+1}} \frac{1}{\noCa\noRe}\dS - \int\limits_{\Sigma_h^{n}} \frac{1}{\noCa\noRe}\dS + \;o(\Delta t^k)\;,\\
& \intInn1 \int\limits_{\eta_h}\frac{-\cos\theta_s}{\noCa\noRe}\bfm_{\partial\Gamma_h}\cdot\bfw_h^{n+1}\ds = \int\limits_{\Gamma_h^{n+1}}\frac{\cos\theta_s}{\noCa\noRe}\dS - \int\limits_{\Gamma_h^{n}}\frac{\cos\theta_s}{\noCa\noRe}\dS+ \;o(\Delta t^k)\;,
\end{split}
\end{equation} 
where $k$ depends on the choice of quadrature formula $\intInn1$. More details are given in \ref{apx:dSCL}.
\end{remark}
Taking the discussion in Remark~\ref{rem:questionable-terms} into account, energy inequality (\ref{eq:discrete-energy-inequality}) in Proposition~\ref{prop:discrete-energy-balance} may be restated as
\begin{equation}
{\cal E}_{\Delta t, h} + o(\Delta t^k) \leq 0,
\end{equation}
where $k$ depends on the choice of quadrature rule $\intInn1$.

\subsection{Linearization and mesh updating procedure}
As mentioned in Section~\ref{sec:ALE}, an implicit approach is employed for the geometry evolution. The ALE map is constructed via the identity
\begin{equation}\label{eq:practical-ALE-construct}
\begin{split}
&\ALE_h^{[n,n+1]}(\bfx^n) = \bfx^n + t\bfw_{h,n}^{n+1}(\bfx^n)\;,\:t\in[0,\Delta t]\;,\\
&\Omega_h^{n+1}\ni\bfx^{n+1} = \bfx^n + \Delta t\;\bfw_{h,n}^{n+1}(\bfx^n)\;,
\end{split}
\end{equation}
where mesh velocity $\bfw_h^{n+1}$ is constructed by solving
\begin{equation}\label{eq:practical-w-construct}
\begin{split}
-\Delta\bfw_h^{n+1} & = 0\textrm{ in }\Omega_h^{n+1},\\
\bfw_h^{n+1} & = \bfv_h^{n+1}\textrm{ on }\partial\Omega_h^{n+1}.
\end{split}
\end{equation}
Clearly, such approach results in a highly non--linear system. For linearization of the fluid equations we use Newton's linearization technique, which is standard. For linearization of the geometrical coupling, the complete coupled system ((\ref{sys:vf-discrete}),(\ref{eq:practical-ALE-construct}),(\ref{eq:practical-w-construct})) may be linearized using the Newton's technique (proposed in \cite{soulaimani98}) which involves some tools from {\it shape optimization} theory. Alternatively, a semi--implicit approach, where geometrical coupling is formally decoupled from the fluid equations, may be used. The two systems (\ref{sys:vf-discrete}) and ((\ref{eq:practical-ALE-construct}),(\ref{eq:practical-w-construct})) are then solved iteratively until convergence is achieved. Such approach has been used in \cite{hecht17} for FSI problems. Numerical results presented below are obtained employing the latter, semi--implicit approach mainly to avoid dealing with the shape derivatives which exceed the topic of this paper. {\it Mini} finite element space is employed for the velocity--pressure pair, $\sV_h\times\sQ_h=[\fespaceminiP1]^3\times\fespaceP1$, while space of linear functions is employed for the ALE map and mesh velocity, $\sA_h=[\fespaceP1]^3$.
\begin{remark}
Note that the boundary of the domain, $\partial\Omega_h$, is evolving with material velocity $\bfv_h$ (see (\ref{eq:practical-w-construct})$_2$). This ensures the validity of assumption (\ref{prop:assumptions2}) in Proposition~\ref{prop:discrete-energy-balance}, which was essential for deriving the energy balance. However, although this is a very standard choice in droplet kinematics simulations, it does result in more often need for a re--meshing. An alternative choice of the domain boundary velocity which aims to avoid this issue may be studied in the future.
\end{remark}

As may be noted from the derivation of the energy estimates (Proposition~\ref{prop:discrete-energy-balance}), time step has no direct impact on the scheme stability (assuming that the quadrature rule is chosen such that the violation of discrete SCL is small). The main restriction on the time step comes from the non--linearity of the system to be solved (initial guess has to be close enough for the scheme to converge), and due to implicit involvement of the geometry itself into the system through the curvature term. For example, if the position of the free surface is updated with the fluid velocity, then the difference between two configurations may be too large in areas where velocity is large if the time step is not sufficiently small. Consequently, these areas may exhibit unphysically large curvature which, in term, enlarges the capillary forces and causes the scheme breakdown. This issue is related directly to the geometry updating and it is also characteristic for the {\it mean curvature flow} simulations. Hence, in practice, the time step has to be chosen sufficiently small in order to ensure the smooth geometry evolution and the scheme convergence. While it is clear that it somehow depends on the characteristic velocity and, consequently, on the dimensionless numbers of the system, the exact relationship is not known to the best of our knowledge. Finding the largest suitable time step for which  the geometry iterative updating converges usually requires some experimenting.

\section{Numerical validation}\label{sec:numerical-validation}
This section deals with validation of the newly proposed numerical method. First, we provide a short review on the non--dimensionalization process -- more specifically, we try to justify our choice of the characteristic velocity. Then we compare the numerical result of a simplified scenario with an existing analytical solution in 2D setup (in subsection~\ref{subsec:2d}). We also provide a short report on convergence with respect to the mesh size parameter $h$ and time step $\Delta t$ in both 2D and 3D setups (in subsections~\ref{subsec:2d} and \ref{subsec:3d}). This is done by comparing the steady state solutions of simplified scenarios obtained with different parameters $h$ and $\Delta t$. It is shown that the energy balance is independent of the time step and mesh size parameter confirming the theoretical predictions developed earlier. Finally, a realistic 3D case of a droplet on an inclined and non--homogeneous surface is investigated. We show that our scheme is able to simulate a complex droplet behaviors (sliding and rolling) while keeping its energy stability even for a long time simulations. For all cases reported in this paper, Crank--Nicolson quadrature rule is employed for handling the discrete SCLs on $\Sigma_h$ and $\Gamma_h$. Hence, the spurious energy is of second order in time and this proved to be sufficiently small to ensure the scheme stability for all cases reported below.

All of the simulations are performed with {\it FreeFem++} software (\cite{hecht12}).

\subsection{The choice of the characteristic length and velocity}\label{subsec:charac-length}
The initial configuration of the droplet is set up as a {\it spherical cap} of a given volume $V_0$ and where the angle between sphere and the {\it cut-off plane} equals to contact angle $\theta_s$. See Figure~\ref{fig:setup} for illustration. The characteristic length $L$ is then defined as the radius of such spherical cap.

To determine characteristic velocity $U$, let us consider droplet on a horizontal surface. Assuming the order of magnitude $\vert p \vert \sim \vert \sigma \Delta_\Sigma\bfx_\Sigma \vert$
and taking into account $p_c=\varrho U^2$, we obtain
\begin{equation}\label{eq:charact-vel}
\varrho U^2 \sim \frac{\gamma}{L}\textrm{ i.e. } U=\sqrt{\frac{\gamma}{\varrho L}}.
\end{equation}
Inserting this definition for characteristic velocity $U$ into the dimensionless numbers $\noFr$, $\noRe$ and $\noCa$, we obtain
\[
\noFr^2 = \noBo^{-1}\;,\:\noRe = \noLa^{1/2}\;,\:\noCa = \noLa^{-1/2}\;,\textrm{ where }\noBo = \frac{\varrho g L^2}{\gamma}\;,\:\noLa = \frac{\gamma\varrho L}{\mu^2}\;.
\] 

Alternative definitions for characteristic velocity would also make sense in certain scenarios. For example, on an inclined plane, assuming dominantly sliding dynamics of a droplet, the characteristic velocity can be derived from the balance between gravity acceleration and friction forces. Characteristic velocity for purely rolling droplets has already been investigated in \cite{mahadevan99}. Definition (\ref{eq:charact-vel}) for the characteristic velocity proved to be a decent choice for all of the numerical simulations presented in this paper.

Regarding the Navier slip coefficient, in its dimensional form it is given by $\varsigma=\mu/l_s$, where $l_s$ is the {\it slip length} (see \cite{qian03,qian06,ganesan09,rothstein10}). $l_s$ is typically of order $10^{-9}$ m (i.e. order of nanometers) but it can be greatly altered on manufactured surfaces by, e.g, applying hydrophobic coatings (\cite{nakajima11}). For example, in \cite{rothstein10}, surfaces with slip length greater than $25$ $\mu$m have been reported. Using the physical quantities typical for water, the slip coefficient in its dimensionless form (used in this paper) can be estimated to be of order $10^{-1}\sim 10^4$ (from surfaces with large to small slip length). For the purposes of this paper, hydrophobic surfaces with large slip length are of the main interest, on which droplets are able to depin and start moving.

\subsection{Validation in 2D: Comparison with analytical solution}\label{subsec:2d}
The exact solution of the Young--Laplace equation governing the shape of 2D droplets in the gravity field was derived in \cite{cunjing18}. On a horizontal surface the (symmetric) shape of a droplet of volume $V$ is given by parametric equations
\begin{equation}\label{eq:YL}
\begin{split}
x(\vartheta) = \pm \frac{\sqrt{2} a}{2} \int\limits_0^\vartheta \frac{\cos\xi}{\sqrt{A-\cos\xi}}\td\xi\;,\:
y(\vartheta) = -\sqrt{2}a\sqrt{A-\cos\vartheta}
\end{split}
\end{equation}
for $\vartheta\in[0,\theta_s]$, where $a^2=\noBo^{-1}$ and $A\in[1,\infty]$ is a constant determined by
\[
V = 2a^2\left[ \sqrt{A-\cos\theta_s}\int\limits_0^{\theta_s}\frac{\cos\xi}{\sqrt{A-\cos\xi}}\td\xi - \sin\theta_s \right].
\] 
To validate the proposed numerical method, we ran simulations with with $\varsigma=0$ (i.e. perfect slip condition) and for various choices of other dimensionless numbers until steady state solution was reached . At $T=16$ steady state is reached (or almost reached) for all the cases presented below. The initial configuration is chosen as described in the previous subsection. The obtained droplet shapes have been then compared with the analytical solution governed by Young--Laplace equation and excellent match has been observed. Simulations were run on (initially) uniform meshes of different mesh sizes and with various time steps. For the results presented below, $\noLa=1$ and $\theta_s=3\pi/4$, while $\noBo$ may vary and is specified for each case scenario. Dimensionless volume is $V_0=2.85$ in all cases. Relative volume oscillations defined by $\vert V_0 - V(t)\vert/{V_0}$, where $V(t)$ denotes droplet volume at time $t$, are conserved within order of $10^{-4}$ for all cases. The whole simulation does not require any mesh adaptation since the relative deformation is fairly small (more complex cases with necessary mesh adaptation are investigated later).

In Figure~\ref{fig:2Dshapes}, droplet shapes after simulations reached steady state are shown. For all cases in this scenario, time step was chosen as $\Delta t=0.1$ and mesh quality parameter $h=0.1$. This corresponds to $346$ mesh vertices for (initially) uniform mesh. In Figure~\ref{fig:varyBo} shapes governed by analytical solution (\ref{eq:YL}) are shown for $\noBo=0.2,0.4,0.8$. Figures~\ref{fig:Bo2e-1}, \ref{fig:Bo4e-1} and \ref{fig:Bo8e-1} show analytical versus numerically obtained droplet shapes. Excellent agreement can be observed.

\begin{figure}[h]
\subfloat[Exact droplet shapes for $\noBo=0.2,0.4,0.8$.]{\includegraphics[width=0.48\linewidth]{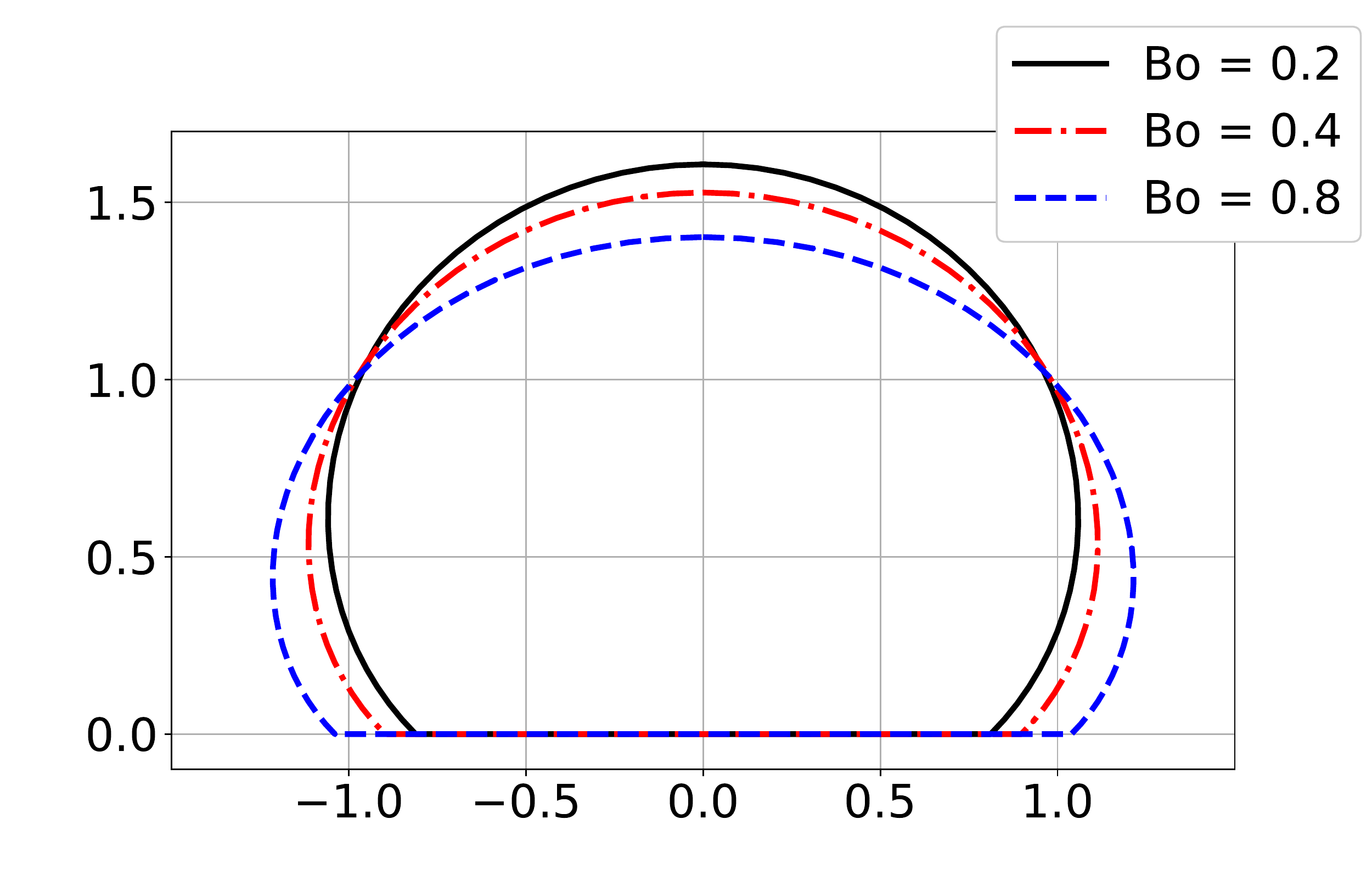}\label{fig:varyBo}}\hspace{0.02\linewidth}\subfloat[Exact versus numerically obtained (steady state) droplet shapes, $\noBo=0.2$.]{\includegraphics[width=0.48\linewidth]{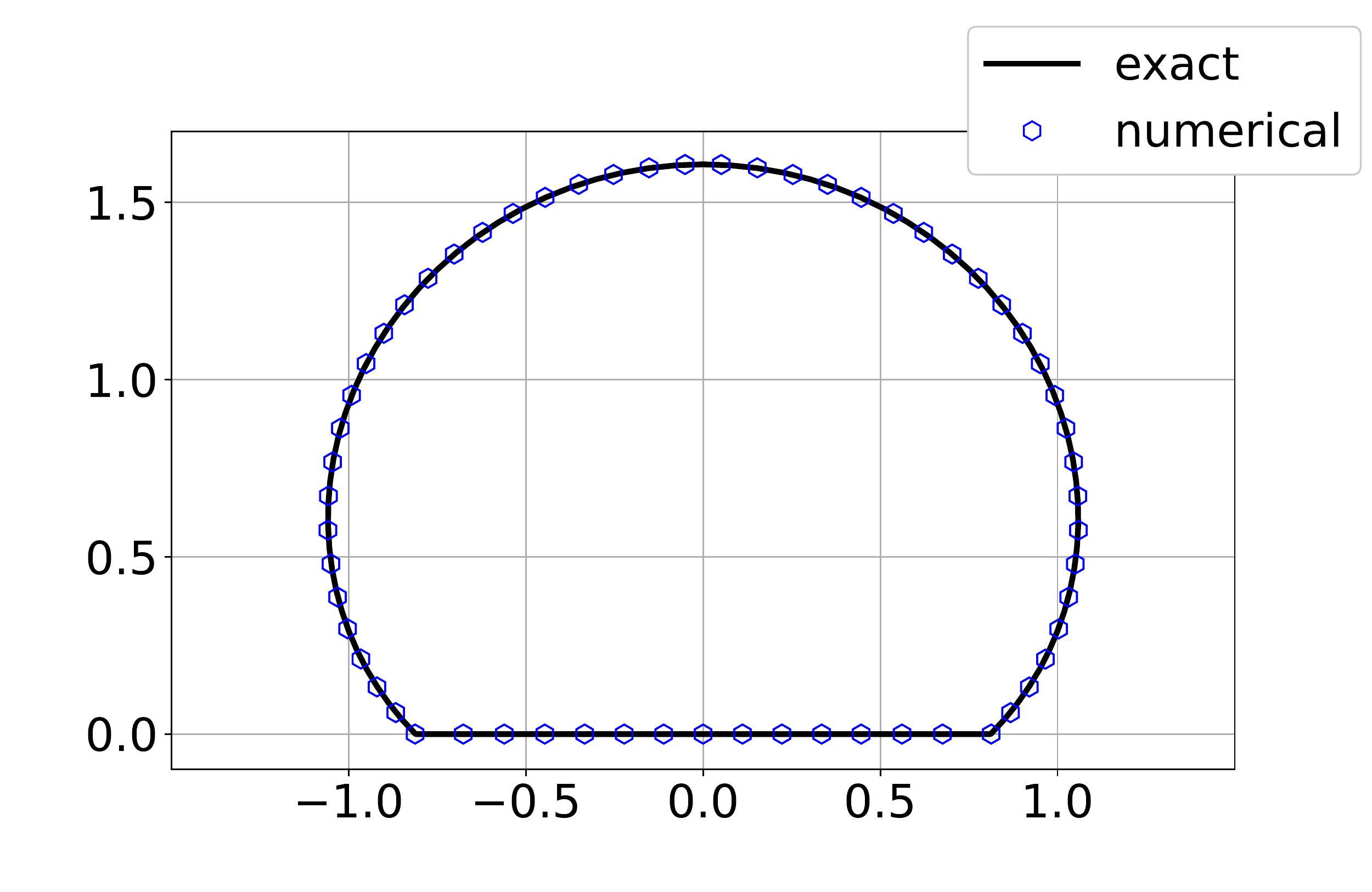}\label{fig:Bo2e-1}}

\subfloat[Exact versus numerically obtained (steady state) droplet shapes, $\noBo=0.4$.]{\includegraphics[width=0.48\linewidth]{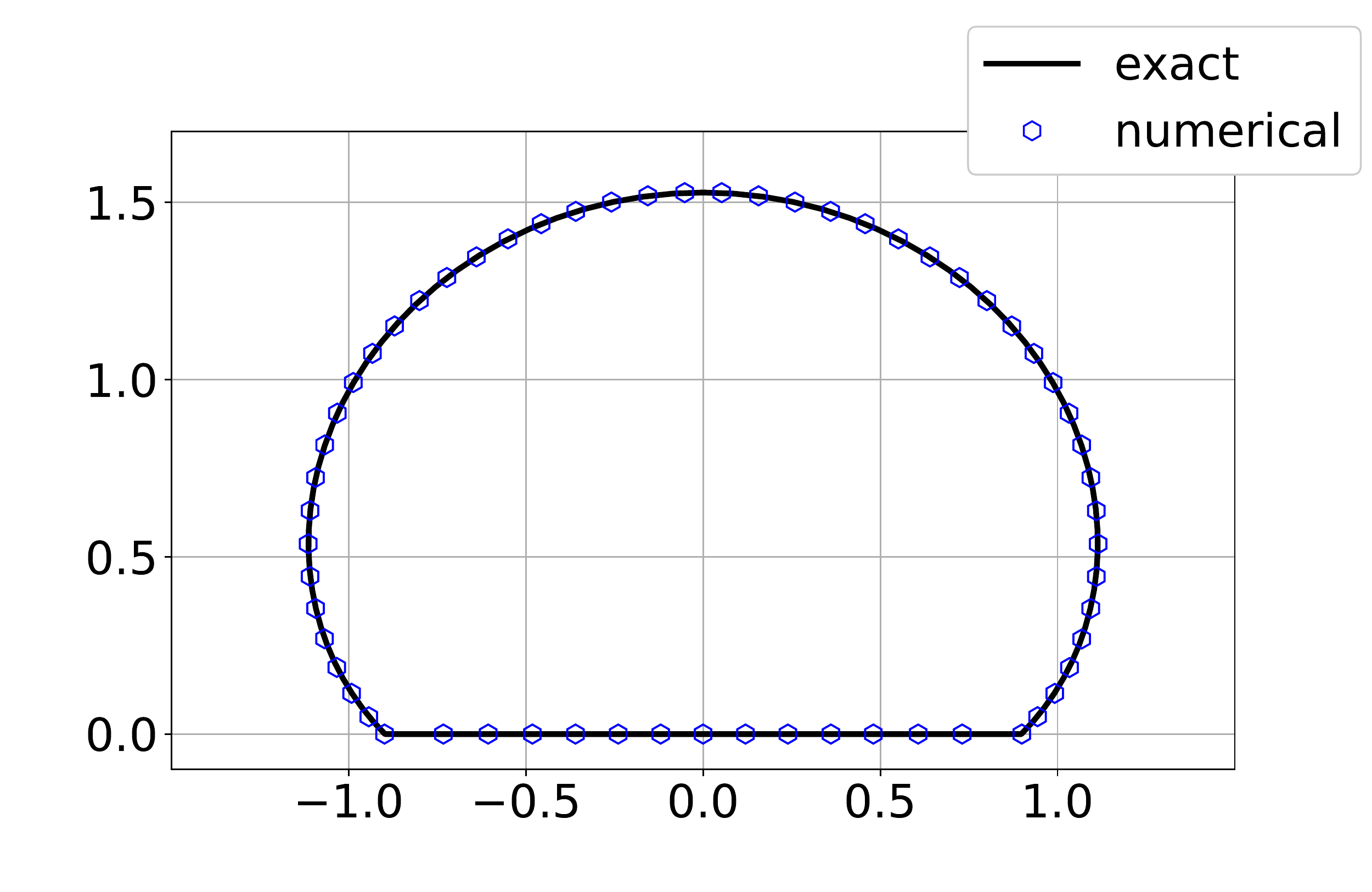}\label{fig:Bo4e-1}}\hspace{0.02\linewidth}\subfloat[Exact versus numerically obtained droplet shapes, $\noBo=0.8$.]{\includegraphics[width=0.48\linewidth]{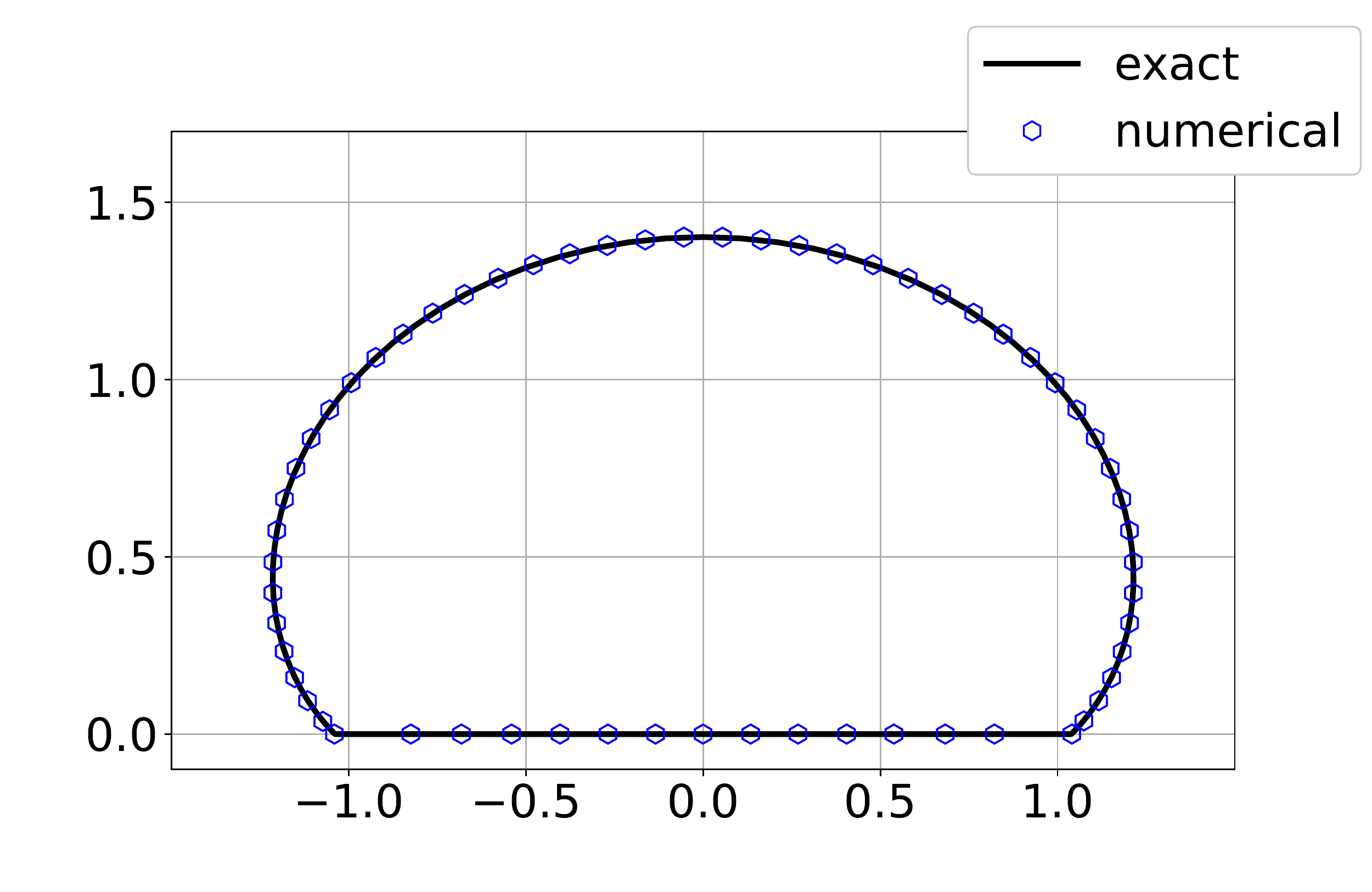}\label{fig:Bo8e-1}}

\caption{Comparison of the exact and numerically obtained (steady state) droplet shapes for the various choices of Bond numbers. For the presented numerical results $\Delta t=0.1$ and $h=0.1$.}
\label{fig:2Dshapes}
\end{figure}

In Figure~\ref{fig:2D-mesh-time} steady state droplet shapes numerically obtained for various choices of the time step $\Delta t$ and mesh quality parameter $h$ are shown. Bond number is kept fixed for all simulations, $\noBo=0.2$. It can be observed that for all cases numerically obtained steady state droplet shapes converge towards the analytical shape.

\begin{figure}[h]
\subfloat[Numerically obtained steady state droplet shapes for different choices of $h$ with $\Delta t=0.2$.]{\includegraphics[width=0.48\linewidth]{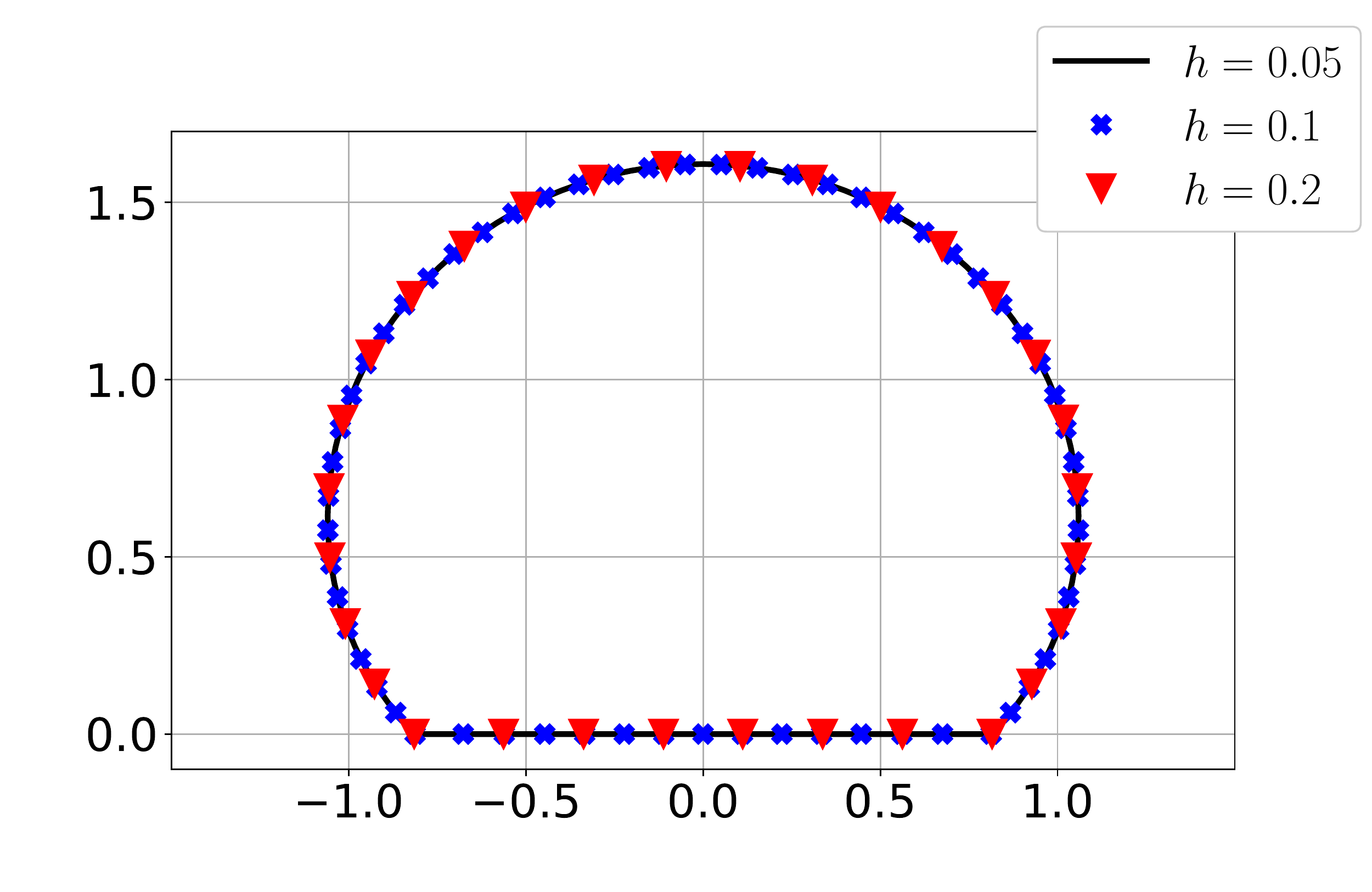}\label{fig:vary-mesh}}\hspace{0.04\linewidth}\subfloat[Numerically obtained steady state droplet shapes for different choices of $\Delta t$, with $h=0.1$.]{\includegraphics[width=0.48\linewidth]{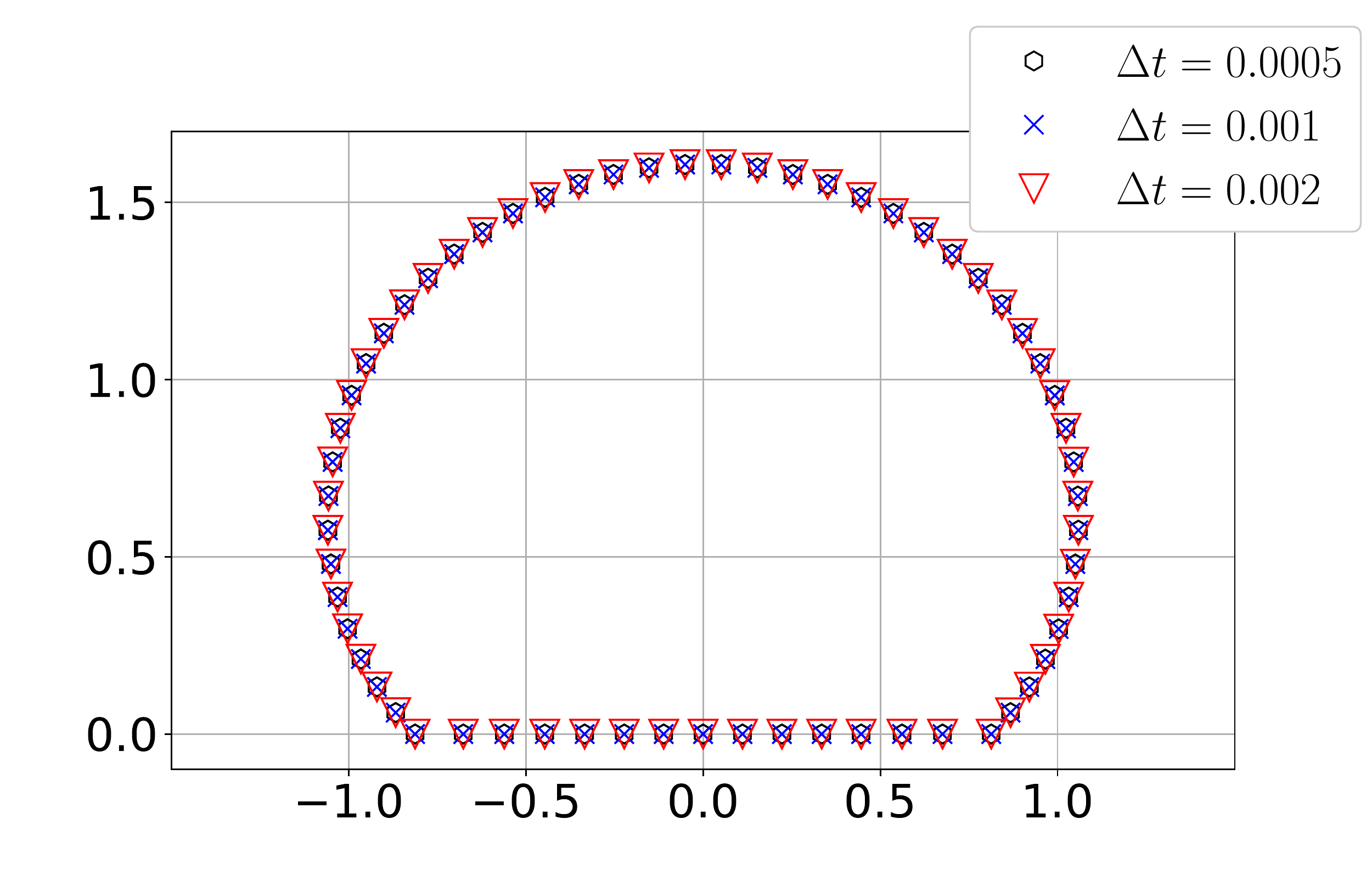}\label{fig:vary-time}}
\caption{Comparison of the numerically obtained (steady state) droplet shapes for the various choices of mesh parameter $h$ and time step $\Delta t$. Bond number is kept fixed, $\noBo=0.2$. For all cases, steady state droplet shapes converge towards the shape predicted by Young--Laplace equation.}
\label{fig:2D-mesh-time}
\end{figure}

Recall, by Proposition~\ref{prop:discrete-energy-balance} and Remark~\ref{rem:questionable-terms}, the discrete counterpart of the energy balance defined by (\ref{eq:discr-count-eb}), ${\cal E}_{\Delta t, h}$, has been estimated as
\[
{\cal E}_{\Delta t,h}^{n+1} + o(\Delta t^k) \leq 0,
\]
where $k$ depends on the choice of quadrature rule $\intInn1$ in the free-surface and contact line terms. Ideally, ${\cal E}_{\Delta t,h}^{n+1} = 0$. In Figure~\ref{fig:2D-energies} we plot discrete energy balance ${\cal E}_{\Delta t,h}$ with respect to two different time steps and two different mesh parameters. It can be observed that neither the time step nor the mesh parameter influence the energy balance (this is in agreement with the theoretical estimates). We mentioned that this has been confirmed on multiple tests -- we plot here only two different scenarios for clear visualization.

\begin{figure}[h]
\subfloat[Evolution of quantity ${\cal E}_{\Delta t,h}$ for different choices of $\Delta t$, with $h=0.1$.]{\includegraphics[width=0.48\linewidth]{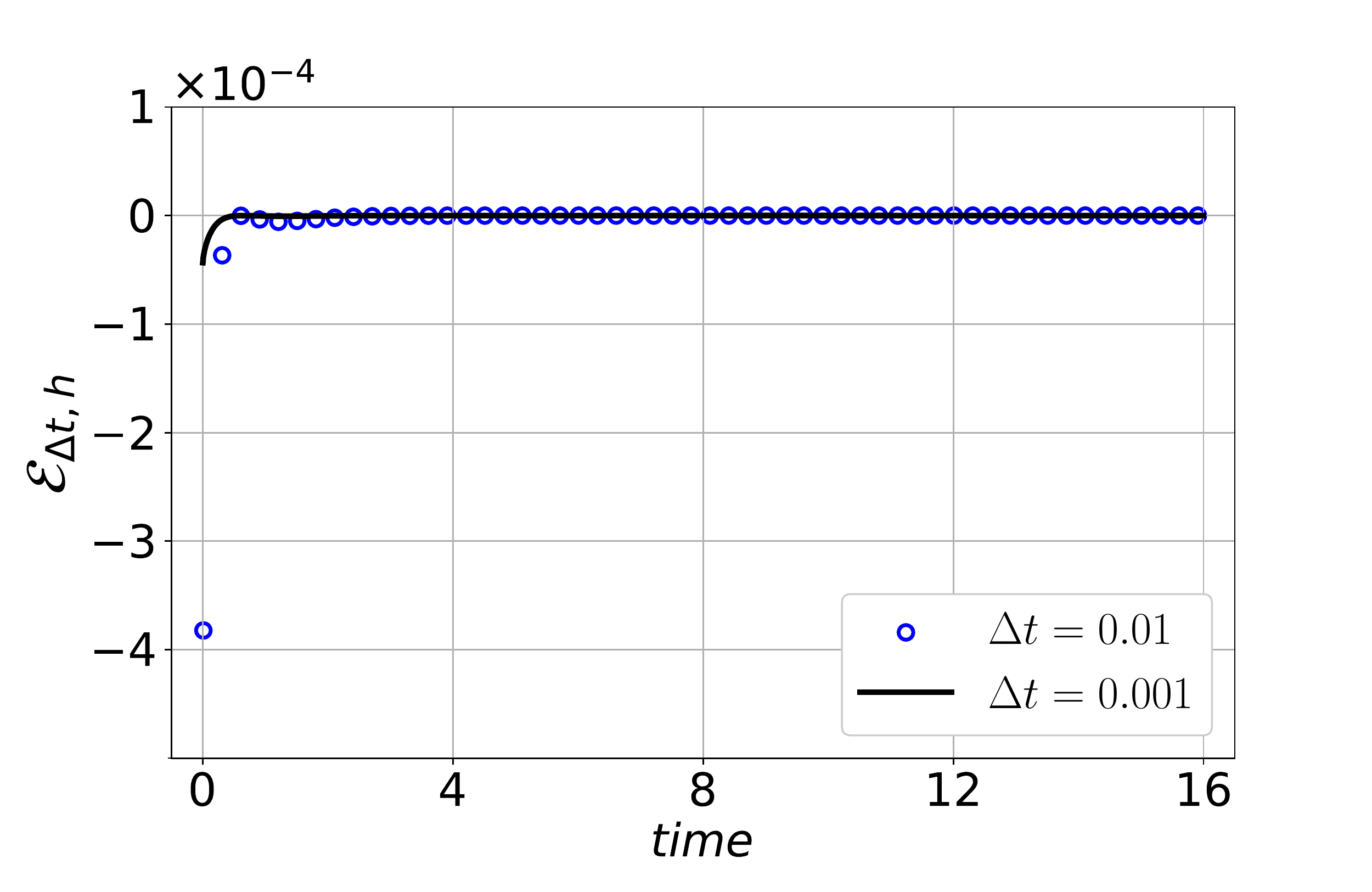}\label{fig:vary-t2d}}\hspace{0.04\linewidth}\subfloat[Evolution of quantity ${\cal E}_{\Delta t,h}$ for different choices of $h$, with $\Delta t=0.002$.]{\includegraphics[width=0.48\linewidth]{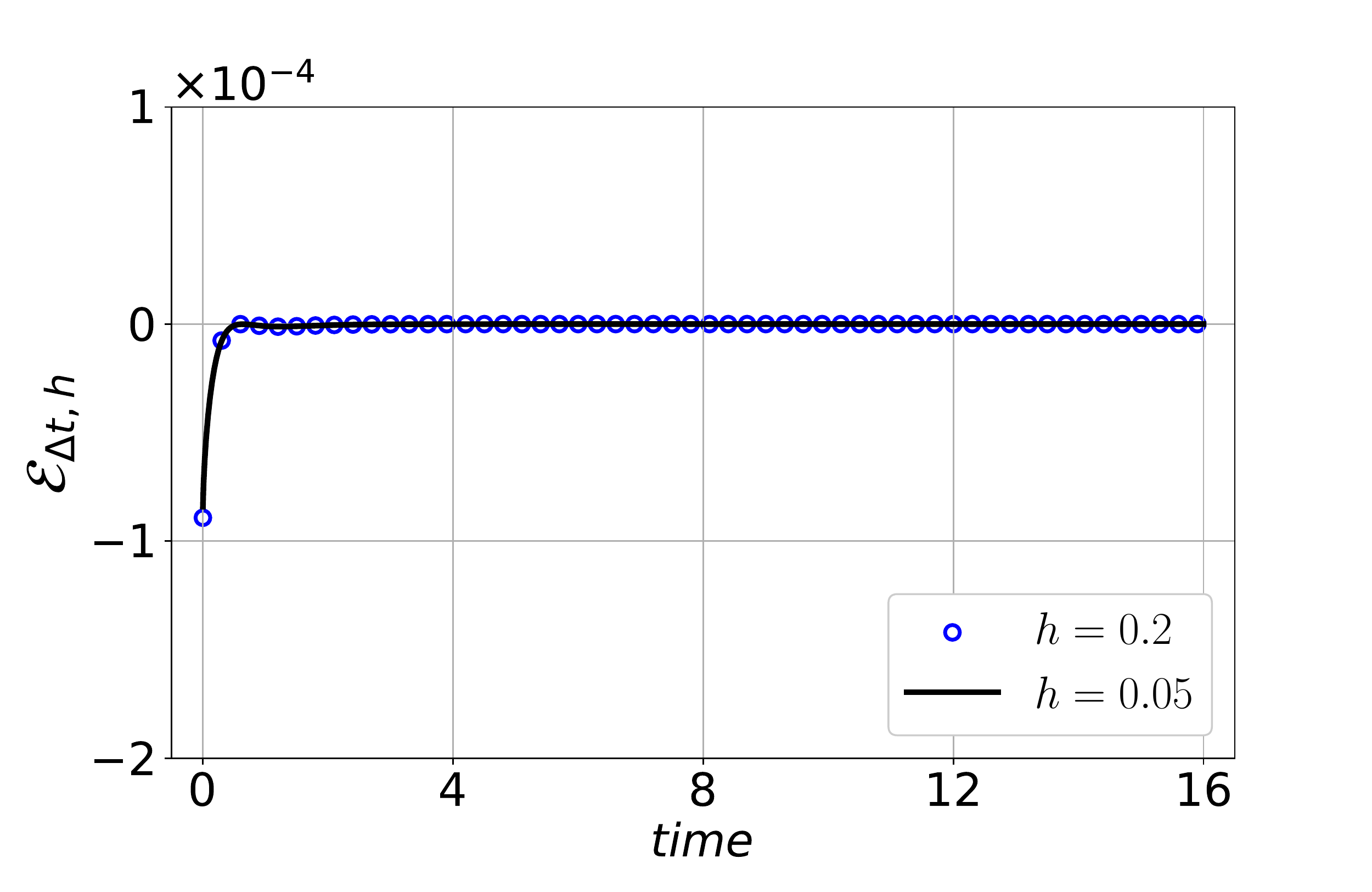}\label{fig:vary-h2d}}
\caption{Comparison of discrete energy balance ${\cal E}_{\Delta t,h}$ (\ref{eq:discr-count-eb}) for two different choices of mesh parameters $h$ and time steps $\Delta t$. Bond number is kept fixed, $\noBo=0.2$. Remark: circle markers are not plotted at each time step to improve visibility.}
\label{fig:2D-energies}
\end{figure}

\subsection{Validation in 3D}\label{subsec:3d}
To investigate our numerical scheme in a 3D setup, we performed analogous tests to those in 2D setup reported above. The scheme remains stable in the 3D setup regardlessly of the time step or mesh parameter (provided they are sufficiently small for the Newton method to converge). For the results presented below, $\varsigma=0$ (perfect slip), $\noLa=1$, $\noBo=0.4$, and the simulation final time is $T=16$ at which steady state is (almost) reached. Dimensionless volume is $V_0=3.88$ in all cases. Relative volume oscillations are conserved within order of $10^{-4}$ for all cases. Mesh adaptation is not required due to relatively small deformation.

In Figure~\ref{fig:3D-shapes}, numerically obtained steady state droplet shapes are shown for various time steps and mesh parameters. Figure~\ref{fig:finalmesh3d} shows the steady state droplet shape obtained on mesh with parameter $h=0.2$ and with time step $\Delta t=0.01$. To confirm that schemes with different time steps and mesh parameters do not converge towards different solutions, we compare the final droplet shapes obtained with different parameters. For clearer visualization, we extract curves obtained by intersecting the (steady state) droplet meniscus with planes perpendicular to $xy$--plane and passing through point $(0,0,0)$. Two such planes are shown in Figure~\ref{fig:3dplane-cut}. Figures \ref{fig:vary-h3d} and \ref{fig:vary-t3d} show the comparison of steady states obtained for two different mesh sizes ($h=0.2,0.1$) and two different time steps ($t=0.01,0.005$). It may be observed that an excellent agreement is achieved. In Figure~\ref{fig:3D-energies}, the profiles of discrete energy balance ${\cal E}_{\Delta t,h}$ defined by (\ref{eq:discr-count-eb}) and obtained for different choices of $h$ and $\Delta t$ are compared. It may be observed that the numerical results confirm the theoretical estimates.

\begin{figure}[h]
\subfloat[Steady state droplet shape obtained with scheme parameters $h=0.2$ and $\Delta t=0.01$.]{\includegraphics[width=0.45\linewidth]{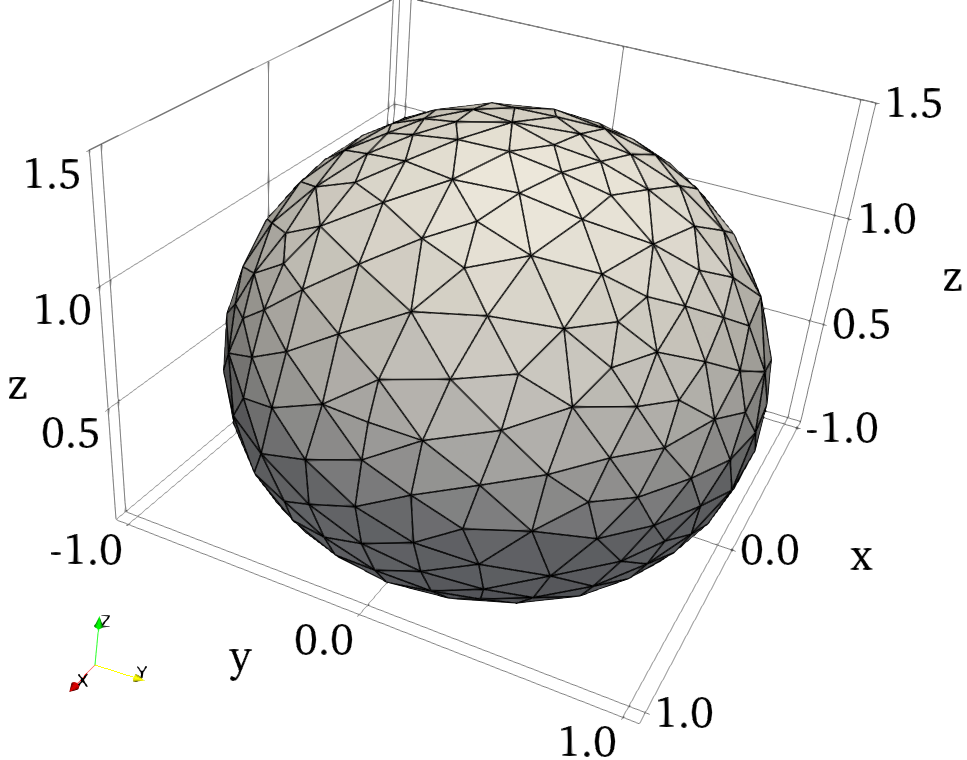}\label{fig:finalmesh3d}}\hspace{0.08\linewidth}\subfloat[Steady state droplet meniscus and two cutting planes passing through $(0,0,0)$.]{\includegraphics[width=0.45\linewidth]{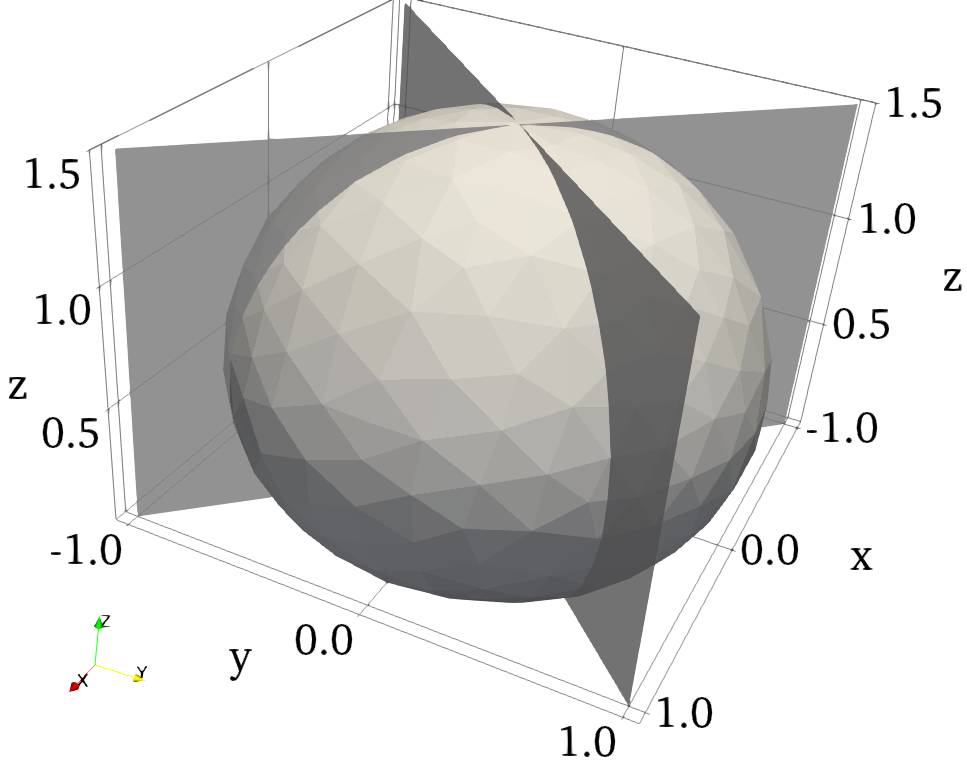}\label{fig:3dplane-cut}}

\subfloat[Intersection of numerically obtained steady state droplet menisci for different choices of $h$ (with $\Delta t=0.01$) and planes shown in Figure~\ref{fig:3dplane-cut}.]{\includegraphics[width=0.51\linewidth]{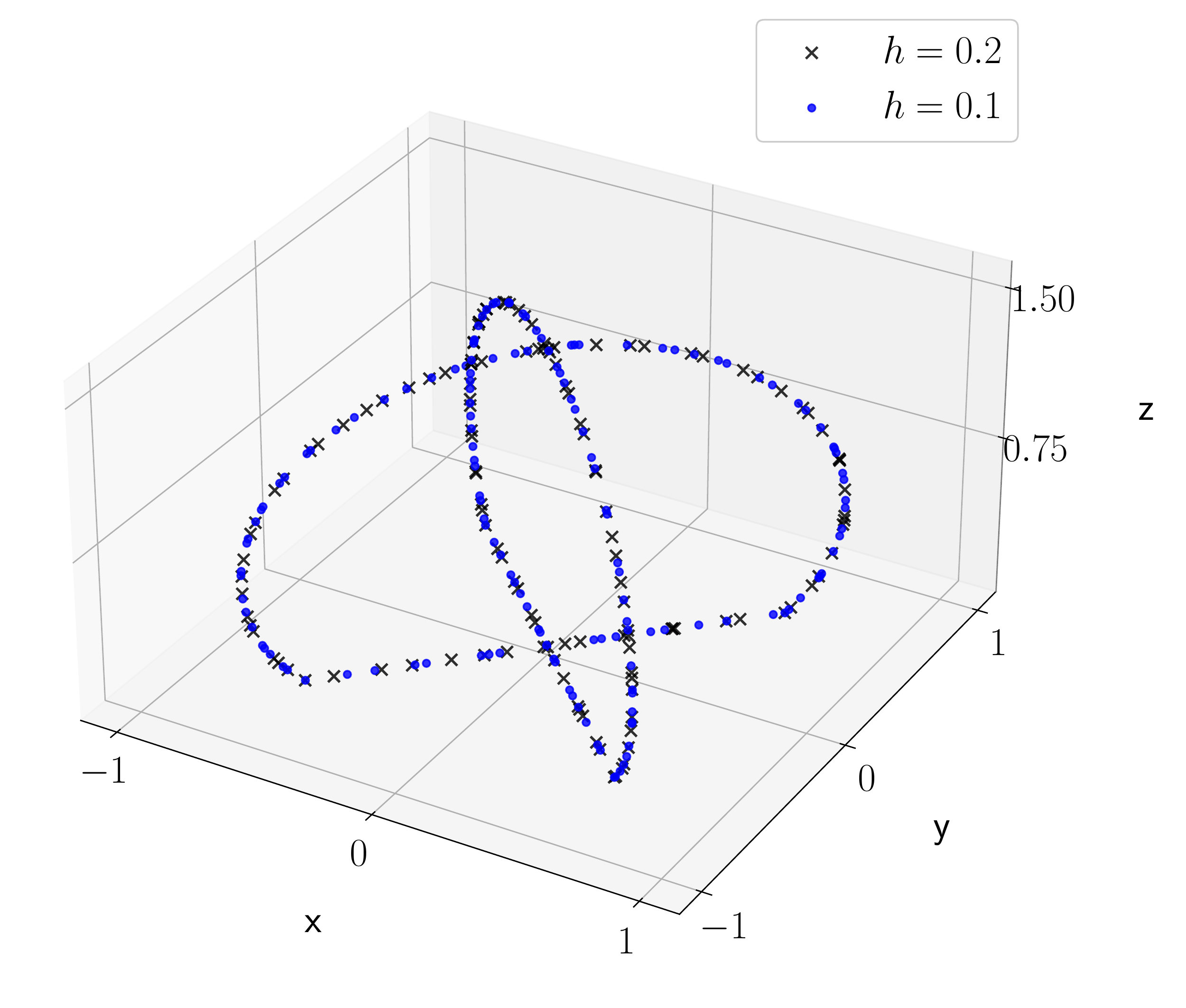}\label{fig:vary-h3d}}\hspace{0.02\linewidth}\subfloat[Intersection of numerically obtained steady state droplet menisci for different choices of $\Delta t$ (with $h=0.1$) and planes shown in Figure~\ref{fig:3dplane-cut}.]{\includegraphics[width=0.51\linewidth]{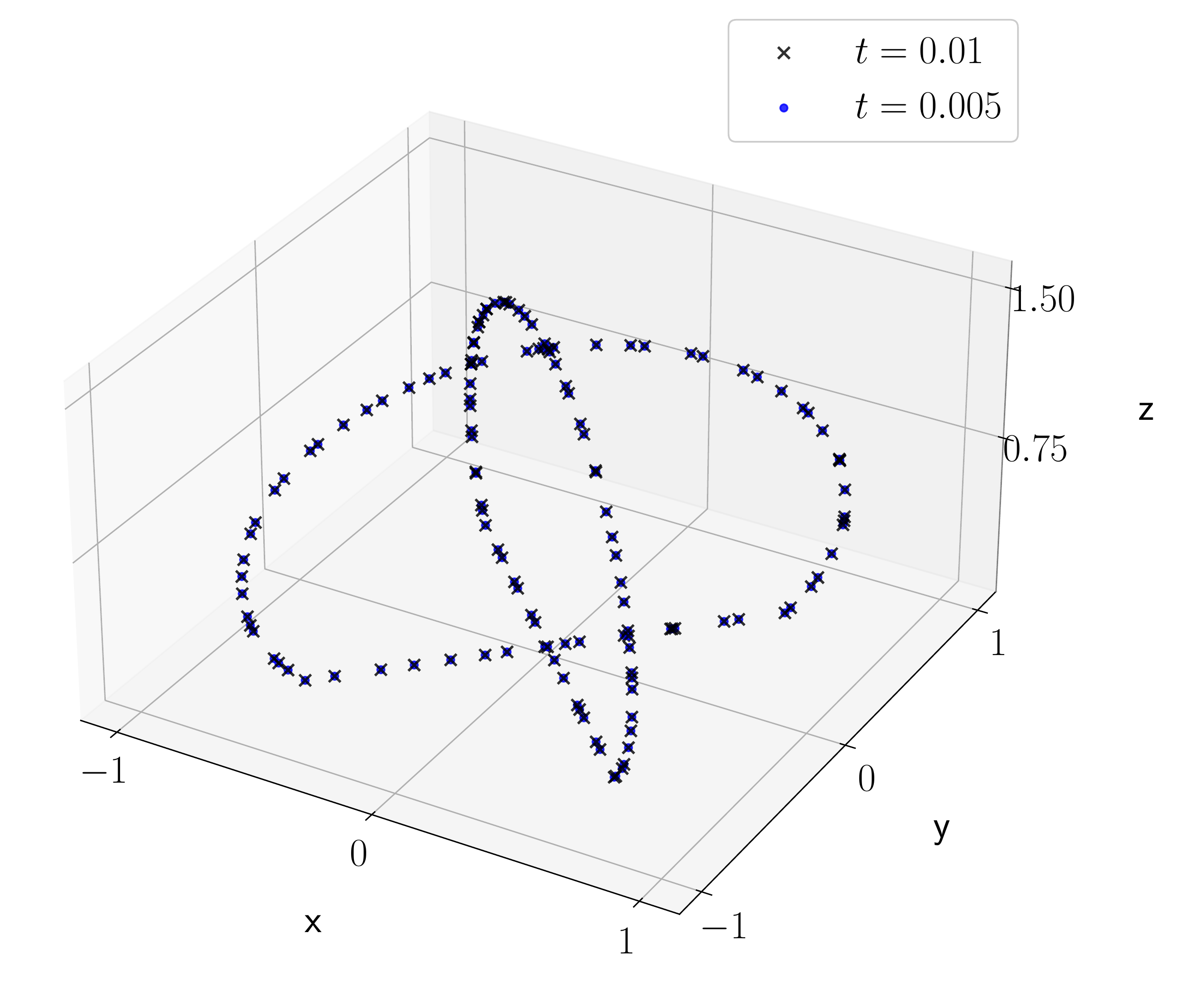}\label{fig:vary-t3d}}
\caption{Comparison of numerically obtained droplet menisci for the various choices of mesh parameters $h$ and time steps $\Delta t$. Bond number is kept fixed, $\noBo=0.4$. In all cases, steady state droplet shapes converge towards the same shape.}
\label{fig:3D-shapes}
\end{figure}

\begin{figure}[h]
\subfloat[Evolution of quantity ${\cal E}_{\Delta t,h}$ for different choices of $h$, with $\Delta t=0.01$.]{\includegraphics[width=0.45\linewidth]{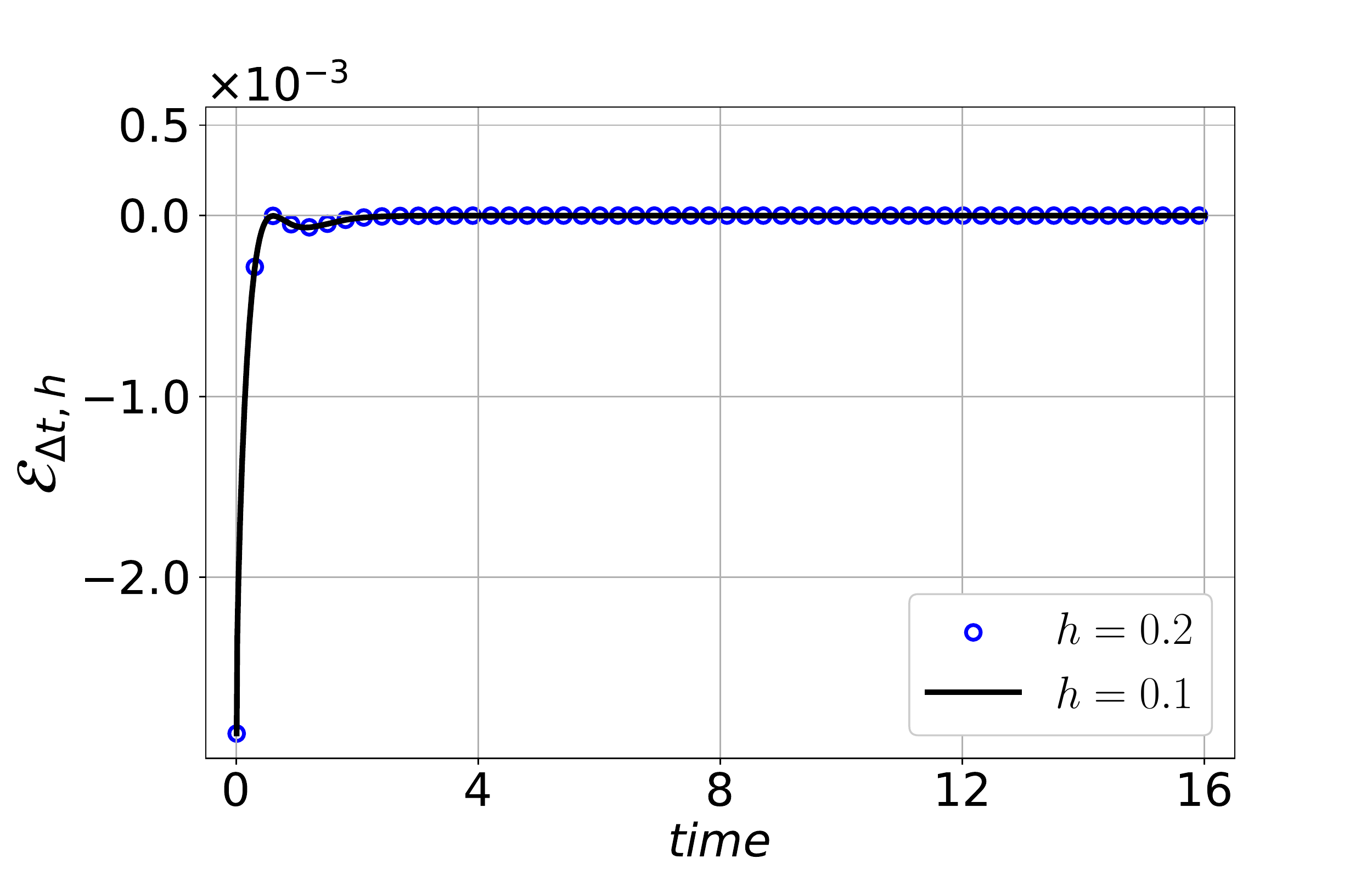}\label{fig:E-vary-m3d}}\hspace{0.08\linewidth}\subfloat[Evolution of quantity ${\cal E}_{\Delta t,h}$ for different choices of $\Delta t$, with $h=0.2$.]{\includegraphics[width=0.45\linewidth]{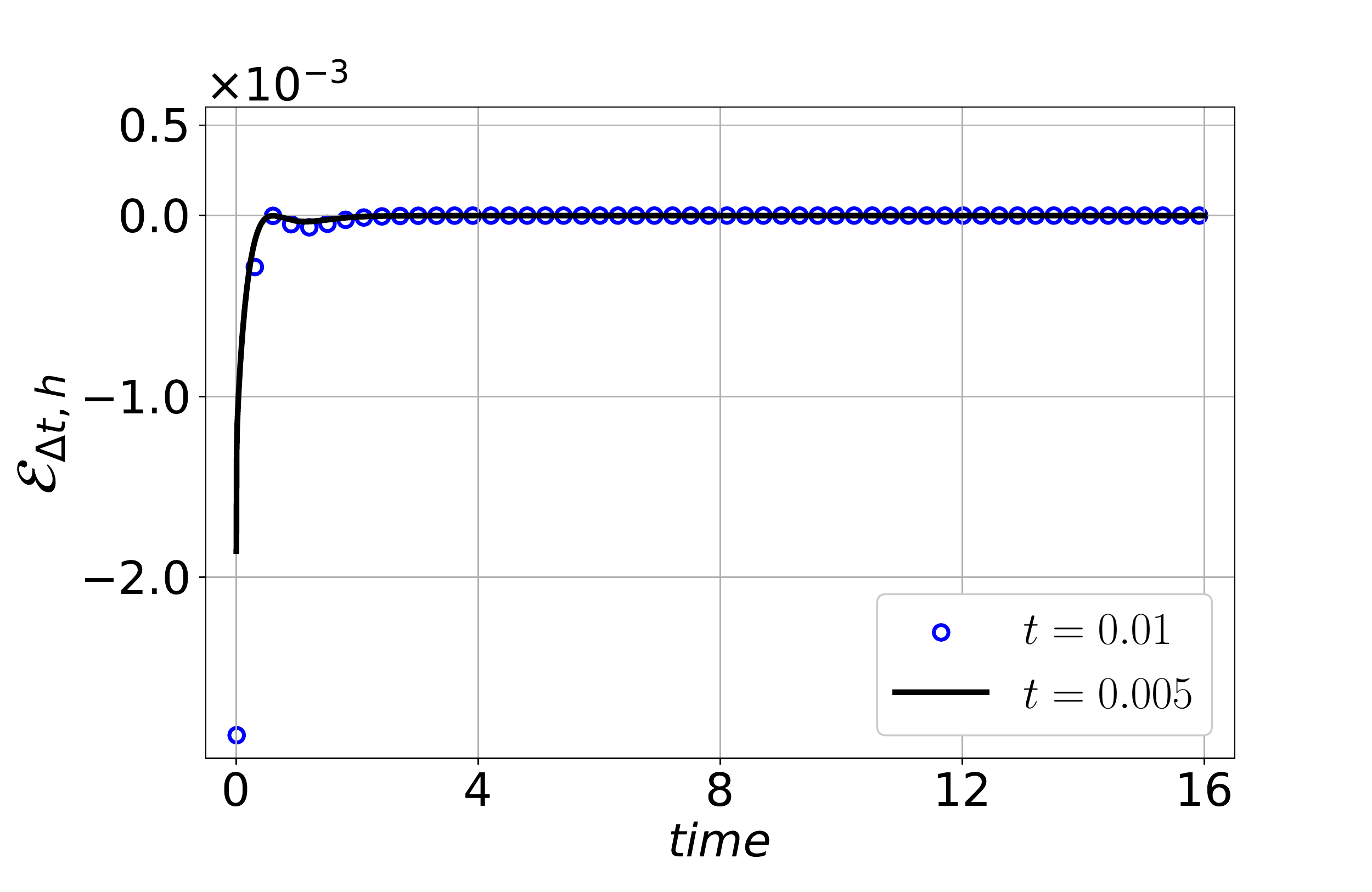}\label{fig:E-vary-t3d}}

\caption{Comparison of discrete energy balance ${\cal E}_{\Delta t,h}$ (\ref{eq:discr-count-eb}) for two different choices of mesh parameters $h$ and time steps $\Delta t$. Bond number is kept fixed, $\noBo=0.4$. Remark: circle markers are not plotted at each time step to improve visibility.}
\label{fig:3D-energies}
\end{figure}

\subsection{In--dept investigation of the scheme capabilities on realistic and complex 3D setup}\label{subsec:detail-drop}
For the final test, we consider a droplet of dimensionless volume $V_0=4.1$ on an inclined surface with the inclination angle $\alpha=\pi/4$. Recall, we chose the coordinate system in which $xy$--plane is aligned with the supporting surface and inclination is performed in $xz$--plane -- see definition (\ref{eq:inclination-k}). Furthermore, to increase the complexity, we consider a non--homogeneous supporting surface $\Gamma$ where the non--homogeneity is introduced through the varying static contact angle, $\theta_s=\theta_s(\bfx_\Gamma)$, defined by
\[
\theta_s(\bfx_\Gamma) = \frac{5\pi}{6} + \chi_{\{x<5\}}\frac{-x+1-y}{20} + \chi_{\{x>5\}}\frac{-4-y}{20}\;,\:\bfx_\Gamma=(x,y)\in[-1,9]\times[-2,2]\;,
\] 
where $\chi$ denotes the characteristic function on $\bbmR$ (see Figure~\ref{fig:cont_angle}). This example was partially motivated by work in \cite{mistura17}. The initial geometry is constructed as described in Subsection~\ref{subsec:charac-length} with the initial static contact angle $\theta_s=5\pi/6$. The rest of the dimensionless numbers are as follows: $\noLa^{1/2} = 10$, $\noBo = 0.3$ and $\varsigma = 1$. For the results presented below, $\Delta t = 0.02$ and initial mesh is chosen to be uniform with $h=0.15$ ($1253$ vertices). Simulation was run until $T=12$.

Figure~\ref{fig:init-final-complex3D} shows droplet states at times $t=0$, $6$, $9$ and $12$. It can be observed that within the time interval $[0,12]$, the droplet has traveled a significant distance. In Figure~\ref{fig:cont_angle}, droplet trajectory from the top view (onto $xy$--plane) and static contact angle are shown. It can be observed that the non--uniformity of the surface results in "pulling--off" the droplet out of the inclination direction. We confirm that this does not occur for the case of the homogeneous supporting surface in our simulations, although, such results are not reported here. This case also illustrates the importance of the full 3D simulations -- clearly, such scenario cannot occur in 2D.

\begin{figure}[h]
\subfloat[Droplet states at times $t=0$, $t=6$, $9$ and $t=12$.]{\includegraphics[width=0.48\linewidth]{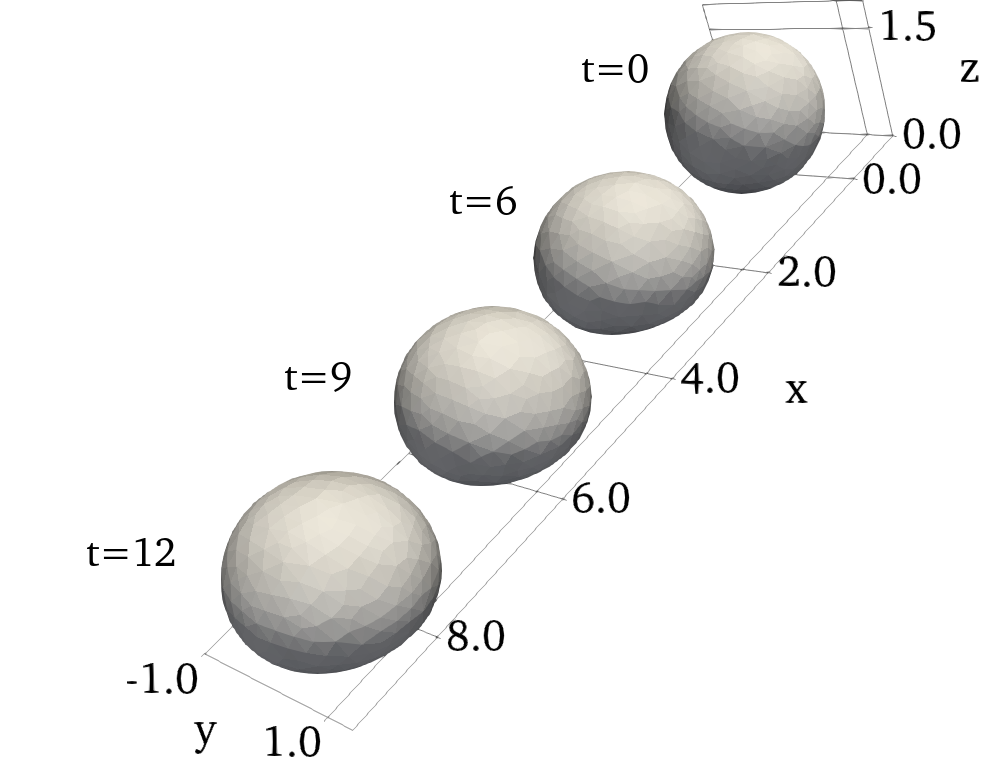}\label{fig:init-final-complex3D}}\hspace{0.04\linewidth}\subfloat[The droplet trajectory and inclination direction (solid line) are shown on the left. Static contact angle is shown on the right.]{\includegraphics[width=0.48\linewidth]{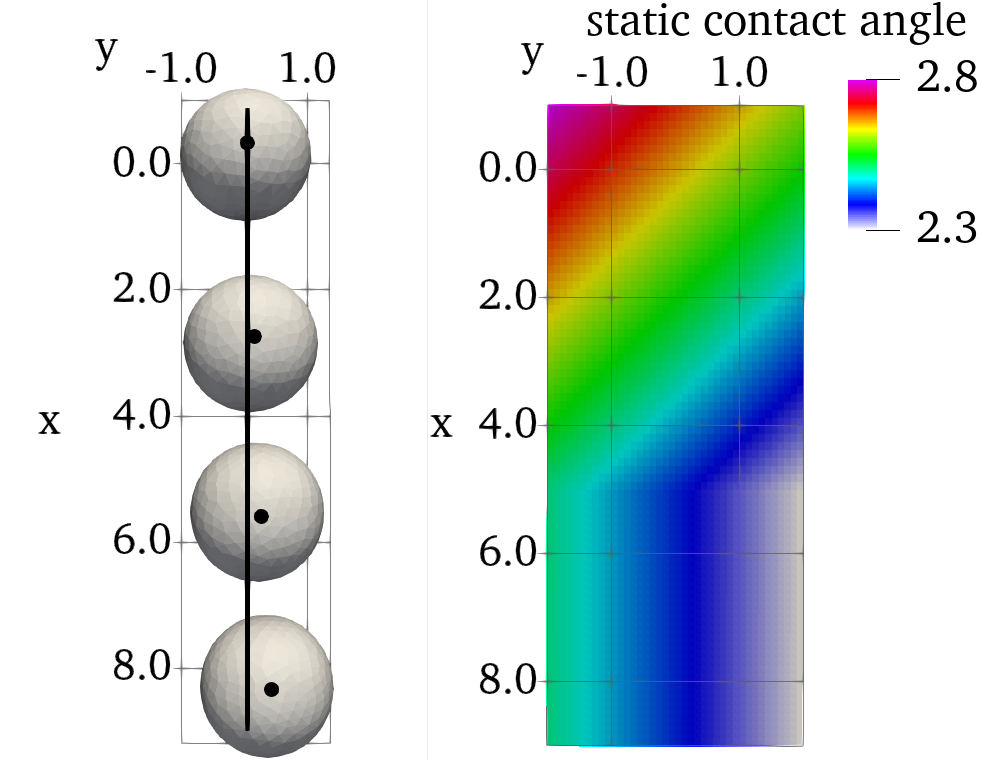}\label{fig:cont_angle}}

\caption{In Figure~\ref{fig:init-final-complex3D} droplet states at different time instants are shown, including initial ($t=0$) and final time ($t=12$). In Figure~\ref{fig:cont_angle} droplet trajectory (black dots denote the center of mass) and static contact angle are shown. It can be observed that the droplet is "pulled--off" off the inclination direction (see definition \ref{eq:inclination-k}).}
\label{fig:3dcomplex-initfinal}
\end{figure}

As it will be shown in a moment, the droplet dynamics exhibits (simultaneously) both sliding and rolling regimes. Since the interface is moved with the material velocity (recall equation (\ref{eq:practical-w-construct})), the mesh quality may be reduced after some time. Hence, we introduce a mesh quality test which is performed every five time steps: we test the mesh quality by evaluating the edge ratio $q_e$ and the aspect ratio $q_a$. The edge ratio of a single tetrahedron is defined as the ratio of its longest and shortest edge, and $q_e$ is the maximum value over all tetrahedrons in mesh. The aspect ratio of a single tetrahedron $K$ is the ratio of its longest edge $h_{\textrm{max}}(K)$ and the radius of an inscribed sphere $r(K)$, $q_a(K)=h_{\textrm{max}}(K) / (2\sqrt{6}\;r(K))$. The aspect ratio of the mesh is then the maximum value of $q_a(K)$ over all tetrahedrons $K$. The mesh adaptation criterion is then defined by: $q_e>3$ or $q_a>4$. In this case, an automatic mesh adaptation is performed with an aim of a uniform mesh with mesh parameter $h=0.15$, i.e. the aim is the mesh with the similar quality as the initial mesh. Automatic mesh adaptation is performed within FreeFem++ with a {\it built in} meshing tool {\it Mmg} (\cite{dapogny14}). Relative volume oscillations do not exceed the order of $10^{-4}$ throughout the whole simulation.

To understand the droplet dynamics better, we decompose the velocity at point $\bfp\in\Omega$, $\bfv_{\bfp}$, into the translational velocity of the {\it center of mass}, $\bfv_\textrm{cm}$, and the rotational velocity, $\bfv_{\bfp,\textrm{cm}}$, i.e.
\begin{equation}\label{eq;vel-decomp}
\bfv_{\bfp} = \bfv_\textrm{cm} + \bfv_{\bfp,\textrm{cm}}\;\textrm{ with }\: \bfv_{\textrm{cm}} = \frac{1}{\vert\Omega\vert}\int\limits_{\partial\Omega}\bfx \bfv\cdot\bfn\dS\;,
\end{equation}
where $\vert\Omega\vert$ denotes the volume of $\Omega$. In Figure~\ref{fig:rot-vel-glyp-stream} the rotational velocity $\bfv_{\bfp,\textrm{cm}}$ is shown with glyphs and streamlines representations. It may be observed that in coordinate system which follows the center of mass of the droplet, droplet exhibits solid body--like rotation around its center of mass. We mention that, throughout the simulations, we have observed that the rotational component of the velocity gets smaller by increasing the Laplace number $\noLa$. More specifically, for larger Laplace numbers the droplet dynamics regime is dominantly sliding. These results, however, are not reported here for the sake of conciseness and since such dynamics is less complex and attractive. 

\begin{figure}[h]
\subfloat[The rotational component of the fluid velocity, $\bfv_{\bfp,\textrm{cm}}$, at $t=12$. Glyphs representation.]{\includegraphics[width=0.48\linewidth]{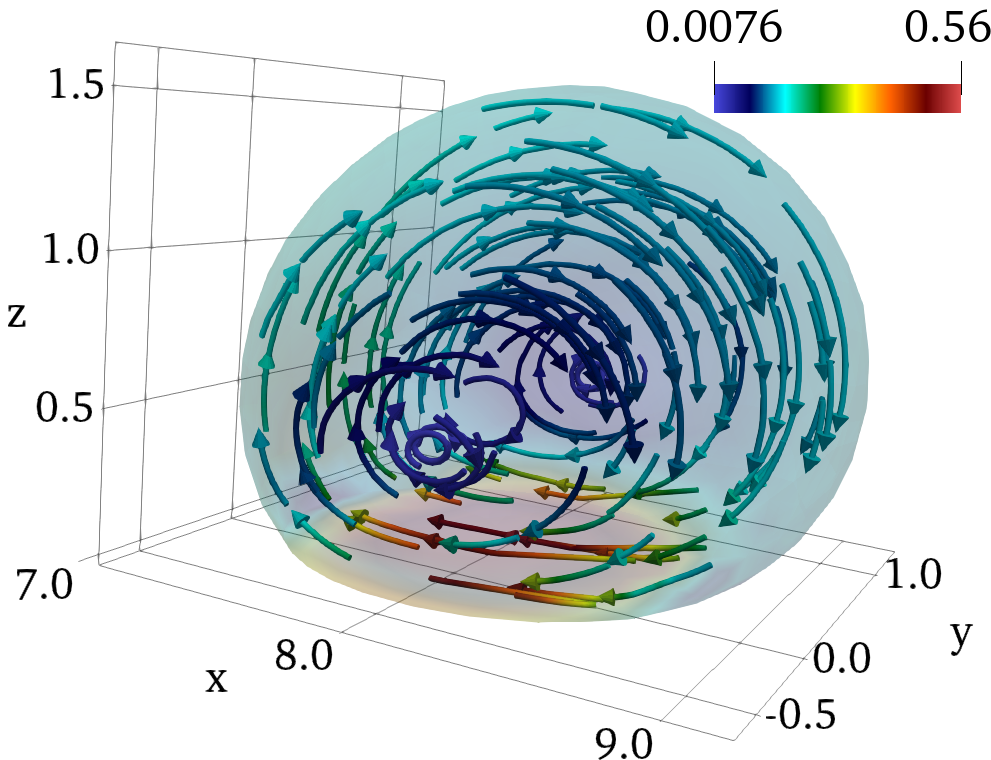}\label{fig:3Dglyphs}}\hspace{0.04\linewidth}\subfloat[Velocity streamlines of the rotational component of the fluid velocity, $\bfv_{\bfp,\textrm{cm}}$, at $t=12$.]{\includegraphics[width=0.48\linewidth]{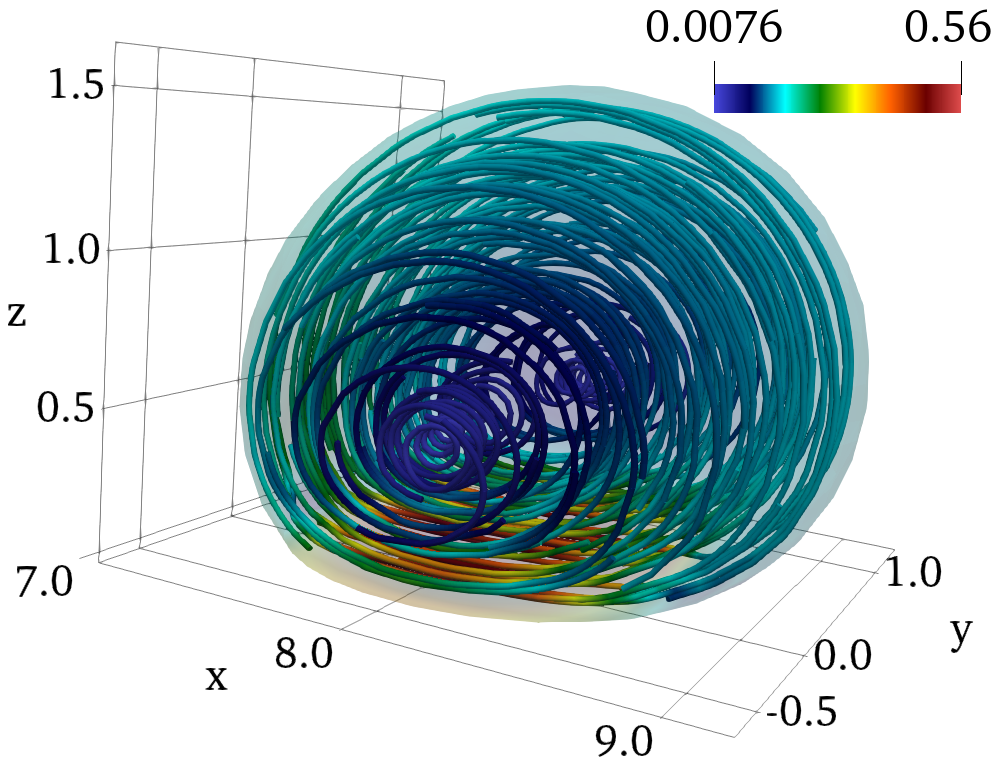}\label{fig:3Dstreams}}

\caption{Glyph and streamline representations of the rotational component of the fluid velocity, $\bfv_{\bfp,\textrm{cm}}$, defined by equation (\ref{eq;vel-decomp}). Color indicates the magnitude. The corresponding translational component of the velocity at $t=12$ equals $\bfv_\textrm{cm}={[0.89,0.05,2\times 10^{-3}]^T}$.}
\label{fig:rot-vel-glyp-stream}
\end{figure}

In Figure~\ref{fig:3Dstream-rot-xz}, streamlines of the field composed of the $x$ and $z$ components of the rotational velocity and the total $y$ velocity component ($[(\bfv_{\bfp,\textrm{cm}})_x,(\bfv_{\bfp})_y,(\bfv_{\bfp,\textrm{cm}})_z]^T$) is shown. The aim is to show that the flow direction is indeed off the inclination direction and pulled towards the area of $xy$--plane with stronger wetting properties (i.e. smaller contact angle).  Figure~\ref{fig:3Dstream-rot-xz} should be compared with Figure~\ref{fig:cont_angle} to put in perspective. Figure~\ref{fig:3Dside-view} shows the side view of the droplet at time $t=12$, i.e. the view perpendicular to $xz$--plane. Clear difference between advancing and receding contact angles can be observed.

\begin{figure}[h]
\subfloat[Velocity streamlines at time $t=12$ of the field ${[(\bfv_{\bfp,\textrm{cm}})_x,(\bfv_{\bfp})_y,(\bfv_{\bfp,\textrm{cm}})_z]^T}$. View onto $yz$--plane. Color indicates the magnitude.]{\includegraphics[width=0.48\linewidth]{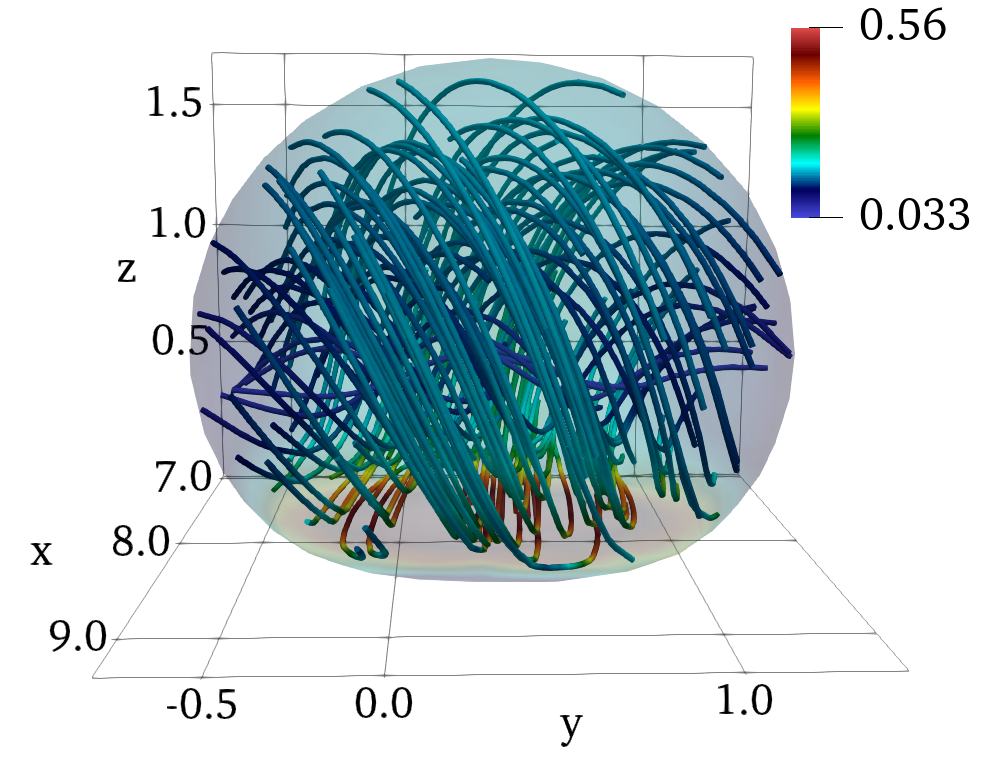}\label{fig:3Dstream-rot-xz}}\hspace{0.04\linewidth}\subfloat[Droplet profile at time $t=12$ from the side, view onto $xz$--plane. Difference between the advancing and receding contact angles may be observed.]{\includegraphics[width=0.48\linewidth]{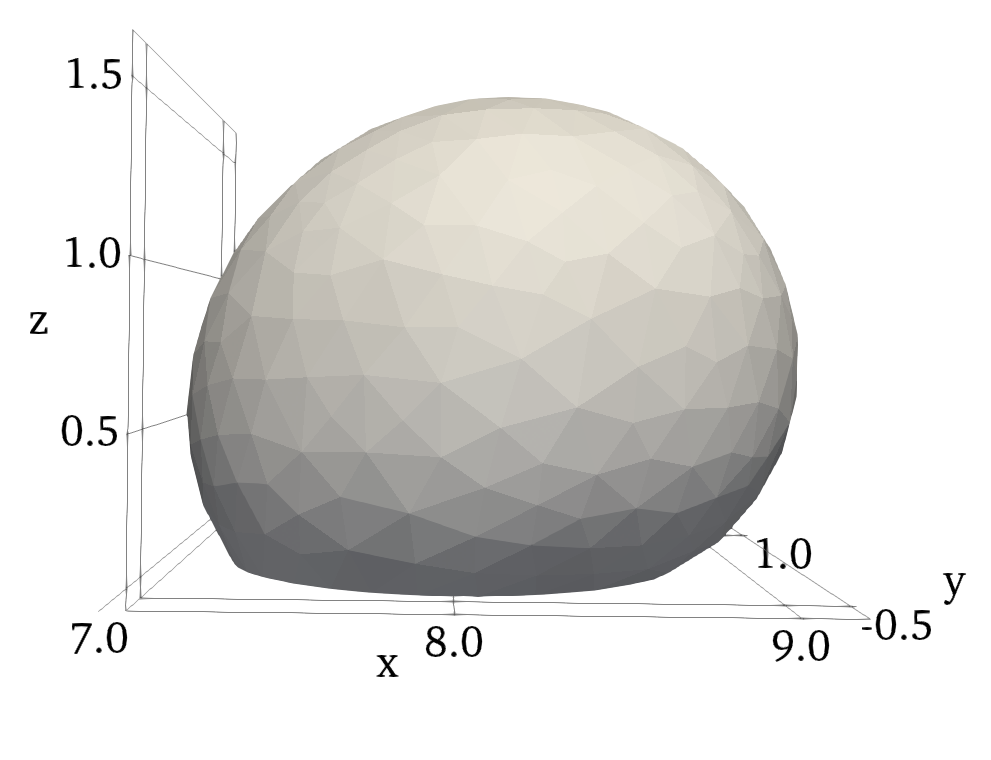}\label{fig:3Dside-view}}

\caption{Streamlines representation of the field composed of the $x$ and $z$ rotational components of the fluid velocity, and the total $y$ component, ${[(\bfv_{\bfp,\textrm{cm}})_x,(\bfv_{\bfp})_y,(\bfv_{\bfp,\textrm{cm}})_z]^T}$ (\ref{fig:3Dstream-rot-xz}). On the right, the profile of the droplet at $t=12$ from the side view (perpendicular to $xz$--plane) is shown. Note the difference between the advancing and receding contact angles.}
\label{fig:side-rot-vel-stream-stream}
\end{figure}

Finally, in Figure~\ref{fig:3d-energies-remesh}, the stability of the proposed scheme is analyzed. Figure~\ref{fig:3d-ddt-totalE} shows discrete energy balance ${\cal E}_{\Delta t,h}$ defined by (\ref{eq:discr-count-eb}), and it should be compared with Figure~\ref{fig:3d-euler-diss} which shows Euler dissipation defined by 
\begin{equation}\label{eq:euler-dissipation}
\frac{1}{\Delta t}\int\limits_{\Omega_h^n} \frac{1}{2}\vert\bfv_{h,n}^{n+1}-\bfv_{h}^{n}\vert^2\dbfx = \frac{1}{\Delta t}\left(
\int\limits_{\Omega_h^n}\frac{1}{2}\vert\bfv_{h,n}^{n+1}\vert^2\dbfx
- \int\limits_{\Omega_h^n}\bfv_{h,n}^{n+1}\cdot\bfv_{h}^{n}\dbfx
+ \int\limits_{\Omega_h^n}\frac{1}{2}\vert\bfv_{h}^{n}\vert^2\dbfx\right).
\end{equation}
Euler dissipation appears due to temporal discretization and it is a stabilizing property. It plays an essential role in the discrete energy estimate in Proposition~\ref{prop:discrete-energy-balance}, specifically, during the estimation of the transient term (see inequality (\ref{ineq:transient-term})). By definition, Euler dissipation is always non--negative and the lowest value achieved during the simulation is of order $10^{-4}$. This explains the stability of the scheme. Indeed, applying the Crank--Nicolson quadrature rule for SCLs on $\Sigma_h$ and $\Gamma_h$ in (\ref{prop:assumptions1-odt}), which are the only possible sources of spurious energy, the errors do not exceed the order of $10^{-6}$. Thus, the errors due to the SCLs on $\Sigma_h$ and $\Gamma_h$ are sufficiently small to be absorbed by Euler dissipation.

The evolution of discrete kinetic, potential ,surface tension and wetting energies is shown in Figure~\ref{fig:3d-energies-all}, and the discrete viscous and friction powers in Figure~\ref{fig:3d-power}. Carefully comparing these two figures, the process of transforming the potential energy into the kinetic energy and viscous work (and the rest) can be observed. Specially, one may note how the increase in wetting energy (i.e. in wetting area) and kinetic energy increases the friction power. This is also in agreement with the physical intuition.

The mesh adaptation frequency can be reduced in practice by relaxing the mesh quality criteria -- we imposed quite strict quality criteria in order to ensure that re--meshing will not affect the energy stability in a negative way. Better mesh adaptation procedures, which would result in better mesh quality after each adaptation and consequently reduce the re--meshing frequency, could be also implemented. As mentioned above, we used general FreeFem++ built--in algorithms for automatic mesh adaptation to reduce the implementation complexity.

We have also run the same simulation with a smaller time step ($\Delta t=0.01$) which resulted in essentially the same configuration at the final time $t=12$. Energy balance was improved as expected since error due to SCLs on $\Sigma_h$ and $\Gamma_h$ in (\ref{prop:assumptions1-odt}) was reduced.

\begin{figure}[h]
\subfloat[Evolution of quantity ${\cal E}_{\Delta t,h}$ and the re--meshing occurrences.]{\includegraphics[width=0.48\linewidth]{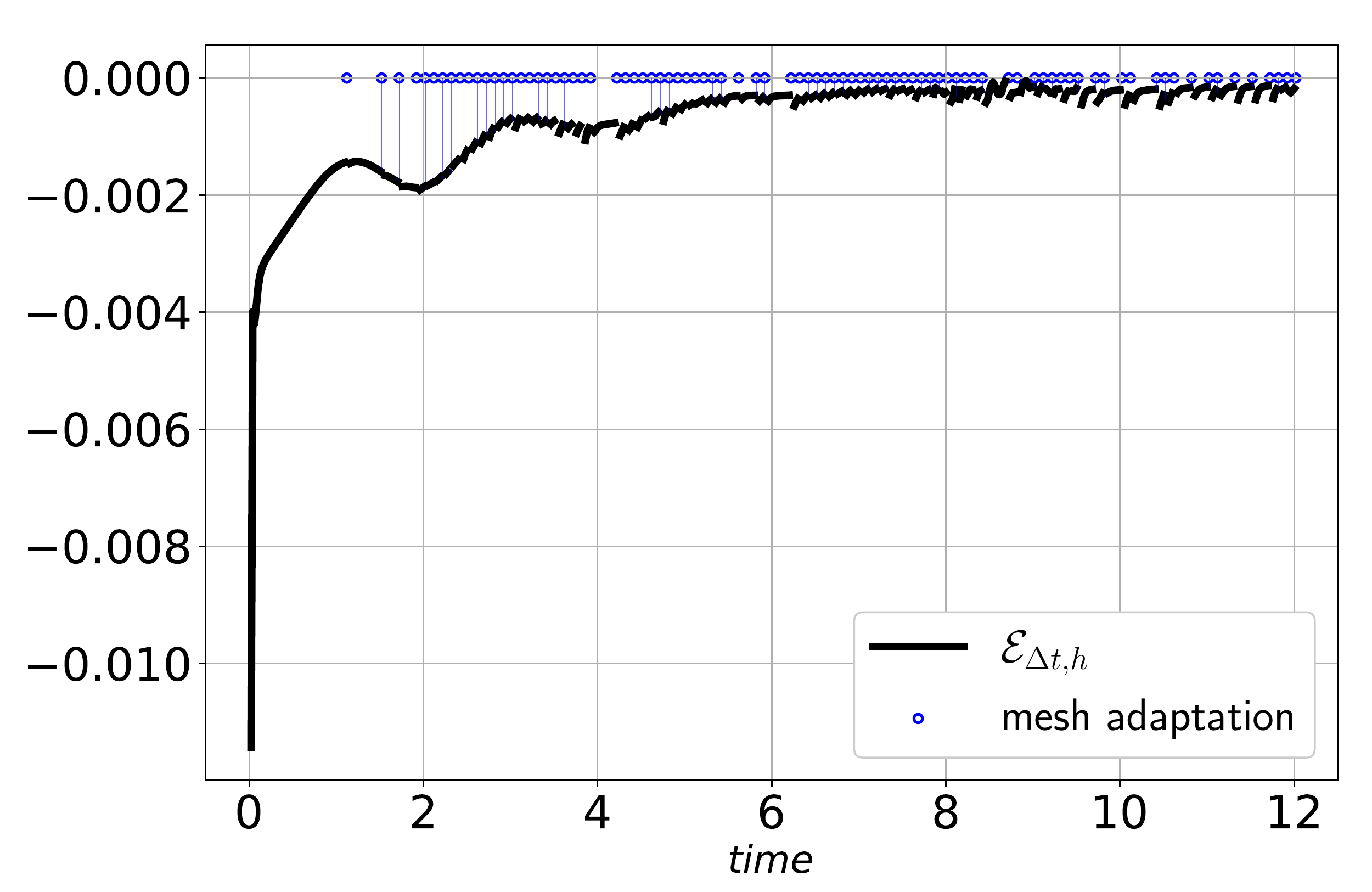}\label{fig:3d-ddt-totalE}}\hspace{0.04\linewidth}\subfloat[Evolution of discrete kinetic, potential, wetting and surface tension energies; $E_{k,h}$, $E_{p,h}$, $E_{w,h}$, $E_{fs,h}$.]{\includegraphics[width=0.48\linewidth]{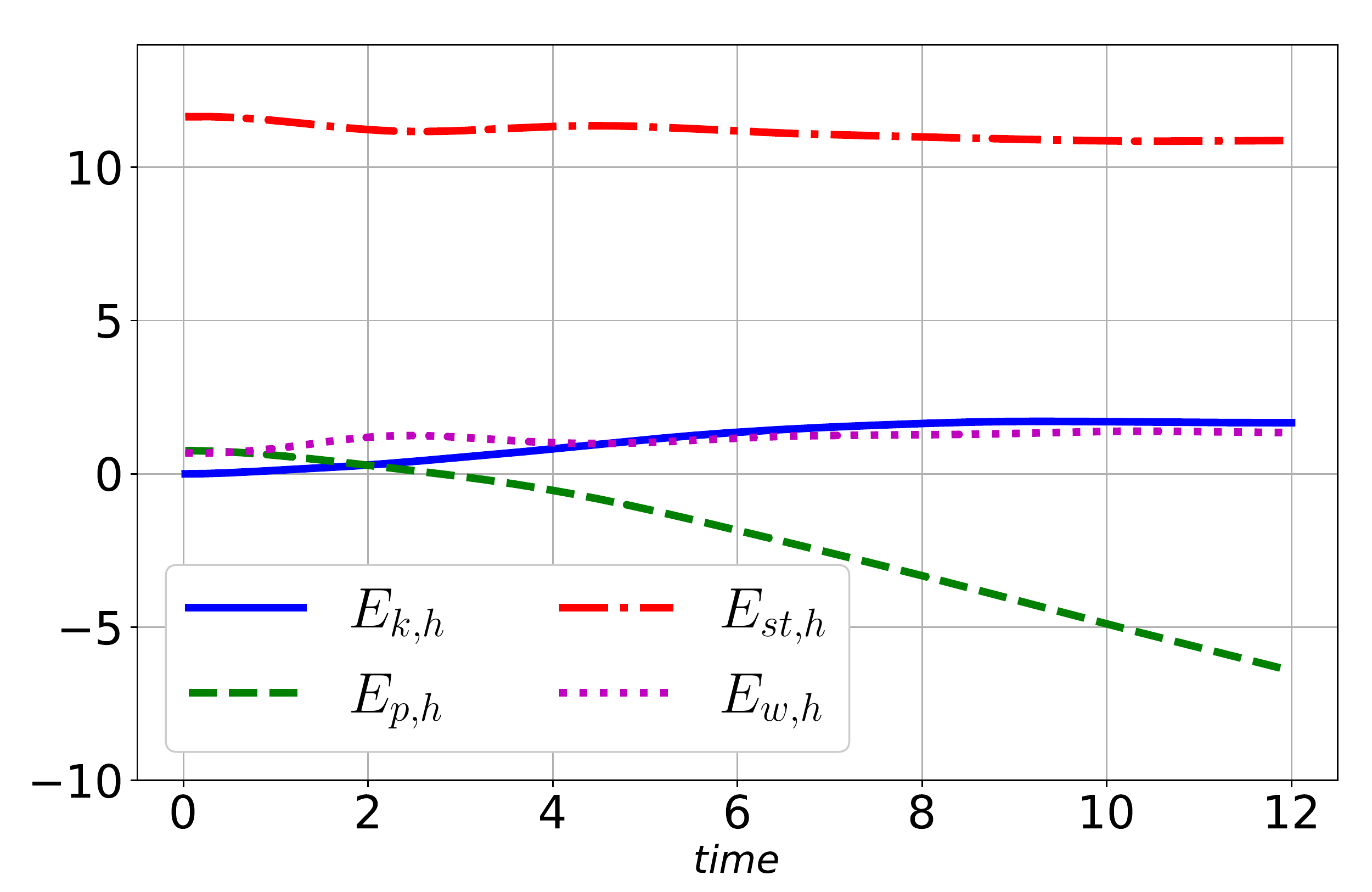}\label{fig:3d-energies-all}}

\subfloat[Evolution of Euler dissipation and the re--meshing occurrences.]{\includegraphics[width=0.48\linewidth]{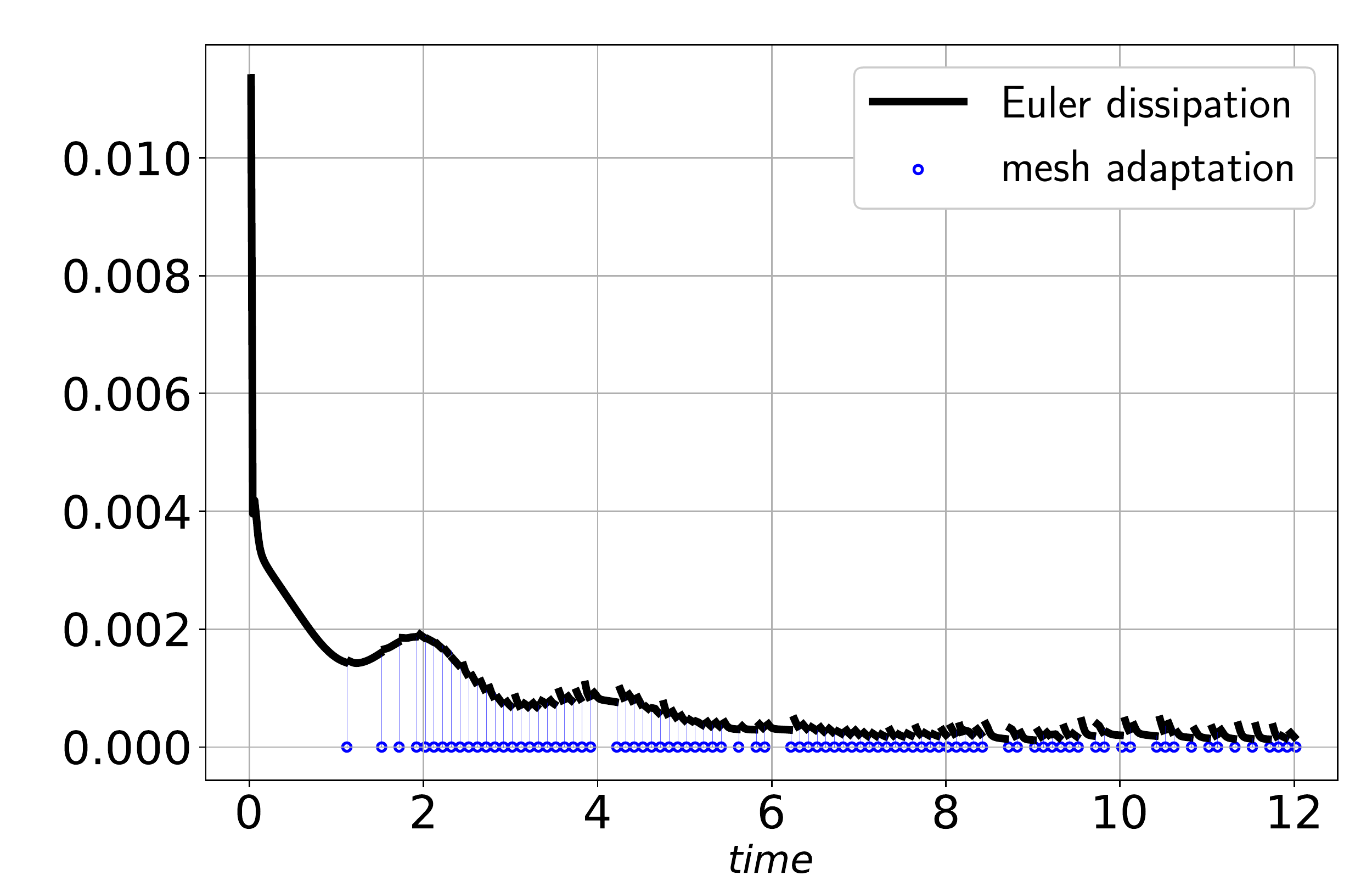}\label{fig:3d-euler-diss}}\hspace{0.04\linewidth}\subfloat[Evolution of discrete friction and viscous powers; $P_{fr}$, $P_v$.]{\includegraphics[width=0.48\linewidth]{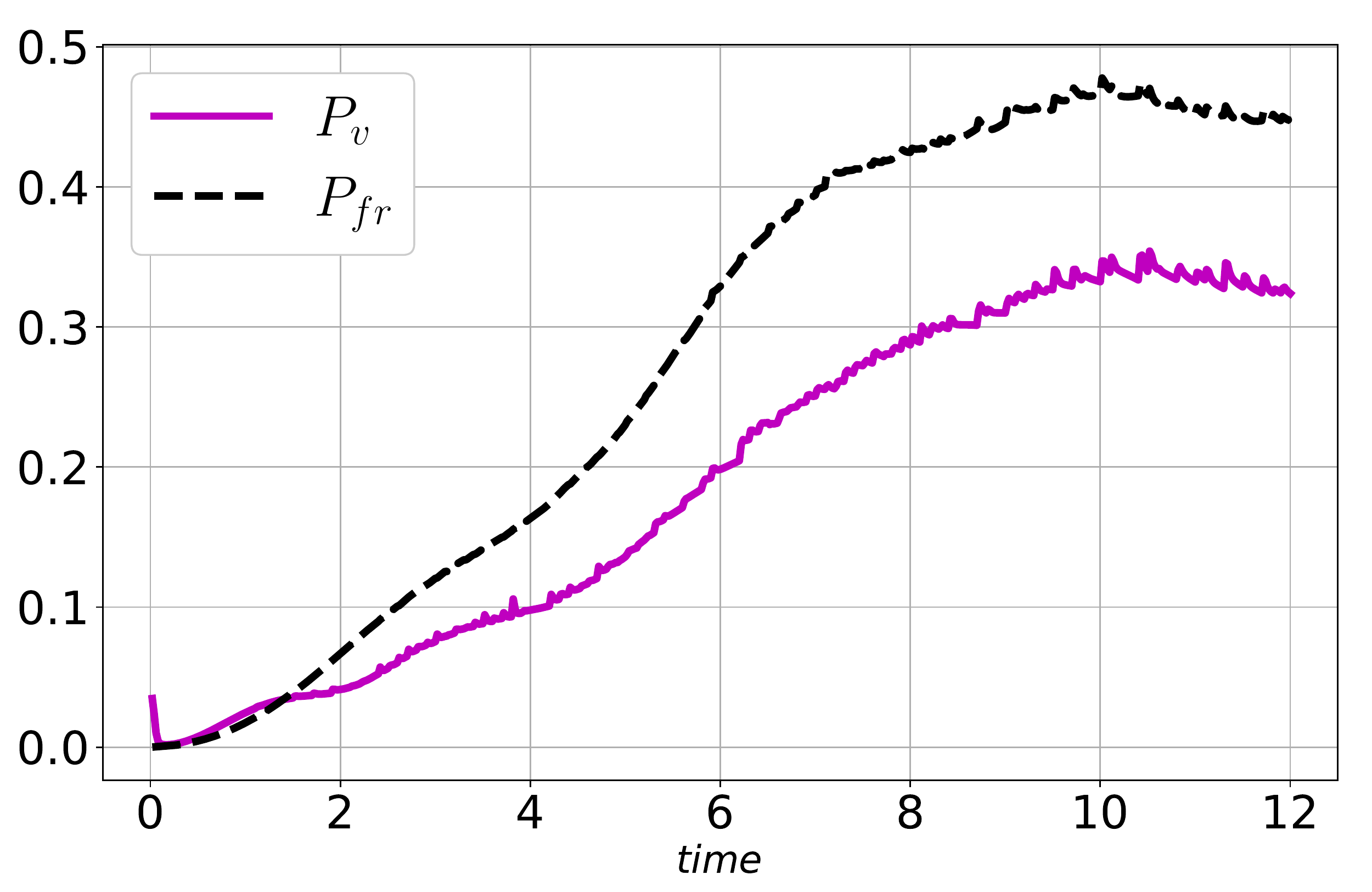}\label{fig:3d-power}}

\caption{Discrete energy balance ${\cal E}_{\Delta t,h}$ (\ref{eq:discr-count-eb}) in (\ref{fig:3d-ddt-totalE}). The evolution of discrete kinetic, potential, wetting and surface tension energies in (\ref{fig:3d-energies-all}). The evolution of friction and viscous powers in (\ref{fig:3d-power}). The evolution of Euler dissipation in (\ref{fig:3d-euler-diss}). The discontinuities in ${\cal E}_{\Delta t,h}$ and Euler dissipation profiles (\ref{fig:3d-ddt-totalE}) occur at the mesh adaptation steps (indicated by circles). $\Delta t=0.02$ and $h=0.15$.}
\label{fig:3d-energies-remesh}
\end{figure}

\section{Conclusion}\label{sec:conclusion}
We presented a novel ALE FEM scheme for the free-surface flows with moving contact line modeled through generalized Navier boundary conditions. The total energy balance equation has been derived from the governing system. It has been shown theoretically (and confirmed numerically) that the newly proposed scheme preserves the energy balance on the discrete level. Moreover, the discrete energy balance is preserved irrespectively of the choice of the time step $\Delta t$ (provided that $\Delta t$ is sufficiently small for the Newton's linearization method to converge). This ensures that no spurious energy, which may potentially cause the scheme blowup and non--physical behavior in long time simulations, is brought into the system. Numerical validation of theoretical results has been successfully performed. The proposed scheme may be applied to quite general cases of the free surface flows. The focus in this work was, however, exclusively on millimetric droplets in contact with solid surfaces. The theoretical results are valid for the cases of supporting surfaces which exhibit non--homogeneous properties and may be inclined. In this paper we only focused on cases where supporting surface is a plane in three-dimensional space. However, the scheme may be extended for more general, curved supporting surfaces. We have investigated the scheme capabilities on a fairly complex scenario which resulted in both sliding and rolling droplet dynamics.

The main drawback of the developed methodology in this paper is that the interface has to be evolved with the material velocity in order to guarantee the energy stability. In this scenario, when droplet dynamics starts to exhibit rolling regime the mesh quality reduces quickly and frequent mesh adaptation procedures are necessary. Generalization of the method proposed in this paper with a more general choice of the interface velocity aiming to reduce the remeshing frequency is currently under the investigation.

\section*{Acknowledgment}
This work was supported by National Health Research Institutes, Institute of Biomedical Engineering and Nanomedicine, Zhunan, Taiwan; project number BN--110--PP--08.

\appendix
\section{Transport theorems}
The following Reynolds transport theorem and its generalization for smooth manifolds of co--dimension $1$ are (often implicitly) used throughout this paper.
\begin{thm}[Reynolds transport theorem]\label{thm:RTT} 
Let $\Omega\subset\bbmR^d$ with sufficiently smooth boundary and let $\phi$ be a smooth spatial field, scalar or vector valued. Then, for any co--moving control volume $B\subset\Omega$
\begin{equation}
\begin{split}
& \frac{\td}{\dt}\int\limits_B\phi\dbfx =  \int\limits_B\Big( \frac{\tD \phi}{\Dt} + \phi\div\bfv \Big)\dbfx\textrm{ , and}\\
& \frac{\td}{\dt}\int\limits_B\phi\dbfx =  \int\limits_B\frac{\partial\phi}{\partial t}\dbfx +  \int\limits_{\partial B} \phi\bfv\cdot\bfn\dS,
\end{split}
\end{equation}
where $\bfv$ denotes the material velocity of $\Omega$ and $\tD/\Dt$ the Lagrangian derivative with respect to $\bfv$.
\end{thm}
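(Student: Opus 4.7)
The plan is to reduce the problem to a fixed reference domain via the Lagrangian flow of $\bfv$, then differentiate under the integral sign and apply Jacobi's formula for the time derivative of the Jacobian determinant. The first identity falls out directly; the second follows by expanding the material derivative and invoking the divergence theorem.

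First, I would introduce the flow map $\Psi_t\colon B_0 \to B(t)$ of the material velocity, i.e. the solution of $\partial_t\Psi_t(\mathbf{y}) = \bfv(\Psi_t(\mathbf{y}),t)$ with $\Psi_0 = \mathrm{id}$, where $B_0$ is the initial configuration of the co--moving control volume. Let $J(\mathbf{y},t) = \det(\nabla_{\mathbf{y}}\Psi_t(\mathbf{y}))$ be its Jacobian determinant. The change of variables formula gives
\[
\int\limits_{B(t)} \phi(\bfx,t)\dbfx = \int\limits_{B_0} \phi(\Psi_t(\mathbf{y}),t)\, J(\mathbf{y},t)\, d\mathbf{y},
\]
and since $B_0$ is independent of $t$, the time derivative can be passed inside the integral on the right-hand side.

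Second, the chain rule applied to $\phi(\Psi_t(\mathbf{y}),t)$ produces $\tfrac{\tD\phi}{\Dt}\circ\Psi_t$, while Jacobi's formula yields $\partial_t J = J\,(\div\bfv)\circ\Psi_t$. Collecting the two contributions and changing variables back to $B(t)$ gives
\[
\frac{\td}{\dt}\int\limits_{B(t)}\phi\dbfx = \int\limits_{B(t)}\Big(\frac{\tD\phi}{\Dt} + \phi\div\bfv\Big)\dbfx,
\]
which is the first identity. For the second identity, I would expand $\tfrac{\tD\phi}{\Dt} = \partial_t\phi + \bfv\cdot\nabla\phi$, combine $\bfv\cdot\nabla\phi + \phi\div\bfv = \div(\phi\bfv)$, and apply the divergence theorem on $B(t)$ (componentwise in the vector-valued case) to convert the resulting term into the boundary flux $\int_{\partial B(t)}\phi\bfv\cdot\bfn\dS$.

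The only nontrivial step is Jacobi's formula. I would derive it from the general identity $\partial_t\det F = \det(F)\,\mathrm{tr}(F^{-1}\partial_t F)$ with $F = \nabla_{\mathbf{y}}\Psi_t$, observing that $\partial_t F = \nabla_{\mathbf{y}}(\bfv\circ\Psi_t) = ((\nabla_{\bfx}\bfv)\circ\Psi_t)\,F$, so that $F^{-1}\partial_t F = (\nabla_{\bfx}\bfv)\circ\Psi_t$ and its trace equals $(\div\bfv)\circ\Psi_t$. Given the smoothness assumed on $\bfv$ and $\phi$ (so that $\Psi_t$ is a $C^1$ diffeomorphism and $F$ is invertible for small $t$), the interchange of differentiation and integration and the use of the divergence theorem are standard, so no further difficulty arises.
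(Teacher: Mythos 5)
Your argument is correct and complete: pulling back to the reference configuration via the Lagrangian flow, applying Jacobi's formula for the time derivative of the Jacobian determinant, and then converting the divergence term to a boundary flux is exactly the classical proof of the Reynolds transport theorem. The paper itself does not prove this statement — it only cites Gurtin's book for the proof — and your derivation is the same standard argument found there, so there is nothing to fault and no genuine divergence of approach to report.
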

For intuition behind Theorem~\ref{thm:RTT} and proof, see \cite{gurtin81}.

\begin{thm}\label{thm:generalizedRTT}
Let $\Sigma\subset\bbmR^d$ be a smooth manifold of co--dimension 1, and let $\phi$ be a smooth surface field on $\Sigma$, scalar or vector valued. Then, for any co--moving control submanifold (of co--dimension 1) $\omega\subset\Sigma$
\begin{equation}
\begin{split}
&\frac{\td}{\dt}\int\limits_\omega\phi\dS = \frac{\td}{\dt}\int\limits_\omega\Big( \frac{\tD \phi}{\Dt} + \phi\div_\Sigma\bfv \Big)\dS\textrm{ , and}\\
&\frac{\td}{\dt}\int\limits_\omega\phi\dS = \int\limits_\omega \Big( \frac{\partial\phi}{\partial t} + V\bfn\cdot\nabla\phi - 2H V\phi \Big) \dS + \int\limits_{\partial\omega}\phi\bfv\cdot\bfm_{\partial\omega}\ds,
\end{split}
\end{equation}
where $\bfm_{\partial\omega}$ denotes the co--normal to $\partial\omega$, $\bfv$ is the material velocity of $\Sigma$, $V=\bfv\cdot\bfn$ its normal component, $2H=-\div_\Sigma\bfn$ is the mean curvature of $\Sigma$, and $\tD/\Dt$ is the $\Sigma$--Lagrangian derivative with respect to $\bfv$.
\end{thm}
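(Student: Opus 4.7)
The plan is to establish the first identity by pulling back to a fixed reference configuration via the Lagrangian flow on $\Sigma$, and then to convert that Lagrangian form into the stated Eulerian form by splitting $\bfv$ into normal and tangential parts and invoking the surface divergence theorem. I would introduce the material flow map $\chi_t\colon\omega_0\to\omega(t)\subset\Sigma(t)$ generated by $\bfv$, where $\omega_0$ is a fixed reference submanifold, and write $\dS = J_\Sigma(t)\,\dS_0$ with $J_\Sigma$ the determinant of the differential of $\chi_t$ in the induced metric. A direct computation using the evolution formula for the pulled--back Gram determinant gives the surface Euler identity $\partial_t J_\Sigma = J_\Sigma\,\div_\Sigma\bfv$. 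Differentiating under the integral on $\omega_0$ and combining with the chain rule on $\phi\circ\chi_t$ yields
\begin{equation*}
\frac{\td}{\dt}\int\limits_\omega\phi\,\dS \;=\; \int\limits_\omega\Big(\frac{\tD\phi}{\Dt}+\phi\,\div_\Sigma\bfv\Big)\dS,
\end{equation*}
which is the content of the first line of the theorem (the outer $\frac{\td}{\dt}$ on the right appears to be a typographical duplication).

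For the second identity, I would decompose $\bfv = V\bfn+\bfv_\parallel$ with $\bfv_\parallel=\PS\bfv$ and compute both ingredients of the integrand in this decomposition. Since $\nabla_\Sigma V$ is tangent to $\Sigma$ and $\div_\Sigma\bfn = -2H$, one has $\div_\Sigma(V\bfn) = -2HV$, so $\div_\Sigma\bfv = \div_\Sigma\bfv_\parallel - 2HV$. Likewise, treating $\phi$ as a smooth ambient extension of the surface field, $\tD\phi/\Dt = \partial_t\phi + V\bfn\cdot\nabla\phi + \bfv_\parallel\cdot\nabla_\Sigma\phi$. Applying the product rule $\div_\Sigma(\phi\,\bfv_\parallel) = \bfv_\parallel\cdot\nabla_\Sigma\phi + \phi\,\div_\Sigma\bfv_\parallel$ collects the tangential contributions cleanly as
\begin{equation*}
\frac{\tD\phi}{\Dt}+\phi\,\div_\Sigma\bfv \;=\; \frac{\partial\phi}{\partial t} + V\bfn\cdot\nabla\phi - 2HV\phi + \div_\Sigma(\phi\,\bfv_\parallel).
\end{equation*}
To finish, I would apply the surface divergence theorem for tangential fields to $\phi\,\bfv_\parallel$ on $\omega$; because the co--normal $\bfm_{\partial\omega}$ lies in $\bfT\Sigma$, we have $\bfv\cdot\bfm_{\partial\omega} = \bfv_\parallel\cdot\bfm_{\partial\omega}$, producing the boundary contribution $\int_{\partial\omega}\phi\,\bfv\cdot\bfm_{\partial\omega}\,\mathrm{d}s$ and yielding exactly the stated Eulerian formula.

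The main obstacle is the surface Euler identity $\partial_t J_\Sigma = J_\Sigma\,\div_\Sigma\bfv$: although classical, it is noticeably more delicate than its volumetric analogue because the normal component of $\bfv$ enters through the second fundamental form of $\Sigma$ rather than through a local expansion rate, and one has to choose a local frame along $\chi_t$ to make the variation of the pull--back metric computable. A secondary subtlety, invisible in the bulk theorem, is the interpretation of $\tD\phi/\Dt$ and of $\bfn\cdot\nabla\phi$ when $\phi$ is defined only on $\Sigma(t)$; one must either posit a smooth ambient extension (so that the normal derivative is a well--defined ingredient, as implicitly assumed in the statement) or replace $\tD/\Dt$ by the intrinsic Thomas derivative and absorb the change of extension into $V\bfn\cdot\nabla\phi$. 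Once these conventions are fixed, the remaining steps are purely algebraic manipulations and the surface divergence theorem.
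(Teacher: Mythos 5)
The paper offers no proof of this theorem: it is stated as a known transport identity and the reader is referred to the literature (\cite{gurtin81} for the bulk version, \cite{dziuk07,murdoch05,murdoch90,gurtin02} for the meaning of the surface time derivatives), so there is no in--paper argument to compare against. Your outline is a correct and essentially standard proof. The Lagrangian pull--back with the surface Euler identity $\partial_t J_\Sigma = J_\Sigma\,\div_\Sigma\bfv$ gives the first line, and your algebra for the second line checks out: $\div_\Sigma(V\bfn)=-2HV$ under the paper's convention $2H=-\div_\Sigma\bfn$, the product rule isolates $\div_\Sigma(\phi\,\PS\bfv)$, and since $\phi\,\PS\bfv$ is tangential the surface divergence theorem produces only the co--normal boundary term with no extra curvature contribution, while $\bfv\cdot\bfm_{\partial\omega}=(\PS\bfv)\cdot\bfm_{\partial\omega}$ because $\bfm_{\partial\omega}\in\bfT\Sigma$. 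You also correctly flag the two points that deserve care: the repeated $\frac{\td}{\dt}$ on the right of the first identity is indeed a typographical slip (compare with Theorem~\ref{thm:RTT}), and the terms $\partial\phi/\partial t$ and $\bfn\cdot\nabla\phi$ only make sense once an ambient extension or an intrinsic (Thomas--type) derivative is fixed, which is exactly why the paper points to \cite{dziuk07,murdoch05,murdoch90,gurtin02}. The one place where a referee would ask for more is the identity $\partial_t J_\Sigma = J_\Sigma\,\div_\Sigma\bfv$ itself, which you state but do not derive; writing out the variation of the Gram determinant of the pulled--back tangent frame (or citing Dziuk--Elliott) would close that gap. For a vector--valued $\phi$ the argument applies componentwise, as implicitly assumed.
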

For the definition of partial and material derivatives with respect to $t$ variable for surface fields in Theorem~\ref{thm:generalizedRTT}, we refer to \citep{dziuk07,murdoch05,murdoch90,gurtin02}. Indeed, such derivatives are not straightforward to define since the domain is $(d-1)$--dimensional which is embedded and evolves in $d$--dimensional space.
\section{Discrete space conservation laws}\label{apx:dSCL}
Let us briefly discuss the discrete space conservation laws. They play a central role in the construction of the energy stable scheme in Section~\ref{sec:energy-stable-scheme}. In what follows, we intensively exploit the notation introduced in Section~\ref{sec:ALE}. The following two definitions define space conservation laws within FEM.
\begin{definition}[Space conservation law (SCL)]
Let $\Omega_h^n$ and $\Omega_h^{n+1}$ be two configurations at times $t_n$ and $t_{n+1}$. Furthermore, assume $\Omega_h^{n+1}$ is obtained from $\Omega_h^n$ via deformation map $\ALE_h^{[n,n+1]}$ re-constructed from velocity $\bfw_h^{n+1}$. We say that quadrature rule in variable $t$, $\intInn1$, satisfies discrete space conservation law if the following holds exactly:
\begin{equation}\label{defn:scl0}
\int\limits_{\Omega_h^{n+1}}\psi_h\dbfx - \int\limits_{\Omega_h^{n}}\psi_h\dbfx = \intInn1 \int\limits_{\Omega_h(t)} \psi_h\div\bfw_h^{n+1} \dbfx\;,
\end{equation}
for all $\psi_h$ belonging to a FEM space constructed over triangulation of $\kOmega_h$.
\end{definition}

\begin{definition}[Generalized SCL for discrete manifolds of co-dimension 1]
Let $\Sigma_h^n$ and $\Sigma_h^{n+1}$ be two $(d-1)$-dimensional configurations at times $t_n$ and $t_{n+1}$, embedded in $\bbmR^d$. Furthermore, assume $\Sigma_h^{n+1}$ is obtained from $\Sigma_h^n$ via deformation map $\ALE_h^{[n,n+1]}$ re-constructed from velocity $\bfw_h^{n+1}$. We say that quadrature rule in variable $t$, $\intInn1$, satisfies discrete space conservation law if the following holds exactly:
\begin{equation}\label{defn:scl1}
\int\limits_{\Sigma_h^{n+1}}\psi_h\dbfx - \int\limits_{\Sigma_h^{n}}\psi_h\dbfx = \intInn1 \int\limits_{\Sigma_h(t)} \psi_h\div_{\Sigma_h}\bfw_h^{n+1} \dbfx\;,
\end{equation}
for all $\psi_h$ belonging to a FEM space constructed over triangulation of $\widehat{\Sigma}_h$.
\end{definition}

Note that identities given in (\ref{prop:assumptions1}) represent various forms of SCLs. They may be derived from (\ref{defn:scl0}) and (\ref{defn:scl1}). 

We concisely review the methodology for constructing schemes with vanishing discrete SCL derived in \cite{ivancic19}. However, alternative SCL satisfying schemes may easily be integrated within the energy stable method derived in this paper. We illustrate the methodology derived in \cite{ivancic19} on the gravity term: for $t\in[0,\Delta t]$ and piecewise linear in time reconstruction of domain deformation, i.e. $\ALE_h^{[n+1,n]}(\bfx_h^{n+1})=\bfx_h^{n+1}-t\bfw_h^{n+1}(\bfx_h^{n+1})$,
\[\begin{split}
\int\limits_{\partial\Omega_h} \Phi_h\bfvarphi_h\cdot\bfn\dS =  \int\limits_{\partial\Omega_h^{n+1}}\frac{1}{\noFr^2}\Big( \bfk\cdot\bfx_h^{n+1} + (t-\Delta t)\bfk\cdot\bfw_h^{n+1} \Big)\bfvarphi\cdot\Cof\ALEF_{h,t}^{[n+1,n]}\bfn\dS.
\end{split}
\]
Above, $\Cof$ denotes the cofactor matrix. Noticing that right--hand integral is over a fixed--in--time domain, $\Omega_h^{n+1}$, and that the term under the integral sign is polynomial in time, it may be integrated exactly in time, i.e. we may take $\intInn1=\int_{t_n}^{t_{n+1}}$. An analogous approach may be taken in the rest of the SCL involving terms. However, using this approach for SCL on surfaces or curves in 3D space, the under integral term that appears is non--polynomial function in time and cannot be integrated exactly. Hence, $\intInn1$ has to be taken as some quadrature formula in single variable. While we only tried with Crank--Nicolson quadrature formula in numerical section, higher order formulas are straightforward to implement.

\section{}\label{apx:nomenclature}

\nomenclature[L,v]{\(\bfv\)}{fluid velocity}
\nomenclature[L,u]{\(\bfu\)}{supporting surface (\(\Gamma\)) velocity}
\nomenclature[L,w]{\(\bfw\)}{ALE velocity}
\nomenclature[L,A]{\(\ALE\)}{ALE map}
\nomenclature[L,p]{\(p\)}{fluid pressure}
\nomenclature[L,h]{\(\bfh\)}{mean curvature vector}
\nomenclature[L,H]{\(H\)}{mean curvature}
\nomenclature[G,k]{\(\kappa_{1,2}\)}{principal curvatures}
\nomenclature[L,k]{\(\bfk\)}{gravity vector}
\nomenclature[G,r]{\(\varrho\)}{fluid density}
\nomenclature[G,m]{\(\mu\)}{dynamic viscosity}
\nomenclature[G,g]{\(\gamma\)}{surface tension}
\nomenclature[G,s]{\(\varsigma\)}{friction slip coefficient}
\nomenclature[G,d]{\(\delta\)}{Dirac's delta distribution}
\nomenclature[G,F]{\(\Phi\)}{gravity potential}
\nomenclature[L,g]{\(g\)}{gravity acceleration}
\nomenclature[L,pc]{\(p_c\)}{characteristic pressure}
\nomenclature[L,U]{\(U\)}{characteristic velocity}
\nomenclature[L,L]{\(L\)}{characteristic length}
\nomenclature[L,tc]{\(t_c\)}{characteristic time}
\nomenclature[G,t]{\(\theta\)}{dynamic contact angle}
\nomenclature[G,ts]{\(\theta_s\)}{static contact angle}
\nomenclature[G,a]{\(\alpha\)}{supporting surface inclination angle}
\nomenclature[G,s]{\(\bfsigma\)}{fluid stress tensor}
\nomenclature[L,D]{\(\bbmD\)}{deviatoric stress tensor}
\nomenclature[G,O]{\(\Omega\)}{domain occupied by the liquid}
\nomenclature[G,G]{\(\Gamma\)}{solid--liquid interface}
\nomenclature[G,S]{\(\Sigma\)}{liquid--gas interface}
\nomenclature[G,e]{\(\eta\)}{solid--liquid--gas interface}
\nomenclature[L,P]{\(P_{\Sigma,\Gamma}\)}{projection operator}
\nomenclature[L,x]{\(\bfx_\Sigma\)}{embedding $\Sigma\hookrightarrow\bbmR^d$}
\nomenclature[L,d]{\(d\)}{space dimension ($d=2,3$)}
\nomenclature[L,n]{\(\bfn\)}{unit normal to boundary of $\Omega$}
\nomenclature[L,t]{\(\bft_{\eta}\)}{unit tangent to $\eta$}
\nomenclature[L,m]{\(\bfm\)}{unit co--normal to embedded surface}
\nomenclature[L,t]{\(t\)}{time variable}
\nomenclature[L,x]{\(\bfx\)}{space variable}
\nomenclature[L,E]{\({\cal E}_{\Delta t,h}\)}{discrete energy balance}
\nomenclature[G,Dt]{\(\Delta t\)}{time step}
\nomenclature[L,h]{\(h\)}{spatial discretization, mesh parameter}
\nomenclature[L,Ek]{\(E_k\)}{kinetic energy}
\nomenclature[L,Efs]{\(E_{fs}\)}{free surface energy}
\nomenclature[L,Ew]{\(E_w\)}{wetting energy}
\nomenclature[L,Ep]{\(E_p\)}{potential energy}
\nomenclature[L,Pv]{\(P_v\)}{viscous power}
\nomenclature[L,Pfr]{\(P_{fr}\)}{friction power}
\nomenclature[O,V]{\(\sV(\Omega)\)}{function space (velocity)}
\nomenclature[O,Q]{\(\sQ(\Omega)\)}{function space (pressure)}
\nomenclature[O,A]{\(\sA(\Omega)\)}{function space (ALE map)}

\begin{multicols}{2}
\printnomenclature
\end{multicols}

\newpage
\bibliography{references}

\begin{thebibliography}{10}

\bibitem{blossey03}
R.~Blossey.
\newblock Self-cleaning surfaces - virtual realities.
\newblock {\em Nat. Mater.}, 2(5):301--306, 2003.

\bibitem{venkatesan20}
S.~Venkatesan, J.~Jerald, P.~Asokan, and R.~Prabakaran.
\newblock {\em Recent Advances in Mechanical Engineering; Select Proceedings of
  NCAME 2019}, chapter A Comprehensive Review on Microfluidics Technology and
  its Applications.
\newblock Springer Nature Singapore Pte Ltd., 2020.

\bibitem{malinowski20}
R.~Malinowski, I.~P. Parkin, and G.~Volpe.
\newblock Advances towards programmable droplet transport on solid surfaces and
  its applications.
\newblock {\em Chem. Soc. Rev.}, 49(22):7879--7892, 2020.

\bibitem{popinet2018}
S.~Popinet.
\newblock Numerical models of surface tension.
\newblock {\em Annu. Rev. Fluid Mech.}, 50:49--75, 2018.

\bibitem{sethian03}
J.~A. Sethian and P.~Smereka.
\newblock Level set methods for fluid interfaces.
\newblock {\em Annu. Rev. Fluid Mech.}, 35(1):341--372, 2003.

\bibitem{hirt81}
C.~W. Hirt and B.~D. Nichols.
\newblock Volume of fluid (vof) method for the dynamics of free boundaries.
\newblock {\em J. Comput. Phys.}, 39(1):201--225, 1981.

\bibitem{donea04}
J.~Donea, A.~Huerta, J.~P. Ponthot, and A.~Rod\'{i}guez-Ferran.
\newblock {\em Encyclopedia of Computational Mechanics}, chapter Arbitrary
  Lagrangian--Eulerian Methods, pages 413--437.
\newblock John Wiley and Sons, Ltd, 2004.

\bibitem{lozovskiy15}
A.~Lozovskiy, M.~A. Olshanskii, V.~Salamatova, and Y.~V. Vassilevski.
\newblock An unconditionally stable semi-implicit fsi finite element method.
\newblock {\em Comput. Methods Appl. Mech. Engrg.}, 297:437--454, 2015.

\bibitem{hecht17}
F.~Hecht and O.~Pironneau.
\newblock An energy stable monolithic eulerian fluid-structure finite element
  method.
\newblock {\em Int. J. Numer. Methods Fluids}, 85(7):430--446, 2017.

\bibitem{formaggia09}
M.~A. Fernand\'{e}z and J.~F. Gerbeau.
\newblock {\em Cardiovascular Mathematics; Modeling and simulation of the
  circulatory system}, chapter Algorithms for fluid--structure interaction
  problems.
\newblock Springer--Verlag Italia, 2009.

\bibitem{formaggia99}
F.~Nobile and L.~Formaggia.
\newblock A stability analysis for the arbitrary lagrangian eulerian
  formulation with finite elements.
\newblock {\em East-West J. Numer. Math.}, 7(2):105--131, 1999.

\bibitem{ivancic19}
F.~Ivan{\v{c}}i{\'c}, T.~W.~H. Sheu, and M.~Solovchuk.
\newblock Arbitrarty lagrangian eulerian-type finite element methods
  formulation for pdes on time-dependent domain with vanishing discrete space
  conservation law.
\newblock {\em SIAM J. Sci. Comput.}, 41(3):A1548--A1573, 2019.

\bibitem{gerbeau09}
J.~F. Gerbeau and T.~Leli{\'e}vre.
\newblock Generalized navier boundary conditions and geometric conservation law
  for surfac etension.
\newblock {\em Comput. Methods Appl. Mech. Engrg.}, 198(5-8):644--656, 2009.

\bibitem{sadullah2020}
M.~S. Sadullah, G.~Launay, J.~Parle, R.~Ledesma-Aguilar, Y.~Gizaw, G.~McHale,
  G.~G. Wells, and H.~Kusumaatmaja.
\newblock Bidirectional motion of droplets on gradient liquid infused surfaces.
\newblock {\em Commun. Phys.}, 3(1):1--7, 2020.

\bibitem{mazaltarim2021}
A.~J. Mazaltarim, J.~J. Bowen, J.~M. Taylor, and S.~A. Morin.
\newblock Dynamic manipulation of droplets using mechanically tunable
  microtextured chemical gradients.
\newblock {\em Nat. Commun.}, 12(1):1--10, 2021.

\bibitem{qian03}
T.~Qian, X.~P. Wang, and P.~Sheng.
\newblock Molecular scale contact line hydrodynamics of immiscible flows.
\newblock {\em Phys. Rev. E}, 68(1):016306, 2003.

\bibitem{qian06}
T.~Qian, X.~P. Wang, and P.~Sheng.
\newblock Molecular hydrodynamics of the moving contact line in two-phase
  immiscible flows.
\newblock {\em Commun. Comput. Phys.}, 1(1):1--52, 2006.

\bibitem{qian06var}
T.~Qian, X.~P. Wang, and P.~Sheng.
\newblock A variational approach to moving contact line hydrodynamics.
\newblock {\em J. Fluid Mech.}, 564:333--360, 2006.

\bibitem{ren07}
W.~Ren and Weinan E.
\newblock Boundary conditions for the moving contact line problem.
\newblock {\em Phys. Fluids}, 19(2):022101, 2007.

\bibitem{ganesan09}
S.~Ganesan and L.~Tobiska.
\newblock Modelling and simulation of moving contact line problemswith wetting
  effects.
\newblock {\em Comput. Visual. Sci.}, 12(7):329--336, 2009.

\bibitem{willassen14}
{\O}.~Wind-Willassen and M.~P. S{\o}rensen.
\newblock A finite--element method model for droplets moving down a hydrophobic
  surface.
\newblock {\em Eur. Phys. J. E}, 37(7):1--9, 2014.

\bibitem{venkatesan16}
J.~Venkatesan, S.~Rajasekaran, A.~Das, and S.~Ganesan.
\newblock Effects of temperature--dependent contact angle on the flow dynamics
  of an impinging droplet on a hot solid substrate.
\newblock {\em Int. J. Heat Fluid Flow}, 62:282--298, 2016.

\bibitem{ivancic19-chemo}
F.~Ivan{\v{c}}i{\'c}, T.~W.~H. Sheu, and M.~Solovchuk.
\newblock The free surface effect on a chemotaxis--diffusion--convection
  coupling system.
\newblock {\em Comput. Methods Appl. Mech. Engrg.}, 356:387--406, 2019.

\bibitem{ivancic20-chemo}
F.~Ivan{\v{c}}i{\'c}, T.~W.~H. Sheu, and M.~Solovchuk.
\newblock Bacterial chemotaxis in thin fluid layers with free surface.
\newblock {\em Phys. Fluids}, 32(6):061902, 2020.

\bibitem{ivancic20}
F.~Ivan{\v{c}}i{\'c}, T.~W.~H. Sheu, and M.~Solovchuk.
\newblock Elimination of spurious velocities generated by curvature dependent
  surface force in finite element flow simulation with mesh-fitted interface.
\newblock {\em Comput. Methods Appl. Mech. Engrg.}, 372:113356, 2020.

\bibitem{ganesan07}
S.~Ganesan, G.~Matthies, and L.~Tobiska.
\newblock On spurious velocities in incompressible flow problems with
  interfaces.
\newblock {\em Comput. Methods Appl. Mech. Engrg.}, 196(7):1193--1202, 2007.

\bibitem{cenanovic20}
M.~Cenanovic, P.~Hansbo, and M.~G. Larson.
\newblock Finite element procedures for computing normals and mean curvature on
  triangulated surfaces and their use for mesh refinement.
\newblock {\em Comput. Methods Appl. Mech. Engrg.}, 372:113445, 2020.

\bibitem{nakajima11}
A.~Nakajima.
\newblock Design of hydrophobic surfaces for liquid droplet control.
\newblock {\em NPG Asia Mater.}, 3(5):49--56, 2011.

\bibitem{yin14}
X.~Yin, D.~Wang, Y.~Liu, B.~Yu, and F.~Zhou.
\newblock Controlling liquid movement on a surface with a macro--gradient
  structure and wetting behavior.
\newblock {\em J. Mater. Chem. A}, 2(16):5620--5624, 2014.

\bibitem{stenberg95}
R.~Stenberg.
\newblock On some techniques for approximating boundary conditions in the
  finite element method.
\newblock {\em J. Comput. Appl. Math.}, 63(1-3):139--148, 1995.

\bibitem{engelman82}
M.~S. Engelman, R.~L. Sani, and P.~M. Gresho.
\newblock The implementation of normal and/or tangential boundary conditions in
  finite element codes for incompressible fluid flow.
\newblock {\em Int. J. Numer. Methods Fluids}, 2(3):225--238, 1982.

\bibitem{hansbo15}
P.~Hansbo, M.~G. Larson, and S.~Zahedi.
\newblock Stabilized finite element approximation of the mean curvature vector
  on closed surfaces.
\newblock {\em SIAM J. Numer. Anal.}, 53(4):1806--1832, 2015.

\bibitem{boffi13}
D.~Boffi, F.~Brezzi, and M.~Fortin.
\newblock Mixed finite element methods and applications.
\newblock {\em Heidelberg: Springer}, 44:xiv--685, 2013.

\bibitem{soulaimani98}
A.~Soulaimani and Y.~Saad.
\newblock An arbitrary lagrangian-eulerian finite element method for solving
  three-dimensional free surface flows.
\newblock {\em Comput. Methods Appl. Mech. Engrg.}, 162(1-4):79--106, 1998.

\bibitem{hecht12}
F.~Hecht.
\newblock New development in freefem++.
\newblock {\em J. Numer. Math.}, 20(3--4):251--265, 2012.

\bibitem{mahadevan99}
L.~Mahadevan and Y.~Pomeau.
\newblock Rolling droplets.
\newblock {\em Phys. Fluids}, 11(9):2449--2453, 1999.

\bibitem{rothstein10}
J.~P. Rothstein.
\newblock Slip on superhydrophobic surfaces.
\newblock {\em Annu. Rev. Fluid Mech.}, 42:89--109, 2010.

\bibitem{cunjing18}
L.~Cunjing and S.~Shi.
\newblock Wetting states of two-dimensional drops under gravity.
\newblock {\em Phys. Rev. E}, 98(4):042802, 2018.

\bibitem{mistura17}
G.~Mistura and M.~Pierno.
\newblock Drop mobility on chemically heterogeneous and lubricant--impregnated
  surfaces.
\newblock {\em Adv. Phys.: X}, 2(3):591--607, 2017.

\bibitem{dapogny14}
C.~Dapogny, C.~Dobrzynski, and P.~Frey.
\newblock Three-dimensional adaptive domain remeshing, implicit domain meshing,
  and applications to free and moving boundary problems.
\newblock {\em J. Comput. Phys.}, 262:358--378, 2014.

\bibitem{gurtin81}
M.~E. Gurtin.
\newblock {\em An Introduction to Continuum Mechanics}.
\newblock Academic Press, 1981.

\bibitem{dziuk07}
G.~Dziuk and C.~M. Elliott.
\newblock Finite elements on evolving surfaces.
\newblock {\em IMA J. Numer. Anal.}, 27(2):262--292, 2007.

\bibitem{murdoch05}
A.~I. Murdoch.
\newblock Some fundamental aspects of surface modelling.
\newblock {\em J. Elast.}, 80(1):33--52, 2005.

\bibitem{murdoch90}
A.~I. Murdoch.
\newblock A coordinate-free approach to surface kinematics.
\newblock {\em Glasgow Math. J.}, 32(3):299--307, 1990.

\bibitem{gurtin02}
M.~E. Gurtin and M.~E. Jabbour.
\newblock Interface evolution in three dimensions with curvature-dependent
  energy and surface diffusion: Interface-controled evolution, phase
  transitions, epitaxial growth of elastic films.
\newblock {\em Arch. Ration. Mech. Anal.}, 163(3):171--208, 2002.

\end{thebibliography}
\end{document}